\documentclass[11pt]{amsart}

\title[mapping class groups]{The mapping class groups of reducible Heegaard splittings of genus two}

\author{Sangbum Cho}
\thanks{The first-named author is supported by Basic Science Research Program through the National Research Foundation of Korea (NRF) funded by the Ministry of Science, ICT and Future Planning (NRF-2015R1A1A1A05001071).}
\address{Department of Mathematics Education  \newline
\indent Hanyang University, Seoul 133-791, Korea}
\email{scho@hanyang.ac.kr}

\author{Yuya Koda}
\thanks{The second-named author is supported in part
by the Grant-in-Aid for Young Scientists (B), JSPS KAKENHI Grant Number 26800028.}

\address{
Department of Mathematics \newline
\indent Hiroshima University, 1-3-1 Kagamiyama, Higashi-Hiroshima, 739-8526, Japan}
\email{ykoda@hiroshima-u.ac.jp}

\subjclass[2000]{Primary 57N10; 57M60.}

\date{\today}


\usepackage{amsfonts,amsmath,amssymb,amscd}

\usepackage{amsthm}
\usepackage{latexsym}
\usepackage[dvips]{graphicx}
\usepackage[dvips]{psfrag}
\usepackage[dvips]{color}
\usepackage{xypic}
\usepackage[abs]{overpic}
\usepackage{caption}
\usepackage[all]{xy}
\usepackage{xypic}
\usepackage[all]{xy}

\theoremstyle{plain}
\newtheorem*{theorem*}{Theorem}
\newtheorem*{lemma*} {Lemma}
\newtheorem*{corollary*} {Corollary}
\newtheorem*{proposition*}{Proposition}
\newtheorem*{conjecture*}{Conjecture}
\newtheorem{theorem}{Theorem}[section]
\newtheorem{lemma}[theorem]{Lemma}

\theoremstyle{remark}

\theoremstyle{definition}

\textwidth=5.8in
\voffset=0.25in
\oddsidemargin.25in
\evensidemargin.25in
\marginparwidth=.85in

\newcommand{\Natural}{\mathbb{N}}
\newcommand{\Integer}{\mathbb{Z}}
\newcommand{\Rational}{\mathbb{Q}}

\newcommand{\MCG}{\mathrm{MCG}}

\newcommand{\Diff}{\mathrm{Diff}}

\newcommand{\Nbd}{\operatorname{Nbd}}
\newcommand{\cl}{\operatorname{cl}}
\newcommand{\TT}{\mathcal{T}^{\mathcal{T}}
}

\makeatletter

\makeatother



\begin{document}
\maketitle

\begin{abstract}
The manifold which admits a genus-$2$ reducible Heegaard splitting is one of the $3$-sphere, $\mathbb{S}^2 \times \mathbb{S}^1$, lens spaces and their connected sums.
For each of those manifolds except most lens spaces, the mapping class group of the genus-$2$ splitting was shown to be finitely presented.
In this work, we study the remaining generic lens spaces, and show that the mapping class group of the genus-$2$ Heegaard splitting is finitely presented for any lens space by giving its explicit presentation.
As an application, we show that the fundamental groups of the spaces of the genus-$2$ Heegaard splittings of lens spaces are all finitely presented.
\end{abstract}




%

\section*{Introduction}

It is well known that every closed orientable $3$-manifold $M$ can be decomposed into two handlebodies $V$ and $W$ of the same genus $g$ for some $g \geq 0$.
That is, $V \cup W = M$ and $V \cap W = \partial V = \partial W = \Sigma$, a genus-$g$ closed orientable surface.
We call such a decomposition a {\it Heegaard splitting} for the manifold $M$ and denote it by $(V, W; \Sigma)$.
The surface $\Sigma$ is called the {\it Heegaard surface} of the splitting, and the genus of $\Sigma$ is called the {\it genus} of the splitting.
The $3$-sphere admits a Heegaard splitting of each genus $g \geq 0$, and a lens space a Heegaard splitting of each genus $g \geq 1$.

The {\it mapping class group of a Heegaard splitting} for a manifold is the group of isotopy classes of orientation-preserving diffeomorphisms of the manifold that preserve each of the two handlebodies of the splitting setwise.
We call such a group the {\it Goeritz group} of the splitting, or a {\it genus-$g$ Goeritz group} when the splitting has genus $g$ as well.
Further, when a manifold admits a unique Heegaard splitting of genus-$g$ up to isotopy, we call the Goeritz group of the splitting simply the {\it genus-$g$ Goeritz group} of the manifold without mentioning a specific splitting.
We note that the mapping class group of a Heegaard splitting is a subgroup of the mapping class group of the Heegaard surface.

It is important to understand the structure of a Goeritz group,
in particular, a finite generating set or a finite presentation of it if any.
For example, using a finite presentation of the genus-$2$ Goeritz group of the 3-sphere given in \cite{Ge}, \cite{Sc}, \cite{Ak} and \cite{C},
it is constructed a new theory on the collection of the tunnel number-$1$ knots in \cite{CM09}.
Further, it has been an open problem whether the fundamental groups of the spaces of genus-$2$ Heegaard splittings of lens spaces
are finitely generated/presented or not, see \cite{JM13}.
If the genus-$2$ Goeritz groups are shown to be finitely presented, then so are those fundamental groups.

In \cite{CK14} a finite presentation of the genus-$2$ Goeritz group of $\mathbb S^2 \times \mathbb S^1$ was obtained, and in \cite{CK15a} finite presentations of the genus-$2$ Goeritz groups were obtained for the connected sums whose summands are $\mathbb S^2 \times \mathbb S^1$ or lens spaces.
We refer the reader to \cite{Joh10}, \cite{Joh11}, \cite{Sc13}, \cite{Kod} and \cite{CKA}
for finite presentations or finite generating sets of the Goeritz groups of several Heegaard splittings and related topics.

For the genus-$2$ Goeritz groups of lens spaces, finite presentations are obtained only for a small class of lens spaces in \cite{C2} and \cite{CK15b}.
That is, for the lens spaces $L(p, q)$, $1\leq q \leq p/2$, under the condition $p \equiv \pm 1 \pmod q$.
In this work, we study the remaining generic lens spaces, the case of $p \not\equiv \pm 1 \pmod q$.
We show that the genus-$2$ Goeritz group of each of those lens spaces is again finitely presented and obtain an explicit presentation, which is introduced in Theorem \ref{thm:presentations of the Goeritz groups for non-connected case} in Section \ref{sec:main_section}.
The manifold which admits a genus-$2$ reducible Heegaard splitting is one of the $3$-sphere, $\mathbb S^2 \times \mathbb S^1$, lens spaces and their connected sums.
Therefore, Theorem \ref{thm:presentations of the Goeritz groups for non-connected case} together with the previous results mentioned above implies the following.

\begin{theorem}
The mapping class group of each of the reducible Heegaard splittings of genus-$2$ is finitely presented.
\end{theorem}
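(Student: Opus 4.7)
The plan is to obtain this theorem as an almost immediate synthesis of the main technical result of the present paper with the finite presentations already available in the literature. First I would invoke the classification recalled in the abstract and introduction: any closed orientable $3$-manifold that admits a reducible Heegaard splitting of genus $2$ must be one of $\mathbb{S}^{3}$, $\mathbb{S}^{2}\times\mathbb{S}^{1}$, a lens space $L(p,q)$, or a connected sum whose summands lie among these. I would also appeal to the classical uniqueness (up to isotopy) of the genus-$2$ Heegaard splitting on each of these manifolds, due to Waldhausen, Bonahon--Otal and Schultens, so that the phrase ``\emph{the} genus-$2$ Goeritz group'' is well defined in every case and so that the statement ``each of the reducible Heegaard splittings of genus-$2$'' is unambiguous.

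Then the argument proceeds by a short case analysis. For $\mathbb{S}^{3}$, finite presentations of the genus-$2$ Goeritz group are given in \cite{Ge}, \cite{Sc}, \cite{Ak} and \cite{C}; for $\mathbb{S}^{2}\times\mathbb{S}^{1}$, in \cite{CK14}; for the connected sums whose summands are $\mathbb{S}^{2}\times\mathbb{S}^{1}$ or lens spaces, in \cite{CK15a}; and for the lens spaces $L(p,q)$ with $1 \leq q \leq p/2$ satisfying $p\equiv\pm 1\pmod{q}$, in \cite{C2} and \cite{CK15b}. The only remaining family consists of the generic lens spaces $L(p,q)$ with $p\not\equiv\pm 1\pmod{q}$, and an explicit finite presentation for the genus-$2$ Goeritz group in this case is precisely the content of Theorem~\ref{thm:presentations of the Goeritz groups for non-connected case}, to be established in Section~\ref{sec:main_section}. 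Assembling these inputs yields the theorem.

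The main obstacle is therefore not the case analysis above, which amounts to a bookkeeping paragraph, but rather the construction of the presentation in Theorem~\ref{thm:presentations of the Goeritz groups for non-connected case}. Accordingly I would keep the proof of the present theorem to a few sentences and defer the substantive work to the later sections.
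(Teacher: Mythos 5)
Your proposal matches the paper's argument exactly: the introduction derives the theorem by combining the classification of manifolds admitting genus-$2$ reducible splittings with the previously known presentations for $\mathbb{S}^3$, $\mathbb{S}^2\times\mathbb{S}^1$, connected sums, and the special lens spaces, deferring the remaining generic lens space case to Theorem \ref{thm:presentations of the Goeritz groups for non-connected case}. This is correct and is essentially the same approach as the paper.
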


In other words, the theorem says that the mapping class groups of genus-$2$ Heegaard splittings of Hempel distance $0$
are all finitely presented.
It is shown in \cite{Nam07} and \cite{Joh11} that the mapping class groups
are all finite for the Heegaard splittings of Hempel distance at least $4$.
The mapping class groups of the splittings of Hempel distances $2$ and $3$ still remain mysterious.
(Here note that there are no genus-$2$ splittings of Hempel distance $1$.)

To obtain a presentation of the Goeritz group, we have constructed a simply connected simplicial complex on which the group acts ``nicely'', in particular, so that the quotient of the action is a simple finite complex. And then we calculate the isotropy subgroups of each of the simplices of the quotient, and express the Goeritz group in terms of those subgroups.

For the genus-$2$ Heegaard splitting $(V, W; \Sigma)$ of a lens space $L(p, q)$ with $1\leq q \leq p/2$, we have constructed the {\it primitive disk complex}, denoted by $\mathcal P(V)$, whose vertices are defined to be the isotopy classes of the {\it primitive disks} in the handlebody $V$.
In \cite{CK15b}, the combinatorial structure of the complex $\mathcal P(V)$ are fully studied and it was shown that $\mathcal P(V)$ is simply connected, in fact contractible, under the condition $p \equiv \pm 1 \pmod q$, and is used to obtained the presentation of the Goeritz group.
In the case of $p \not\equiv \pm 1 \pmod q$, the complex $\mathcal P(V)$ is no longer simply connected. In fact, it consists of infinitely many tree components isomorphic to each other.
In the present paper, we will construct a new simplicial complex for this case, which we will call the {\it ``tree of trees''}, whose vertices are the tree components of $\mathcal P(V)$.

In Section \ref{sec:disk_complex}, it will be briefly reviewed the primitive disk complex $\mathcal P(V)$ for the genus-$2$ Heegaard splitting of each lens space.
In Section \ref{sec:tree_of_trees}, we construct the complex ``tree of trees'' for the case of $p \not\equiv \pm 1 \pmod q$ and develop some related properties that we need.
In the main section, Section \ref{sec:main_section},  the action of the Goeritz group on the tree of trees will be investigated to obtain the presentation of the group.
Right before Section \ref{sec:main_section}, the simplest example of our case $p \not\equiv \pm 1 \pmod q$, the lens space $L(12, 5)$, will be studied in detail in Section \ref{sec:first_example} as a motivating example.
In the final section, we show that the fundamental groups of the spaces of genus-$2$ Heegaard splittings of lens spaces are all finitely presented (up to the Smale Conjecture for $L(2,1)$).

We use the standard notation $L = L(p, q)$ for a lens space.
We refer \cite{Ro} to the reader.
The integer $p$ can be assumed to be positive.
It is well known that two lens spaces $L(p, q)$ and $L(p', q')$ are diffeomorphic if and only if $p = p'$ and  $q'q^{\pm 1} \equiv \pm 1 \pmod p$.
Thus, we will assume $1 \leq q \leq p/2$ for the lens space $L(p, q)$.
Note that each lens space admits a unique Heegaard splitting of each genus $g \geq 1$ up to isotopy by \cite{BO83}.
Throughout the paper, any disks in a handlebody are always assumed to be properly embedded, and their intersection is transverse and minimal up to isotopy.
In particular, if a disk $D$ intersects a disk $E$, then $D \cap E$ is a collection of pairwise disjoint arcs that are properly embedded in both $D$ and $E$.
For convenience, we will not distinguish disks (or union of disks) and diffeomorphisms from their isotopy
classes in their notation.
Finally, $\Nbd(X)$ will denote a regular neighborhood of $X$ and $\cl(X)$ the closure of $X$ for a subspace $X$ of a space, where the ambient space will always be clear from the context.

\section{The primitive disk complexes}
\label{sec:disk_complex}

\subsection{The non-separating disk complex for the genus-$2$ handlebody}

Let $V$ be a genus-$2$ handlebody.
The {\it non-separating disk complex}, denoted by $\mathcal{D}(V)$, of $V$ is a simplicial complex whose vertices are the isotopy classes of non-separating disks in $V$ such that a collection of $k+1$ vertices spans a $k$-simplex if and only if it admits a collection of representative disks which are pairwise disjoint.
We note that the disk complex $\mathcal{D}(V)$ is $2$-dimensional and every edge of $\mathcal D(V)$ is contained in infinitely but countably many $2$-simplices.
In \cite{McC}, it is proved that $\mathcal D(V)$ and the link of any vertex of $\mathcal D(V)$ are all contractible.
Thus, the complex $\mathcal D(V)$ deformation retracts to a tree in its barycentric subdivision spanned by the barycenters of the $1$-simplices and $2$-simplices, which we call the {\it dual tree} of $\mathcal{D}(V)$.
See Figure \ref{fig:disk_complex}.
We note that each component of any full subcomplex of $\mathcal{D}(V)$ is contractible.

\begin{figure}[htbp]
\begin{center}
\includegraphics[width=9cm,clip]{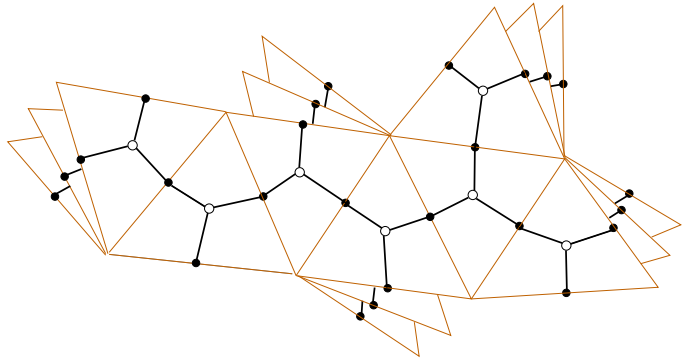}
\caption{A portion of the non-separating disk complex $\mathcal D(V)$ of a genus-$2$ handlebody $V$ with its dual tree.}
\label{fig:disk_complex}
\end{center}
\end{figure}

Let $D$ and $E$ be non-separating disks in $V$ and suppose that the vertices of the disks in $\mathcal{D}(V)$, which we denote by $D$ and $E$ again, are not adjacent to each other, that is, $D \cap E \neq \emptyset$.
In the barycentric subdivision of $\mathcal{D}(V)$, the links of the vertices $D$ and $E$ are disjoint trees.
Then there exists a unique shortest path in the dual tree of $\mathcal D(V)$ connecting the two links.
Let $v_1,$ $w_1$, $v_2$, $w_2, \ldots, w_{n-1}$, $v_n$ be the sequence of vertices of this path.
We note that each $v_i$ is trivalent while each $w_i$ has infinite valency in the dual tree.
Let $\Delta_1$, $\Delta_2, \ldots, \Delta_n$ be the 2-simplices of $\mathcal{D}(V)$ whose baricenters are the trivalent vertices $v_1$, $v_2, \ldots, v_n$ respectively.
We call the full subcomplex of $\mathcal{D}(V)$ spanned by the vertices of $\Delta_1$, $\Delta_2, \ldots, \Delta_n$
the {\it corridor} connecting $D$ and $E$ and we denote it by $\mathcal{C}_{\{ D, E \} } = \{  \Delta_1 , \Delta_2 , \ldots, \Delta_n \}$.
See Figure \ref{fig:corridor}.
Let $E_*$ and $E_{**}$ be the two vertices of $\Delta_1$ other than $E$.
We call the pair $\{ E_* , E_{**} \}$ the {\it principal pair} of $E$ with respect to $D$ for the corridor $\mathcal{C}_{\{ D, E \} }$.

\bigskip

\begin{center}
\begin{overpic}[width=12cm,clip]{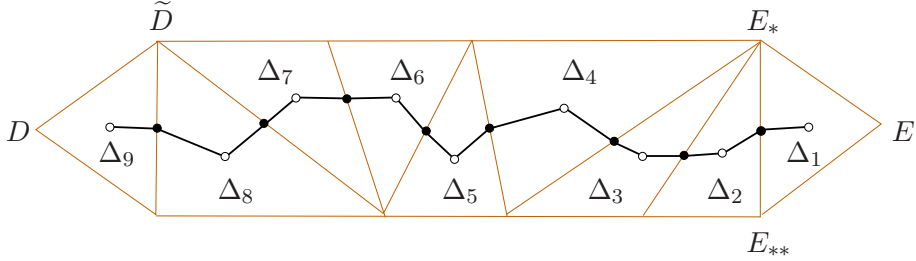}
  \linethickness{3pt}
  \put(0, 52){$D$}
  \put(54, 94){$\widetilde{D}$}
  \put(280, 94){$E_\ast$}
  \put(280, 10){$E_{\ast\ast}$}
  \put(335, 52){$E$}
  \put(295, 44){$\Delta_1$}
  \put(265, 30){$\Delta_2$}
  \put(220, 30){$\Delta_3$}
  \put(210, 75){$\Delta_4$}
  \put(165, 30){$\Delta_5$}
  \put(145, 75){$\Delta_6$}
  \put(95, 75){$\Delta_7$}
  \put(80, 30){$\Delta_8$}
  \put(35, 44){$\Delta_9$}
\end{overpic}
\captionof{figure}{The corridor connecting $D$ and $E$.}
\label{fig:corridor}
\end{center}

Let $D$ and $E$ be non-separating disks in $V$.
We assume that $D$ intersects $E$ transversely and minimally.
Let $C$ be an {\it outermost subdisk} of $D$ cut off by $D \cap E$, that is, $C$ is a disk cut off from $D$ by an arc $\alpha$ of $D \cap E$ in $D$ such that $C \cap E= \alpha$.
The arc $\alpha$ cuts $E$ into two disks, say $F_1$ and $F_2$.
Then we have two disjoint disks $E_1$ and $E_2$ which are isotopic to the disks $F_1 \cup C$ and $F_2 \cup C$ respectively.
We note that each of $E_1$ and $E_2$ is isotopic to neither $E$ nor $D$, and
each of $E_1$ and $E_2$ has fewer arcs of intersection with $D$ than $E$ had since at least the arc $\alpha$ no longer counts, as $D$ and $E$ are assumed to intersect minimally.
Further, it is easy to check that both $E_1$ and $E_2$ are non-separating, and these two disks are determined without depending on the choice of the outermost subdisk of $D$ cut off by $D \cap E$
(this is a special property of a genus-$2$ handlebody).
We call the disks $E_1$ and $E_2$ the {\it disks from surgery} on $E$ along the outermost subdisk $C$ or simply the {\it disks from surgery} on $E$ along $D$.

Let $D$, $E$ and $E_0$ be non-separating disks in $V$.
Assume that $E$ and $E_0$ are non-separating disks in $V$ which are disjoint and are not isotopic to each other, and that $D$ intersects $E \cup E_0$ transversely and minimally.
In the same way to the above, we can consider the surgery on $E \cup E_0$ along an outermost subdisk of $D$ cut off by $D \cap (E \cup E_0)$.
In fact, one of the two resulting disks from the surgery is isotopic to either $E$ or $E_0$, and the other, denoted by $E_1$, is isotopic to none of $E$ and $E_0$ (this is also a special property of a genus-$2$ handlebody).
We call the disk $E_1$ the {\it disk from surgery} on $E \cup E_0$ along $D$.
(If $D$ is already disjoint from $E \cup E_0$, then define simply $E_1$ to be $D$.)

\begin{lemma}
\label{lem:principal pair of a corridor}
Let $\mathcal{C}_{\{ D, E \} } = \{  \Delta_1 , \Delta_2 , \ldots, \Delta_n \}$ be the corridor connecting $D$ and $E$.
Then the disks of the principal pair $\{ E_*, E_{**}\}$ are exactly the disks from surgery on $E$ along $D$.
\end{lemma}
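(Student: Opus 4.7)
The plan is to show that the $2$-simplex $\Delta$ spanned by $E$ and the two surgery disks $E_1, E_2$ coincides with $\Delta_1$, so that $\{E_*, E_{**}\} = \{E_1, E_2\}$. First I would check that $\Delta$ is indeed a $2$-simplex of $\mathcal D(V)$: by construction $E_1$ and $E_2$ are disjoint from each other, disjoint from $E$, and both non-separating (the last being the special genus-$2$ property recorded in the excerpt), so $\{E, E_1, E_2\}$ spans a $2$-simplex with barycenter $v_\Delta$.

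Next I would identify $v_\Delta$ with $v_1$ using the tree structure of the dual tree of $\mathcal D(V)$. The unique shortest path from $\mathrm{link}(D)$ to $\mathrm{link}(E)$ must enter $\mathrm{link}(E)$ at a trivalent vertex, namely the barycenter of a $2$-simplex containing $E$, and must leave it along the barycenter of the edge of this $2$-simplex opposite to $E$. So it suffices to produce a path in the dual tree starting at $v_\Delta$, leaving $\mathrm{link}(E)$ along the edge-barycenter $w_{\{E_1, E_2\}}$, and reaching $\mathrm{link}(D)$; by uniqueness of reduced paths in a tree, any such path is automatically the shortest, forcing $v_\Delta = v_1$.

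To construct such a path I would induct on $|D \cap E|$. If $|D \cap E| = 1$, the outermost arc $\alpha$ is the unique arc of intersection, so both $E_1$ and $E_2$ become disjoint from $D$; then $w_{\{E_1, E_2\}}$ already lies in $\mathrm{link}(D)$ and the desired path has length one. For $|D \cap E| \geq 2$, each of $E_1, E_2$ has strictly fewer arcs of intersection with $D$ than $E$ does, so the inductive hypothesis applies to the pair $(D, E_i)$: the first $2$-simplex of the corridor $\mathcal C_{\{D, E_i\}}$ on the $E_i$-side is precisely the $2$-simplex spanned by $E_i$ together with its disks from surgery along $D$. I would then concatenate $\Delta$ with one of these corridors $\mathcal C_{\{D, E_i\}}$ to obtain a dual-tree path from $v_\Delta$ to $\mathrm{link}(D)$ and invoke tree uniqueness.

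The main obstacle will be verifying that the concatenated path is reduced, i.e., that it does not backtrack at the junction $\Delta$. Concretely, I must check that $E$ is not a vertex of the first $2$-simplex of $\mathcal C_{\{D, E_i\}}$ coming from the surgery on $E_i$; equivalently, that $E$ does not appear among the disks obtained by surgery on $E_i$ along $D$. This should follow from the strict decrease of intersection number (the surgery disks on $E_i$ meet $D$ fewer times than $E_i$ does, while $E$ meets $D$ more times than $E_i$ does) together with the fact that the surgery disks on $E_i$ are disjoint from $E_i$, whereas we must also rule out the degenerate situations where $E_i$ is already disjoint from $D$; in that case the induction terminates on that branch and the other branch $E_j$ carries the corridor forward. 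Once this combinatorial bookkeeping is settled, $v_\Delta = v_1$ follows and the lemma is proved.
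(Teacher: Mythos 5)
Your route is genuinely different from the paper's: you fix $D$ and induct on $|D\cap E|$ by replacing $E$ with a surgery disk $E_i$ and splicing dual-tree paths, whereas the paper fixes $E$ and inducts on the length $n$ of the corridor, replacing $D$ by a vertex $\widetilde D$ of $\Delta_n$ disjoint from $D$ and using the fact that surgery on $E$ along $D$ and along $\widetilde D$ yield the same pair of disks. (The paper's base case $n=2$ is exactly your observation that the surgery disks, being disjoint from $E$ and from $E_*\cup E_{**}$, must coincide with $\{E_*,E_{**}\}$, since a genus-$2$ handlebody carries at most three pairwise disjoint, pairwise non-isotopic non-separating disks.) Your approach could be made to work, but as written it has a gap that is more serious than the obstacle you flag.

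The gap is in the concatenation step. For ``concatenate $\Delta$ with $\mathcal{C}_{\{D,E_i\}}$'' to produce an edge-path in the dual tree at all, the $2$-simplex $\Delta=\{E,E_1,E_2\}$ and the first $2$-simplex $\Delta_1^{(i)}=\{E_i,G_1,G_2\}$ of $\mathcal{C}_{\{D,E_i\}}$ must share an \emph{edge}, not merely the vertex $E_i$; and since your path is required to exit the link of $E$ through $w_{\{E_1,E_2\}}$, that shared edge must be $\{E_1,E_2\}$. Equivalently, you must prove the positive statement that the \emph{other} surgery disk $E_j$ is one of the two disks obtained from surgery on $E_i$ along $D$. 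This is not automatic, and in fact it typically holds for only one of the two choices of $i$: the corridor can ``turn'' at $\Delta_2$, so that for the wrong choice the first $2$-simplex of $\mathcal{C}_{\{D,E_i\}}$ sits several triangles further along and meets $\Delta$ only in the vertex $E_i$, leaving you with no path to splice. The condition you single out as the main obstacle --- that $E$ is not among the surgery disks of $E_i$ --- is the easy part (it does follow from the strict decrease of the number of arcs of intersection with $D$), but it only rules out backtracking \emph{once the edge-sharing is known}. The missing ingredient can be supplied: take an outermost subdisk of $D$ cut off by $D\cap(E_1\cup E_2)$; its arc lies on one of the two disks, say $E_1$, it is then also an outermost subdisk of $D$ cut off by $D\cap E_1$, and the two resulting surgery disks are disjoint from $E_1\cup E_2$, so one of them must be isotopic to $E_2$ by the three-disk bound. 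Without this step (and the resulting selection of the correct $i$), the tree-uniqueness argument has nothing to apply to.
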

\begin{proof}
We use the induction on the number $n \geq 2$ of the $2$-simplices of the corridor.
If $n = 2$, the conclusion holds immediately since each of $D$ and $E$ is disjoint from $E_* \cup E_{**}$, and so any outermost subdisk of $D$ cut off by $D \cap E$ is also disjoint from  $E_* \cup E_{**}$.
If $n \geq 3$, choose a vertex, say $\widetilde{D}$, of $\Delta_n$ other than $D$ that is not adjacent to $E$.
Then we have the (sub)corridor  $\mathcal{C}_{\{ \widetilde{D}, E \} } = \{  \Delta_1 , \Delta_2 , \ldots, \Delta_k \}$ connecting $\widetilde{D}$ and $E$ for some $k < n$.
See Figure \ref{fig:corridor}.
By the assumption of the induction, the disks $E_*$ and $E_{**}$ are exactly the disks from surgery on $E$ along $\widetilde{D}$.
Since $D$ is disjoint from $\widetilde{D}$, the disks from surgery on $E$ along $D$ are the same to those from surgery on $E$ along $\widetilde{D}$.
\end{proof}

Consider any two consecutive $2$-simplices $\Delta_k$ and $\Delta_{k+1}$, $k \in \{1, 2, \ldots, n-1\}$, of a corridor $\mathcal{C}_{\{ D, E \} } = \{  \Delta_1 , \Delta_2 , \ldots, \Delta_n \}$ connecting $D$ and $E$.
When we write $\Delta_k = \{D_0, D_1, D_2\}$ and $\Delta_{k+1} = \{D_1, D_2, D_3\}$ as triples of vertices, we see that $\{D_1, D_2\}$ are the principal pair of $D_0$ with respect to $D$ for the (sub)corridor $\mathcal{C}_{\{ D, D_0 \} } = \{  \Delta_k , \Delta_{k+1} , \ldots, \Delta_n \}$.
Further, the followings are immediate from Lemma \ref{lem:principal pair of a corridor}.
\begin{itemize}
\item $D_1$ and $D_2$ are the disks from surgery on $D_0$ along $D$.
\item $D_3$ is the disk from surgery on $D_1 \cup D_2$ along $D$.
\end{itemize}
This observation implies the following lemma.

\begin{lemma}
\label{lem:two corridors}
Let $\mathcal{C}_{\{ D, E \} } = \{  \Delta_1 , \Delta_2 , \ldots, \Delta_n \}$ be the corridor connecting $D$ and $E$.
For each $k \in \{1, 2, \ldots, n-1\}$, we write the edge $\Delta_k \cap \Delta_{k+1} = \{D_k, D'_k\}$ and the $2$-simplex $\Delta_{k+1} = \{D_{k+1}, D_k, D'_k\}$.
Let $F$ be a vertex that is not adjacent to $E$.
If $D_{k+1}$ is the disk from surgery on $D_k \cup D'_k$ along $F$ for each $k$,
then the corridor $\mathcal{C}_{\{ F, E \} }$ connecting $F$ and $E$ contains the corridor $\mathcal{C}_{\{ D, E \} }$.
\end{lemma}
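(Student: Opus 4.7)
The plan is to show, by induction on $k = 1, 2, \ldots, n$, that the first $k$ $2$-simplices of the corridor $\mathcal{C}_{\{F, E\}}$ coincide with $\Delta_1, \Delta_2, \ldots, \Delta_k$; setting $k = n$ then gives the desired containment $\mathcal{C}_{\{D, E\}} \subseteq \mathcal{C}_{\{F, E\}}$.

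For the inductive step, assume the first $k$ $2$-simplices of $\mathcal{C}_{\{F, E\}}$ are $\Delta_1, \ldots, \Delta_k$. Applying the observation preceding the lemma to $\mathcal{C}_{\{F, E\}}$, the new vertex of its $(k+1)$-st $2$-simplex is the disk obtained by surgery on the corresponding exit edge of $\Delta_k$ along $F$. The hypothesis says that surgery on $\{D_k, D'_k\}$ along $F$ yields $D_{k+1}$, a disk distinct from the three vertices of $\Delta_k$; since the exit edge in the dual tree is characterized by producing a genuinely new disk via surgery, this forces the $(k+1)$-st $2$-simplex of $\mathcal{C}_{\{F, E\}}$ to be $\{D_k, D'_k, D_{k+1}\} = \Delta_{k+1}$.

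The delicate point, and what I expect to be the main obstacle, is the base case $k = 1$. By Lemma \ref{lem:principal pair of a corridor}, the first $2$-simplex of $\mathcal{C}_{\{F, E\}}$ is spanned by $E$ together with the pair of disks from surgery on $E$ along $F$, so I must show that this pair equals $\{D_1, D'_1\}$. My approach would be geometric: by Lemma \ref{lem:principal pair of a corridor} applied to $\mathcal{C}_{\{D, E\}}$, the disks $D_1$ and $D'_1$ arise from $E$ by cutting along an outermost arc of $D \cap E$ on $E$ and capping with the corresponding outermost subdisk $C_D$ of $D$, while the hypothesis at $k = 1$ produces $D_2$ from $D_1 \cup D'_1$ using an outermost subdisk $C_F$ of $F$ cut off by $F \cap (D_1 \cup D'_1)$. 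I would minimize intersections and track the arc $\partial C_F \cap (D_1 \cup D'_1)$ back to an arc of $F \cap E$ by collapsing $C_D$; the resulting arc should realize $C_F$ as an outermost subdisk of $F$ cut off by $F \cap E$, whence the disks from surgery on $E$ along $F$ are forced to be $\{D_1, D'_1\}$. The hypothesis gives direct control over how $F$ surgeres each edge $\{D_k, D'_k\}$ for $k \geq 1$ but says nothing directly about $F \cap E$, so extracting this geometric identification from the compatibility data at $k = 1$ is what drives the entire proof; once established, the inductive step follows formally from Lemma \ref{lem:principal pair of a corridor} and the uniqueness of the shortest dual-tree path defining a corridor.
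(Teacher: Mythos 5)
Your inductive skeleton is the natural one, and for what it is worth the paper gives no proof of this lemma at all --- it is simply asserted to be implied by the observation preceding it --- so any honest write-up has to supply exactly the details you are wrestling with. Still, there are two genuine gaps in your proposal. First, the inductive step is not formal. For $k \geq 2$ the simplex $\Delta_k$ has \emph{two} non-entrance edges, both containing the new vertex $D_k$, and a priori $\mathcal{C}_{\{F,E\}}$ could exit through either one. The observation only tells you that the actual exit edge produces, under surgery along $F$, a disk not in $\Delta_k$; it does not say this is the \emph{only} edge of $\Delta_k$ with that property. So the hypothesis that surgery on $D_k \cup D'_k$ along $F$ yields the new disk $D_{k+1}$ does not by itself identify $\{D_k, D'_k\}$ as the exit edge. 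Second, the base-case sketch does not cohere as written: $C_D$ is a subdisk of $D$, which plays no role in the hypothesis on $F$, and an outermost subdisk of $F$ cut off by $F \cap (D_1 \cup D'_1)$ has no reason to be, or to ``collapse to,'' an outermost subdisk of $F$ cut off by $F \cap E$; the two surgeries are performed on different disks and there is no direct passage between them.

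Both gaps are closed by a single fact, which is the real content hiding behind the observation: if $Y$ is the disk from surgery on an edge $\{A,B\}$ along $F$, then the link of $F$ in the dual tree lies in the component of the dual tree minus the barycenter of $\{A,B\}$ that contains the barycenter of the $2$-simplex $\{A,B,Y\}$. Granting this, the hypothesis at each $k$ places the link of $F$ on the $\Delta_{k+1}$-side of the barycenter $w_k$ of $\Delta_k \cap \Delta_{k+1}$, while the link of $E$ (a connected subtree containing the barycenter of $\Delta_1$ and missing every $w_k$) lies on the $\Delta_k$-side; hence the dual path from the link of $E$ to the link of $F$ crosses every $w_k$, enters the link of $E$ through its unique gate toward $w_1$ (the barycenter of $\Delta_1$, since $\Delta_1$ is the only $2$-simplex containing both $E$ and the edge $\{D_1,D'_1\}$), and leaves $w_{n-1}$ through its unique neighbor on the far side (the barycenter of $\Delta_n$). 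This yields the containment for base case and inductive step simultaneously, with the only degenerate case being $F$ disjoint from $D_{n-1} \cup D'_{n-1}$, which forces $F = D$. Without this sidedness statement, or an equivalent surgery argument supplying the uniqueness of the ``new-disk-producing'' edge, neither your base case nor your inductive step goes through as described.
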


\subsection{The primitive disk complexes for lens spaces}

Let $(V, W; \Sigma)$ be the genus-$2$ Heegaard splitting of a lens space $L = L(p, q)$.
A disk $E$ properly embedded in $V$ is said to be {\it primitive} if there exists a disk $E'$ properly embedded in $W$ such that the two loops $\partial E$ and $\partial E' $ intersect transversely in a single point.
Such a disk $E'$ is called a {\it dual disk} of $E$, which is also primitive in $W$ having a dual disk $E$.
Note that both $W \cup \Nbd(E)$ and $V \cup \Nbd(E')$ are solid tori.
Primitive disks are necessarily non-separating.

The {\it primitive disk complex} $\mathcal P(V)$ for the genus-$2$ splitting $(V, W; \Sigma)$
is defined to be the full subcomplex of $\mathcal D(V)$ spanned by the primitive disks in $V$.
If a genus-$2$ Heegaard splitting admits primitive disks, then the manifold is one of the $3$-sphere, $\mathbb S^2 \times \mathbb S^1$ or a lens space, and so we can define the primitive disk complex for each of those manifolds.
The combinatorial structure of the primitive disk complexes for each of the $3$-sphere and $\mathbb S^2 \times \mathbb S^1$ has been well understood in \cite{C} and \cite{CK14}.
For the lens spaces, we have the following results from \cite{CK15b}.

\begin{theorem}[Theorems 4.2 and 4.5 in \cite{CK15b}]
\label{thm:contractibility}
For a lens space $L(p, q)$ with $1 \leq q \leq p/2$,
the primitive disk complex $\mathcal P(V)$ for
the genus-$2$ Heegaard splitting $(V, W; \Sigma)$ of $L(p, q)$ is contractible if and only if $p \equiv \pm 1 \pmod{q}$.
If $p \not\equiv \pm 1 \pmod q$, then $\mathcal P(V)$ is not connected and consists of infinitely many tree components.
\end{theorem}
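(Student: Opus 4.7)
The plan is to study the primitive disk complex $\mathcal P(V)$ via the surgery operation on non-separating disks, combined with a slope analysis on the Heegaard surface $\Sigma$. I would first fix a primitive disk $D_0 \subset V$ together with a dual disk $D_0' \subset W$. Cutting $V$ along $D_0$ yields a solid torus $V'$, and attaching the $2$-handle $\Nbd(D_0')$ to $V$ realizes the $L(p,q)$ structure with a specific $(p,q)$-framing on $\partial V'$. Any non-separating disk $D \subset V$ disjoint from $D_0$ is encoded by its boundary as an essential simple closed curve on the cut surface, and its primitivity translates into an arithmetic condition on the slope of $\partial D$ relative to this framing. This setup yields a classification of primitive disks disjoint from $D_0$.

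The technical heart is a surgery lemma: for primitive disks $D, E \subset V$ meeting transversely and minimally, let $E_1, E_2$ be the two disks obtained from surgery on $E$ along an outermost subdisk of $D$. I would show that at least one of $E_1, E_2$ is primitive if and only if $p \equiv \pm 1 \pmod q$, by computing the boundary slopes of $E_1, E_2$ on $\Sigma$ via the classification above and checking the primitivity criterion. The congruence $p \equiv \pm 1 \pmod q$ should emerge naturally from this slope arithmetic.

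Granted this lemma, when $p \equiv \pm 1 \pmod q$ the connectedness of $\mathcal P(V)$ follows by induction on $|D \cap E|$, since a primitive surgery disk has strictly fewer intersections with $D$ than $E$ did; contractibility is then deduced from connectedness together with the contractibility of $\mathcal D(V)$ and of the vertex links, by a standard argument promoting connectedness to simple connectedness and then to contractibility. When $p \not\equiv \pm 1 \pmod q$, the same surgery lemma shows that no $2$-simplex of $\mathcal D(V)$ has all three vertices primitive, since the third vertex of such a $2$-simplex appears as a surgery disk and hence is non-primitive. Thus $\mathcal P(V)$ is at most $1$-dimensional, and the corridor connecting two intersecting primitive disks passes only through non-primitive interior vertices, so such disks lie in distinct components. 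To show each component is a tree, I would argue that any cycle in a component would bound a simplicial disk in the contractible $\mathcal D(V)$; a careful analysis of this disk's boundary via the surgery lemma should produce a $2$-simplex with three primitive vertices, contradicting the step above. Infinitely many components appear because the Goeritz group action on $V$ sends primitive disks to primitive disks while permuting components nontrivially (and the Goeritz group itself is infinite for these lens spaces).

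The main obstacle I anticipate is the surgery lemma, which demands careful bookkeeping of boundary slopes on $\Sigma$ and a clean emergence of the congruence $p \equiv \pm 1 \pmod q$ from the slope computation; a wrong choice of coordinates or of the outermost arc could obscure the arithmetic. A secondary difficulty is ruling out cycles inside components when $p \not\equiv \pm 1 \pmod q$, since the contractibility of $\mathcal D(V)$ is only an ambient fact and the filling disks can exit the primitive subcomplex, so the contradiction must be extracted from the local combinatorics at primitive vertices along the cycle rather than by a direct homotopy argument inside $\mathcal P(V)$.
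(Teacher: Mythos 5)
Your overall strategy (encode primitivity as a word/slope condition via a fixed dual pair, analyze the disks produced by outermost-arc surgery, and exploit the tree structure of $\mathcal D(V)$) is the same one used in the cited source \cite{CK15b}, whose machinery this paper reproduces in the form of shells, the principal-pair analysis, and the bridge construction. However, the proposal has a genuine gap at its core: the ``surgery lemma'' is stated as an equivalence, and the ``only if'' direction is false. When $p \not\equiv \pm 1 \pmod q$ it is \emph{not} true that surgery on a primitive disk $E$ along an intersecting primitive disk $D$ always produces non-primitive disks: if $D$ and $E$ lie at distance two in the same tree component, say both disjoint from a primitive $F$, then an outermost subdisk of $D$ is disjoint from $F$, and since at most three pairwise disjoint non-isotopic non-separating disks fit in a genus-$2$ handlebody, one of the two surgery disks must be isotopic to $F$ and hence primitive. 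The correct statement is a per-pair trichotomy: the surgery pair is $\{E_j,E_{j+1}\}$ in some shell, and either $j\in\{1,q'-1,q'\}$ (a surgery disk is primitive) or $j=m$ with $p=qm+r$, $2\le r\le q-2$ (neither is primitive). Consequently your deduction that ``intersecting primitive disks lie in distinct components'' cannot be right -- it would force every component to be a single vertex, contradicting the fact that each component is an infinite tree. Disconnectedness instead requires exhibiting a \emph{specific} pair whose entire corridor avoids primitive interior vertices; this is the bridge construction (Lemma \ref{lem:construction of a bridge} here), which is the hard arithmetic step your outline omits.

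Two further steps need repair. First, your claim that no $2$-simplex of $\mathcal D(V)$ has three primitive vertices ``since the third vertex appears as a surgery disk'' does not parse: three pairwise disjoint disks involve no surgery, and one must instead use the shell classification (Lemma \ref{lem:sequence}) of the disks disjoint from a primitive pair, namely $E_0$ and $E_2$, and check when $E_2$ can be primitive. Second, the argument for infinitely many components via an infinite Goeritz group acting nontrivially is circular at this stage (the group's structure is what the complex is built to compute) and in any case an infinite group can permute finitely many components; the correct count comes from the infinitely many shells, hence bridges, based at a single primitive disk. The contractibility step in the $p\equiv\pm1\pmod q$ case is also under-specified -- connectedness does not upgrade to contractibility for an arbitrary full subcomplex -- but this is repairable via the standard surgery criterion once the (corrected) surgery lemma is in hand.
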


In the case of $p \not\equiv \pm 1 \pmod q$, each vertex of any tree component of $\mathcal P(V)$ has infinite valency, that is, for each primitive disk $D$ in $V$ there exist infinitely many non-isotopic primitive disks disjoint from $D$.
Thus, all the tree components of $\mathcal P(V)$ are isomorphic to each other.

\section{The tree of trees}
\label{sec:tree_of_trees}

\subsection{The primitive disks}
Let $(V, W; \Sigma)$ be the genus-$2$ Heegaard splitting of a lens space $L = L(p, q)$.
In this subsection, we will develop several properties of the primitive disks in $V$ and $W$ we need, in particular, some sufficient  conditions for the non-primitiveness.
Each simple closed curve on the boundary of the genus-$2$ handlebody $W$ represents an element of
the free group $\pi_1 (W)$ of rank 2.
The following is a well known fact.

\begin{lemma}[Gordon \cite{Go}]
Let $D$ be a non-separating disk in $V$.
Then $D$ is primitive if and only if $\partial D$ represents a primitive element of $\pi_1 (W)$.
\label{lem:primitive_element}
\end{lemma}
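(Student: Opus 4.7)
The plan is to prove the two implications separately, exploiting the isomorphism $\pi_1(W) \cong F_2$, the free group of rank $2$, since $W$ is a genus-$2$ handlebody.

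For the forward direction, assume $D$ is primitive with dual disk $D' \subset W$ meeting $\partial D$ transversely once. Since $\partial D$ is connected and crosses $\partial D'$ once, $D'$ must be non-separating in $W$, so I can extend it to a complete meridian system $\{D', A\}$ cutting $W$ into a $3$-ball. The dual cores through the ball give a free basis $\{x_1, x_2\}$ of $\pi_1(W)$, where $x_i$ is represented by a loop meeting only the corresponding disk once. After arranging $\partial D$ to intersect $\partial A$ minimally while keeping the single intersection with $\partial D'$, reading off the word in $\{x_1, x_2\}$ representing $[\partial D]$ yields exactly one occurrence of $x_1^{\pm 1}$, so $[\partial D] = x_2^{a}\, x_1^{\pm 1}\, x_2^{b}$ for some integers $a, b$. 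Such an element is primitive in $F_2$, since $\{x_2^{a} x_1^{\pm 1} x_2^{b},\, x_2\}$ is again a free basis via a Nielsen transformation.

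For the converse, suppose $[\partial D]$ is primitive in $\pi_1(W)$. The key ingredient I would use is the classical fact that, for the genus-$2$ handlebody $W$, the handlebody mapping class group surjects onto the outer automorphism group of $\pi_1(W) \cong F_2$ (Nielsen, for rank $2$). Fix a cut system $\{D_0, A_0\}$ of $W$, let $\{x_1, x_2\}$ be the associated basis, and let $\gamma_0 \subset \Sigma$ be the standard simple closed curve meeting $\partial D_0$ once and disjoint from $\partial A_0$, so $[\gamma_0] = x_1$. Extend $[\partial D]$ to a free basis $\{[\partial D], y\}$ and let $\varphi \in \Aut(\pi_1(W))$ send $x_1 \mapsto [\partial D]$, $x_2 \mapsto y$; realize $\varphi$ by a self-homeomorphism $h$ of $W$. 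Then $h(D_0)$ is a meridian disk of $W$ whose boundary $h(\gamma_0)$ represents $[\partial D]$ in $\pi_1(W)$. It suffices to isotope $h(\gamma_0)$ to $\partial D$ on $\Sigma$, which would transport $h(D_0)$ to the sought-after dual disk $D'$.

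The principal obstacle is precisely this last geometric step: simple closed curves on $\Sigma$ representing the same primitive conjugacy class of $\pi_1(W)$ need not be isotopic in $\Sigma$, so the isotopy from $h(\gamma_0)$ to $\partial D$ is not automatic. To overcome this, I would replace the direct isotopy argument with a Haken-style disk-exchange: starting from the disk $D' = h(D_0)$, if $|\partial D \cap \partial D'| > 1$, take an outermost arc of $\partial D \cap \partial D'$ on $\partial D$ and perform a band sum/surgery on $D'$ across the resulting outermost subdisk. One of the two resulting disks in $W$ has strictly fewer intersections with $\partial D$ and still represents the trivial element of $\pi_1(W)$ (being a properly embedded disk); an exponent-sum bookkeeping in the basis $\{[\partial D], y\}$ ensures that the surgery preserves the algebraic intersection number $\pm 1$ with $\partial D$, so one of the two disks inherits an odd, hence non-trivial, intersection count. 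Iterating until the intersection is minimal forces geometric intersection $1$, producing the required dual disk and completing the proof.
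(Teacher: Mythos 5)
The paper does not prove this lemma; it is quoted from Gordon's paper \cite{Go}, where the hard direction is established by showing that attaching a $2$-handle to $W$ along a curve representing a primitive element yields a solid torus (this uses the Loop Theorem) and then producing the dual disk by handle cancellation. Your forward direction is correct and is the standard easy argument: with respect to a meridian system $\{D',A\}$ extending the dual disk, the class of $\partial D$ is $x_2^a x_1^{\pm1}x_2^b$, which is visibly part of a basis.

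The converse, however, has a genuine gap at exactly the step you flag and then try to repair. The repair does not work: the ``outermost arc / outermost subdisk'' surgery you invoke is not available here, because $D\subset V$ and $D'\subset W$ lie in \emph{complementary} handlebodies, so $D\cap D'=\partial D\cap\partial D'$ is a finite set of points on $\Sigma$ --- there are no arcs of intersection, no outermost subdisk of $D$ on the $W$-side, and no way to band-sum $D'$ across a piece of $D$ while staying inside $W$. (Outermost-arc surgery applies to two disks in the \emph{same} handlebody, as in the paper's surgery constructions in Section 1.) If instead one only isotopes $\partial D'$ on $\Sigma$, one can never get below the minimal geometric intersection number of the two isotopy classes on $\Sigma$, which need not be $1$ even when the algebraic intersection number is $\pm1$; and your terminating claim --- that odd algebraic intersection plus irreducibility forces geometric intersection $1$ --- is also unjustified (three intersection points of the same sign already defeat it). The correct route is either Gordon's handle-cancellation argument, or a Whitehead/Nielsen reduction in which one modifies the \emph{meridian system of $W$} by disk surgeries among disks of $W$ (reducing the length of the word carried by $\partial D$) until $\partial D$ meets one meridian once and the other not at all; modifying $D'$ by surgeries along $D$ is not a legal move.
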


Here, an element of a free group is said to be {\it primitive} if
it is a member of a generating set.
Primitive elements of the rank-$2$ free group has been well understood.
In particular, we have the following property.

\begin{lemma}[Osborne-Zieschang \cite{OZ}]
Given a generating pair $\{x, y\}$ of the free group $\mathbb Z \ast \mathbb Z$ of rank $2$,
a cyclically reduced form of any primitive element can be written as a product of terms each of the form
$x^\epsilon y^n$ or $x^\epsilon y^{n+1}$, or else a product of terms each of the form $y^\epsilon x^n$ or $y^\epsilon x^{n+1}$, for some $\epsilon \in \{1,-1\}$ and some $n \in \mathbb Z$.
\label{lem:property of primitive elements}
\end{lemma}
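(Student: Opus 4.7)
The plan is to reduce the statement to a combinatorial construction carried out by the Euclidean algorithm on the abelianization, using the classical theorem (due to Nielsen) that two primitive elements of the rank-$2$ free group $F_2$ sharing the same primitive abelianization in $\mathbb{Z}^2$ are conjugate up to inversion. Since the cyclically reduced form is an invariant of the conjugacy class (up to cyclic rotation), it is enough to exhibit, for each primitive vector $(a,b) \in \mathbb{Z}_{\geq 0}^{2}$, one primitive element of $F_2$ realizing that abelianization whose cyclically reduced form has the advertised two-syllable structure; the general case then follows by composing with the sign-changing automorphisms $x \mapsto x^{\pm 1}$, $y \mapsto y^{\pm 1}$.

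I would then carry out the construction by induction on $a + b$. The base cases $(1,0)$ and $(0,1)$ are the generators themselves. For the inductive step with $a \geq b \geq 1$ and $\gcd(a,b)=1$, write $a = qb + r$ with $0 \leq r < b$ and $q \geq 1$, and consider the Nielsen automorphism $\phi \colon (x,y) \mapsto (x, y x^{-q})$. By the inductive hypothesis applied to the pair $(r, b)$, there is a primitive word $w_{0}$ in the claimed form representing that abelianization class. Applying $\phi^{-1}$, that is, substituting $y \mapsto y x^{q}$ into $w_{0}$, produces a word $w$ with abelianization $(a, b)$, which is primitive as the image of a primitive element under an automorphism. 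A direct syllable-by-syllable computation then shows that the substitution sends each factor $y\, x^{m_{i}}$ of $w_{0}$ to $y\, x^{q + m_{i}}$ in $w$, so that the $x$-exponents of $w$ lie in the width-one window $\{q + m,\, q + m + 1\}$, while the $y$-syllables retain exponent $1$; this is precisely the claimed form, with the roles of $x$ and $y$ swapped relative to $w_{0}$. The symmetric case $a < b$ produces the other form.

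The main obstacle I expect is controlling cyclic reduction when the substitution occurs in a context where signs mix. The naive induction above runs smoothly inside the positive-abelianization regime because no cancellation between the inserted $x^{q}$ blocks and the existing $x^{m_{i}}$ blocks can occur, but one must verify that restricting to this regime loses no generality and that the concluding sign-changes do not disturb the two-syllable structure. I would resolve this by checking that the cyclically reduced word of a conjugate of $w^{\pm 1}$ under $x \mapsto x^{\pm 1}$ or $y \mapsto y^{\pm 1}$ remains of the stated form, which is direct since each such automorphism merely shifts the constant $n$ or flips the sign of $\epsilon$. The whole argument is the word-theoretic counterpart of the Stern--Brocot / Farey mediant descent, and the appearance of two consecutive integers $\{n, n+1\}$ in the statement is precisely the defining property of Christoffel (equivalently, Sturmian) words.
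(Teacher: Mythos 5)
The paper does not prove this lemma at all: it is quoted verbatim from Osborne--Zieschang \cite{OZ} and used only through its corollary that a cyclically reduced primitive word cannot contain both $x$ and $x^{-1}$ (or both $y$ and $y^{-1}$). Your proposal therefore cannot be compared with an argument in the paper, but it is a correct reconstruction of the standard proof of the Osborne--Zieschang characterization, and the two ingredients you isolate are the right ones: Nielsen's theorem that the kernel of $\Aut(F_2)\to \mathrm{GL}_2(\mathbb Z)$ is the inner automorphism group (whence primitive elements with the same, necessarily primitive, abelianization are conjugate --- in fact conjugate on the nose, so ``up to inversion'' is not even needed when the abelianizations agree exactly), together with the Euclidean/Christoffel descent realizing each primitive vector $(a,b)$ with $a,b\ge 0$ by a positive word with the two-value block structure. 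Two small points deserve explicit verification in a full write-up. First, in the inductive step the hypothesis for $(r,b)$ with $0\le r<b$ naturally delivers $w_0$ as a product of blocks $x y^{k_i}$ with $k_i\in\{k,k+1\}$ and $k\ge 1$; to apply the substitution $y\mapsto yx^{q}$ block-by-block as you describe, you must first rebracket $w_0$ as a product of blocks $y x^{m_j}$ and observe that $k_i\ge 1$ forces $m_j\in\{0,1\}$ (no two $x$'s are adjacent), so that the output window $\{q,q+1\}$ indeed has width one; this is the precise sense in which ``the roles of $x$ and $y$ swap'' at each step of the descent. Second, conjugacy only gives that \emph{some} cyclically reduced form has the stated shape, since a cyclic rotation of such a product need not literally be such a product as a linear word; this is exactly what the lemma's indefinite article asserts, and the sign-invariance that the paper actually uses is preserved by rotation, so nothing is lost. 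With those two remarks made explicit, the argument is complete.
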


Therefore, we see that no cyclically reduced form of a primitive element
in terms of $x$ and $y$ can contain $x$ and $x^{-1}$ $($and $y$ and $y^{-1})$ simultaneously.

Let $\{E'_1, E'_2\}$ be a complete meridian system of the genus-$2$ handlebody $W$.
Assign symbols $x$ and $y$ to the oriented circles $\partial E_1'$ and $\partial E'_2$ respectively.
Then any oriented simple closed curve on $\partial W$ intersecting $\partial E_1' \cup \partial E'_2$ transversely and minimally represents an element of the free group
$\pi_1 (W) = \langle x, y \rangle$, whose word in $\{x^{\pm 1}, y^{\pm 1} \}$ can be read off from the intersections with $\partial E_1'$ and $\partial E'_2$.
Let $l$ be an oriented simple closed curve on $\partial W$ that meets $\partial E_1' \cup \partial E'_2$ transversely and minimally.
The following lemma is given in \cite{CK15b}.

\begin{lemma}[Lemma 3.3 in \cite{CK15b}]
With a suitable choice of orientations of $\partial E_1'$ and $\partial E_2'$, if a word  in $\{x^{\pm 1}, y^{\pm 1} \}$ corresponding to $l$ contains one of the pairs of terms$:$
\begin{enumerate}
\item
both of $xy$ and $xy^{-1}$, or
\item
both of $xy^nx$ and $y^{n+2}$ for $n \geq 0$,
\end{enumerate}
then the element of $\pi_1 (W)$ represented by $l$ cannot be $($a positive power of $)$ a primitive element.
\label{lem:property_of_primitive_elements2}
\end{lemma}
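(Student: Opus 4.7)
The plan is to argue by contradiction. Suppose the element represented by $l$ is a positive power $u^k$ of a primitive element $u \in \pi_1 (W)$ with $k \geq 1$. Since the cyclic subwords of $u^k$ coincide with those of $u$, it suffices to analyze $u$. By Lemma \ref{lem:property of primitive elements} (Osborne--Zieschang), the cyclically reduced form of $u$ is a product of terms in $\{x^{\epsilon}y^{\nu},\, x^{\epsilon}y^{\nu+1}\}$ (Form A) or in $\{y^{\epsilon}x^{\nu},\, y^{\epsilon}x^{\nu+1}\}$ (Form B), for some fixed $\epsilon \in \{\pm 1\}$ and $\nu \in \mathbb{Z}$.

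Case (1) is immediate from the corollary recorded right after Lemma \ref{lem:property of primitive elements}: the subword $xy$ contributes a letter $y$ while $xy^{-1}$ contributes $y^{-1}$, so both $y$ and $y^{-1}$ occur in $u$, which is impossible in either OZ form.

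For case (2) we split by OZ form. In Form A, the $x$-letter present in $xy^nx$ forces $\epsilon = 1$, and then the $y$-blocks between consecutive $x$'s all lie in $\{\nu, \nu+1\}$. The hypothesis $xy^nx$ forces one such block to have size exactly $n$, giving $\nu \leq n$, while $y^{n+2}$ forces some block of size at least $n+2$, giving $\nu \geq n+1$; these inequalities are incompatible. In Form B, one obtains $\epsilon = 1$ (from the positive-sign $y$-letters in $y^{n+2}$) and $\nu = 0$ (since $\nu \geq 1$ precludes consecutive $y$-letters, while $\nu \leq -1$ precludes the $x$-letters of $xy^nx$). Hence $u$ is a positive word over $\{x,y\}$ with no $xx$, which we write cyclically as $y^{a_1}xy^{a_2}x\cdots y^{a_s}x$ with each $a_i \geq 1$. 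The two subword hypotheses translate into $\min_i a_i \leq n$ and $\max_i a_i \geq n+2$.

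The key step, and the main obstacle of the proof, is to turn this imbalance of exponents into a contradiction with primitiveness. Setting $m = \min_i a_i$ and applying the Nielsen automorphism $\phi \colon x \mapsto xy^{-m},\ y \mapsto y$, a direct substitution followed by cyclic reduction gives $\phi(u) = y^{a_1-m}xy^{a_2-m}x\cdots y^{a_s-m}x$ cyclically. This is a positive word that simultaneously contains $xx$ (from any index $i$ with $a_i = m$, leaving an empty intermediate $y$-block) and $y^2$ (from any index $j$ with $a_j \geq m+2$, which exists by assumption). Because $\phi$ is an automorphism, $\phi(u^k) = \phi(u)^k$ is still a positive power of a primitive element; but a cyclically reduced positive word containing both $x^2$ and $y^2$ cannot be such a power, by a second application of Osborne--Zieschang: Form A with $\nu = 0$ forbids $y^2$, Form A with $\nu \geq 1$ forces all $x$-blocks to have size one (forbidding $x^2$), and the Form B cases are ruled out symmetrically. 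This contradiction completes the argument.
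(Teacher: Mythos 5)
Your argument treats the hypothesis as a statement about the cyclically reduced form of the element represented by $l$, but that is not what the lemma says. The ``word corresponding to $l$'' is read off from the intersection points of $l$ with $\partial E_1'\cup\partial E_2'$, and even when $l$ is in minimal position this word need not be reduced: an essential arc of $l\cap\Sigma'$ with both endpoints on the same boundary circle of the $4$-holed sphere $\Sigma'$ contributes a cancelling pair $y^{\pm1}y^{\mp1}$ or $x^{\pm1}x^{\mp1}$. (This is exactly why Lemmas \ref{lem:key2} and \ref{lem:key3} are stated for subwords $xy^{\epsilon_1}\cdots y^{\epsilon_l}x^{\pm1}$ with mixed signs $\epsilon_i$, and why their proofs spend most of their effort eliminating such cancelling pairs.) After free reduction the subwords $xy$, $xy^{-1}$, $xy^nx$, $y^{n+2}$ posited in the hypothesis may simply disappear, so you cannot pass directly to the Osborne--Zieschang normal form of $u$; as a purely algebraic claim about an arbitrary unreduced word representing the element, the statement would even be false --- for instance $xyx^{-1}xy^{-1}$ contains both $xy$ and $xy^{-1}$ yet represents the primitive element $x$. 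The missing step, which is the real content of the proof (the lemma is quoted from \cite{CK15b}, but the same issue is handled explicitly in the proofs of Lemmas \ref{lem:key}, \ref{lem:key2} and \ref{lem:key3} in this paper), is geometric: one uses the arcs of $l\cap\Sigma'$ forced by the hypothesized subwords, together with the equalities $|l\cap {e_1'}^{+}|=|l\cap {e_1'}^{-}|$ and $|l\cap {e_2'}^{+}|=|l\cap {e_2'}^{-}|$ and the disjointness of the arcs in the $4$-holed sphere, to rule out arcs returning to the same boundary circle and conclude that the word is in fact cyclically reduced, or else to modify the meridian system by a band sum until it is.

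Once cyclic reducedness is granted, your algebraic endgame is correct and is a legitimate variant of what the paper does: the Form A analysis is a direct reading of Lemma \ref{lem:property of primitive elements}, and your Nielsen automorphism $x\mapsto xy^{-m}$ is precisely the algebraic shadow of the band-sum (change of complete meridian system) performed in the proofs of Lemmas \ref{lem:key2} and \ref{lem:key3}. So the deficiency is not in the group theory but in the omitted topological input that connects the geometrically read-off word to the reduced form of the element.
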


We introduce three more sufficient conditions for non-primitiveness as follows.

\begin{lemma}
With a suitable choice of orientations of $\partial E_1'$ and $\partial E_2'$,
if a word in $\{x^{\pm 1}, y^{\pm 1} \}$ corresponding to $l$ contains one of the pairs of terms$:$
\begin{enumerate}
\item
both of $xy$ and $(xy^{-1})^{\pm 1}$,
\item
both of $xy$ and $(x^{-1}y)^{\pm 1}$, or
\item
both of $x^2$ and $y^2$,
\end{enumerate}
then the element of $\pi_1 (W)$ represented by $l$ cannot be $($a positive power of $)$ a primitive element.
\label{lem:key}
\end{lemma}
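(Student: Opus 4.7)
The plan is to invoke the Osborne--Zieschang characterization (Lemma~\ref{lem:property of primitive elements}) and, for each of the three cases, rule out both of its two canonical forms by sign-bookkeeping. Since $l$ meets $\partial E_1' \cup \partial E_2'$ minimally, the word $w$ corresponding to $l$ is cyclically reduced; suppose toward contradiction that $l$ represents a positive power of a primitive element. Then, up to cyclic conjugation, $w$ has one of two shapes: \emph{Form $(\mathrm{A})$}, $w = (x^{\epsilon} y^{a_1})(x^{\epsilon} y^{a_2}) \cdots (x^{\epsilon} y^{a_k})$ for some fixed $\epsilon \in \{\pm 1\}$, $n \in \mathbb{Z}$ and $a_i \in \{n, n+1\}$; or \emph{Form $(\mathrm{B})$}, the same with the roles of $x$ and $y$ exchanged. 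Two immediate consequences will be used throughout: in Form $(\mathrm{A})$ all occurrences of $x^{\pm 1}$ in $w$ share a sign and any maximal $y$-subword sandwiched between two consecutive $x$'s has exponent in $\{n, n+1\}$; the symmetric statements hold in Form $(\mathrm{B})$.

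In case $(1)$ I split on the sign inside $(xy^{-1})^{\pm 1}$. The pair $\{xy,\,xy^{-1}\}$ shows that some $y$-block immediately after $x^{+1}$ has positive exponent while another has negative exponent, which Form $(\mathrm{A})$ forbids (the two-element set $\{n, n+1\}$ cannot meet both signs) and which Form $(\mathrm{B})$ also forbids (since $y$ and $y^{-1}$ both appear). The pair $\{xy,\,yx^{-1}\}$ exhibits both $x^{+1}$ and $x^{-1}$ in $w$, killing Form $(\mathrm{A})$, and in Form $(\mathrm{B})$ it produces $x$-blocks of opposite signs on the two sides of a $y^{+1}$, which is again impossible. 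Case $(2)$ is entirely parallel: the pair $\{xy,\,x^{-1}y\}$ exhibits $x$ of both signs and produces $x$-blocks of opposite signs abutting a $y^{+1}$, while the pair $\{xy,\,y^{-1}x\}$ exhibits $y$ of both signs and produces $y$-blocks of opposite signs abutting an $x^{+1}$.

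For case $(3)$, observe that inside Form $(\mathrm{A})$ the subword $x^2$ forces some $y$-block to have exponent $0$, pinning $\{n, n+1\}$ to $\{-1,0\}$ or $\{0,1\}$; but then every $y$-block has exponent of absolute value at most $1$, contradicting the appearance of $y^2$, which requires a $y$-block of exponent at least $2$ in absolute value. Form $(\mathrm{B})$ is excluded by the symmetric argument. The whole proof is algebraic bookkeeping; the only place requiring care is checking that \emph{both} canonical Osborne--Zieschang forms fail in \emph{every} subcase, since overlooking either form would leave a loophole through which a primitive element could slip.
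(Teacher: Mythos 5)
There is a genuine gap at the very first step: you assert that minimality of $|\,l \cap (\partial E_1' \cup \partial E_2')\,|$ forces the word $w$ read off from $l$ to be cyclically reduced, and everything afterwards rests on that. This is false in general. Cutting $\Sigma$ along $\partial E_1' \cup \partial E_2'$ gives a $4$-holed sphere $\Sigma'$, and an arc of $l \cap \Sigma'$ with both endpoints on the same boundary circle contributes a cancelling pair $x^{\pm 1}x^{\mp 1}$ or $y^{\pm 1}y^{\mp 1}$ to $w$; such an arc is \emph{essential} in $\Sigma'$ whenever it separates the remaining boundary circles from one another, so it cannot be removed by isotopy and is perfectly compatible with minimal position. (Lemmas \ref{lem:key2} and \ref{lem:key3} of the paper are devoted precisely to minimal-position curves whose words contain $yy^{-1}$, so the phenomenon really occurs.) If $w$ is not reduced, the Osborne--Zieschang normal form only constrains the \emph{cyclic reduction} of $w$, and the hypothesized subwords $xy$, $xy^{-1}$, $x^2$, $y^2$ need not survive free reduction; your sign-bookkeeping is therefore being applied to a word you have no a priori control over.

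The missing ingredient is a topological argument showing that, \emph{under the hypotheses of the lemma}, no returning arcs can occur --- and this is exactly how the paper proceeds. For instance, in case (1) the subwords $xy$ and $(xy^{-1})^{\pm 1}$ produce two disjoint arcs of $l \cap \Sigma'$ joining ${e_1'}^+$ to ${e_2'}^-$ and ${e_1'}^+$ to ${e_2'}^+$; since $|\,l \cap {e_1'}^+| = |\,l \cap {e_1'}^-|$ and $|\,l \cap {e_2'}^+| = |\,l \cap {e_2'}^-|$, there must also be arcs joining ${e_1'}^-$ to ${e_2'}^+$ and ${e_1'}^-$ to ${e_2'}^-$, and these four mutually disjoint arc types leave no room for an essential arc with both endpoints on one boundary circle. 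Only then is $w$ cyclically reduced, and only then does your Osborne--Zieschang case analysis (which is correct, and in case (3) somewhat more explicit than the paper's one-line appeal to Lemma \ref{lem:property of primitive elements}) finish the proof. You need to insert this cut-surface argument before invoking the normal form.
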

\begin{proof}
Let $\Sigma'$ be the $4$-holed sphere cut off from $\partial W$ along $\partial E'_1 \cup \partial E'_2$.
Denote by ${e_1'}^+$ and ${e_1'}^-$ (by ${e_2'}^+$ and ${e_2'}^-$, respectively) the boundary circles of $\Sigma'$ coming from
$\partial E_1'$
(from $\partial E_2'$, respectively).

Suppose first that $l$ determines a word containing both $xy$ and $(xy^{-1})^{\pm 1}$.
We can assume that there are two subarcs $\alpha_+$ and $\alpha_-$ of $l \cap \Sigma'$ such that
$\alpha_+$ connects ${e_1'}^+$ and ${e_2'}^-$, and $\alpha_-$ connects ${e_1'}^+$ and ${e_2'}^+$ as in Figure \ref{fig:arcs_1}.

\medskip

\begin{center}
\begin{overpic}[width=3.5cm,clip]{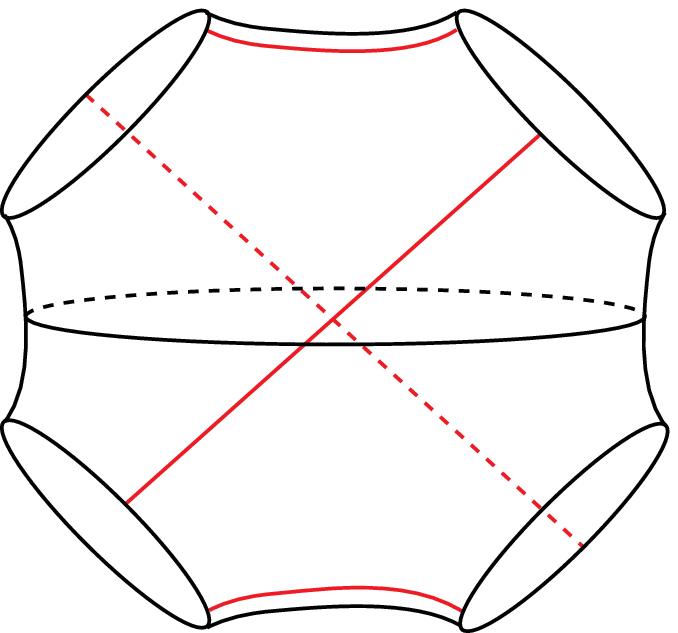}
\linethickness{3pt}
\put(-2, 80){${e_1'}^+$}
\put(-2, 3){${e_1'}^-$}
\put(90, 80){${e_2'}^-$}
\put(90, 3){${e_2'}^+$}
\put(33, 64){\color{red} \small $\alpha_-$}
\put(45, 80){\color{red} \small $\alpha_+$}
\put(33, 24){\color{red} \small $\beta_-$}
\put(45, 11){\color{red} \small $\beta_+$}
\end{overpic}
\captionof{figure}{The arcs $\alpha_\pm$ and $\beta_\pm$ on $\Sigma'$.}
\label{fig:arcs_1}
\end{center}

\medskip

Since $|~l \cap {e_1'}^+| = |~l \cap {e_1'}^-|$ and $|~l \cap {e_2'}^+| = |~l \cap {e_2'}^-|$, we must have two other arcs $\beta_+$ and $\beta_-$ of $l \cap \Sigma'$ such that $\beta_+$ connects ${e_1'}^-$ and ${e_2'}^+$, and $\beta_-$ connects ${e_1'}^-$ and ${e_2'}^-$.
See Figure \ref{fig:arcs_1}.
Consequently, there exists no arc component of $l \cap \Sigma'$ whose endpoints lines on the same boundary component of
$\Sigma'$.
That is, any word corresponding to $l$ contains neither $x^{\pm1} x^{\mp1}$ nor $y^{\pm1} y^{\mp1}$, and hence, it is cyclically reduced.
Since that word contains both
$x$ and $x^{-1}$ (or $y$ and $y^{-1}$), $l$  cannot represent (a positive power of)
a primitive element of $\pi_1 (W)$.
The case where $l$ determines a word containing both $xy$ and $(x^{-1}y)^{\pm 1}$ can be proved in the same way.

Next suppose that $l$ determines a word containing both $x^2$ and $y^2$.
Then there are two arcs $\alpha_+$ and $\alpha_-$ of $l \cap \Sigma'$ such that $\alpha_+$ connects ${e_1'}^+$ and ${e_1'}^-$, and $\alpha_-$ connects ${e_2'}^+$ and ${e_2'}^-$.
By a similar argument to the above,
we must have two other arcs $\beta_+$ and $\beta_-$ of $l \cap \Sigma'$ such that
$\beta_+$ connects ${e_1'}^+$ and ${e_2'}^\pm$, say ${e_2'}^-$, and
$\beta_-$ connects ${e_1'}^-$ and ${e_2'}^+$.
See Figure \ref{fig:arcs_2}.
\begin{center}
\begin{overpic}[width=3.5cm,clip]{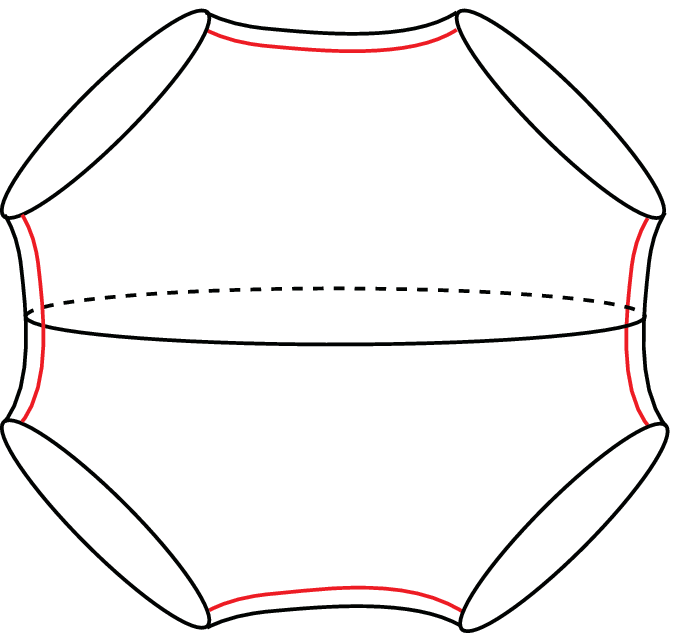}
\linethickness{3pt}
\put(-2, 80){${e_1'}^+$}
\put(-2, 3){${e_1'}^-$}
\put(90, 80){${e_2'}^-$}
\put(90, 3){${e_2'}^+$}
\put(8, 54){\color{red} \small $\alpha_+$}
\put(82, 54){\color{red} \small $\alpha_-$}
\put(45, 77){\color{red} \small $\beta_+$}
\put(45, 10){\color{red} \small $\beta_-$}
\end{overpic}
\captionof{figure}{The arcs $\alpha_\pm$ and $\beta_\pm$ on $\Sigma'$.}
\label{fig:arcs_2}
\end{center}
Then we see again that the word corresponding to $l$ is cyclically reduced.
Since that word contains both of $x^2$ and $y^2$,
$l$ cannot represent (a positive power of) a primitive element.
\end{proof}

\begin{lemma}
With a suitable choice of orientations of $\partial E_1'$ and $\partial E_2'$,
if a word in $\{x^{\pm 1}, y^{\pm 1} \}$ corresponding to $l$ contains
a term of the form
$x y^{\epsilon_{1}} y^{\epsilon_{2}} \cdots y^{\epsilon_{l}} x^{-1}$,
where $\epsilon_{i} = \pm 1$ $(i=1, 2, \ldots, l)$ and $\sum_{i=1}^{l} \epsilon_{i} \neq 0$,
then the element of $\pi_1 (W)$ represented by $l$ cannot be $($a positive power of $)$ a primitive element.
\label{lem:key2}
\end{lemma}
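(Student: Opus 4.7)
The plan is to argue by contradiction: suppose the element of $\pi_{1}(W)$ represented by $l$ is a positive power $g^{n}$ of some primitive element $g\in\pi_{1}(W)=\langle x,y\rangle$, with $n\geq 1$. The strategy is to exploit the rigid sign discipline on primitive elements supplied by Lemma \ref{lem:property of primitive elements}, and compare it with the cyclically reduced word one reads off from $l$.

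First I would invoke Lemma \ref{lem:property of primitive elements}: after possibly exchanging the roles of $x$ and $y$, the conjugacy class of $g$ admits a cyclically reduced representative that is a product of blocks of the form $x^{\eta}y^{m}$ or $x^{\eta}y^{m+1}$ for some fixed sign $\eta\in\{+1,-1\}$ and integer $m$. The elementary point is that in any such expression every occurrence of $x^{\pm 1}$ has exponent $\eta$, and the $n$-fold concatenation of a cyclically reduced word is itself cyclically reduced, so the conjugacy class of $g^{n}$ has a cyclically reduced representative in which every $x^{\pm 1}$-letter still carries the exponent $\eta$.

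Next, I would translate the minimality of $l\cap(\partial E_{1}'\cup\partial E_{2}')$ into cyclic reducibility of the word $w$ read off from $l$: any cyclic appearance of $x^{\pm 1}x^{\mp 1}$ or $y^{\pm 1}y^{\mp 1}$ in $w$ would give an arc of $l\cap\Sigma'$ with both endpoints on a single boundary circle of $\Sigma'$, and a standard innermost-arc argument then produces an isotopy of $l$ strictly decreasing $|l\cap(\partial E_{1}'\cup\partial E_{2}')|$, contradicting minimality. Hence $w$, read cyclically, is a rotation of the cyclically reduced representative of $g^{n}$ from the previous paragraph, and so all of its $x^{\pm 1}$-letters carry the same sign $\eta$.

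The contradiction now comes from the subword hypothesis: the block $xy^{\epsilon_{1}}\cdots y^{\epsilon_{l}}x^{-1}$ exhibits both an $x$ and an $x^{-1}$ as letters of $w$, which violates the one-sign conclusion just established. The assumption $\sum_{i=1}^{l}\epsilon_{i}\neq 0$ is included to ensure that the subword is genuinely non-trivial inside $w$: it rules out the degenerate case $l=0$ and, more importantly, guarantees that the intermediate $y^{\epsilon_{i}}$-string does not vanish in the free group, so that the bracketing $x^{\pm 1}$-letters cannot be absorbed by any residual reduction. The only delicate step I expect to pin down carefully is this last piece of sign bookkeeping; once one is satisfied that the outer $x^{\pm 1}$'s genuinely persist in the cyclically reduced word, the non-primitivity follows immediately from the sign discipline forced by Lemma \ref{lem:property of primitive elements}.
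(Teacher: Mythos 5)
Your overall strategy---reduce to a cyclically reduced word and invoke the sign discipline of Lemma \ref{lem:property of primitive elements}---is the right target, but the step where you claim that minimality of $|l \cap (\partial E_1' \cup \partial E_2')|$ forces the word read off from $l$ to be cyclically reduced is false, and this is precisely the difficulty the lemma is designed to overcome. In the $4$-holed sphere $\Sigma'$, an arc of $l \cap \Sigma'$ with both endpoints on a single boundary circle need not cut off a disk: it can be essential, separating the remaining three boundary circles into two nonempty groups, and such an arc cannot be removed by any innermost-arc/bigon isotopy. It nevertheless contributes a cancelling pair $y^{\pm1}y^{\mp1}$ (or $x^{\pm1}x^{\mp1}$) to the word. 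Indeed, the hypothesis of the lemma explicitly allows the subword $xy^{\epsilon_1}\cdots y^{\epsilon_l}x^{-1}$ to be unreduced (only $\sum\epsilon_i\neq 0$ is assumed), and the paper's own argument spends all of its effort on exactly this case: when $\epsilon_1=1$ and $\epsilon_l=-1$ the subword contains $yy^{-1}$, realized by an essential arc from $e_2'^-$ to itself, and the proof proceeds by repeatedly replacing the meridian system $\{E_1',E_2'\}$ by $\{E_1',E^*\}$, where $E^*$ is a band sum of $E_1'$ and $E_2'$ along the subarc realizing $xy$; this shortens the subword while preserving the form $xy^{\epsilon'_1}\cdots y^{\epsilon'_{l'}}x^{-1}$ and the nonvanishing of the exponent sum, and terminates in a genuinely reduced configuration to which Lemma \ref{lem:key} applies. (The cyclic-reducedness deduction you are implicitly borrowing from the proof of Lemma \ref{lem:key} is obtained there from a counting argument on specific arc patterns, not from minimality alone, and is not available under the present hypotheses.)

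There is a second, related gap in your final paragraph. Knowing that the subword reduces in the free group to $xy^{m}x^{-1}$ with $m=\sum\epsilon_i\neq 0$ does not yet show that the cyclically reduced form of the element $[l]$ contains both $x$ and $x^{-1}$: the bracketing letters $x$ and $x^{-1}$ could in principle be cancelled against letters of $l$'s word lying \emph{outside} the given subword. The condition $\sum\epsilon_i\neq 0$ only rules out internal collapse. This is why a purely algebraic substring argument cannot close the proof, and why the paper instead keeps track of the actual subarc $\alpha$ of $l$ throughout the band-sum reductions and finishes with the geometric criterion of Lemma \ref{lem:key}, whose arc-pattern hypotheses certify that the offending letters survive into a cyclically reduced form. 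To repair your proposal you would need to supply both of these ingredients, at which point you would essentially be reproducing the paper's argument.
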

\begin{proof}
Set $w = x y^{\epsilon_{1}} y^{\epsilon_{2}} \cdots y^{\epsilon_{l}} x^{-1}$.
Let $\alpha$ the subarc of $l$ corresponding to the subword $w$.
By cutting the Heegaard surface $\Sigma$ along $\partial E_1' \cup \partial E_2' $,
we get a 4-holed sphere $\Sigma'$.
Let $e'^\pm_{1}$ and $e'^\pm_{2}$ be the holes coming from $E_1'$ and $E_2'$
respectively.
Without loss of generality we can assume that $\epsilon_1 =1$.
If $\epsilon_l =1$, then
we get the conclusion by Lemma \ref{lem:key}.
Thus, we assume that $\epsilon_l =-1$, this implies that the word $w$ contains the term
$y y^{-1}$.
Let $\alpha_0$, $\alpha_1$, $\alpha_2$ be
the subarcs of $\alpha$ corresponding to the term $xy$, $y y^{-1}$, $y^{-1} x^{-1}$.
Note that on the surface $\Sigma'$ the arc $\alpha_0$ connects the two circles $e'^+_1$ and $e'^-_2$,
$\alpha_1$ connects the circle $e'^-_2$ to itself, and
$\alpha_2$ connects the two circles $e'^+_2$ and $e'^-_1$.
Let $E^*$ be the band sum of $E'_1$ and $E'_2$ along $\alpha_0$, that is,
$E^*$ is the frontier of a regular neighborhood of the union $E'_1 \cup \alpha_0 \cup E'_2$.
Then $E_1'$ and $E^*$ form a new complete meridian system of $W$.
See Figure \ref{fig:arcs_6}.
\begin{center}
\begin{overpic}[width=4cm,clip]{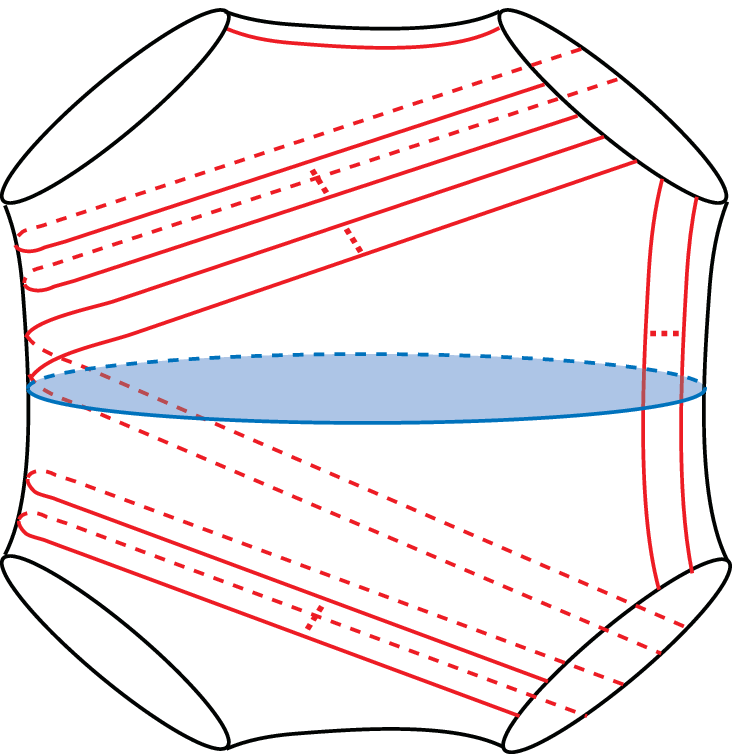}
\linethickness{3pt}
\put(0, 106){$e'^+_1$}
\put(0, 3){$e'^-_1$}
\put(100, 3){$e'^+_2$}
\put(100, 106){$e'^-_2$}
\put(42, 104){\color{red} \small $\alpha_0$}
\put(35, 14){\color{red} \small $\alpha_1$}
\put(115, 70){\color{red} \small $\alpha_2$}
\put(52, 53){\color{blue} \small $E^*$}
\end{overpic}
\captionof{figure}{The arcs $\Sigma' \cap \alpha$ and the disk $E^*$.}
\label{fig:arcs_6}
\end{center}
We assign the same symbol $y$ to $\partial E^*$.
Then the arc $\alpha$ determines
a word of the form
$x y^{\epsilon'_{1}} y^{\epsilon'_{2}} \cdots y^{\epsilon'_{l'}} x^{-1}$,
where $\epsilon'_{i} = \pm 1$ $(i=1, 2, \ldots, l)$,
$\sum_{i=1}^{l'} \epsilon'_{i} \neq 0$ and $l' < l$.
Applying this argument finitely many times, we end with the case
where $w$ is reduced (in particular $\epsilon_1 = \epsilon_l$), thus, the conclusion follows.
\end{proof}

\begin{lemma}
With a suitable choice of orientations of $\partial E_1'$ and $\partial E_2'$,
if a word in $\{x^{\pm 1}, y^{\pm 1} \}$ corresponding to $l$ contains
both of terms of the forms
$x y^{\epsilon_{1}} y^{\epsilon_{2}} \cdots y^{\epsilon_{l}}x$ and
$x y^{\delta_{1}} y^{\delta_{2}} \cdots y^{\delta_{k}}x$,
where $\epsilon_{i} = \pm 1$ for $i \in \{1 , 2, \ldots, l\}$,
$\delta_{j} = \pm 1$ for $j \in \{1, 2, \ldots, k\}$ and
$| \sum_{i=1}^{l} \epsilon_{i} - \sum_{j=1}^{k} \delta_{j} | \geq 2$,
then the element of $\pi_1 (W)$ represented by $l$ cannot be $($a positive power of $)$ a primitive element.
\label{lem:key3}
\end{lemma}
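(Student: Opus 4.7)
The plan is to emulate the proof strategy of Lemma~\ref{lem:key2}. Set $s = \sum_{i=1}^{l}\epsilon_i$ and $t = \sum_{j=1}^{k}\delta_j$; by hypothesis $|s-t|\ge 2$. First I would perform iterated band sums on the meridian system $\{E_1', E_2'\}$--exactly as in the proof of Lemma~\ref{lem:key2}--to pass to a meridian system in which the cyclic word $w$ corresponding to $l$ is cyclically reduced. Since signed exponent sums are (up to sign) algebraic intersection numbers of subarcs of $l$ with the meridian disks, and so are invariant under the cancellations performed in this reduction, after the process is complete the two distinguished subwords have become $xy^s x$ and $xy^t x$ respectively, sitting as substrings of the reduced cyclic word.

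Next I would split into cases according to the signs of $s, t$. If the resulting cyclically reduced $w$ contains both $x$ and $x^{-1}$, or both $y$ and $y^{-1}$, the Osborne--Zieschang criterion (Lemma~\ref{lem:property of primitive elements}) immediately rules out $l$ representing a positive power of a primitive element. In particular, if $s$ and $t$ have strictly opposite signs the reduced word has $y$-letters of both signs, so we are done. We may therefore assume, after reorienting $\partial E_1'$ and $\partial E_2'$ if necessary, that $w$ contains only the letters $x$ and $y$; in particular $s, t \ge 0$. Assume WLOG $s \ge t$; the hypothesis gives $s \ge t+2 \ge 2$. Then $w$ contains the substring $xy^t x$ (coming from the reduced $w_2$) and also contains $y^{t+2}$ as a substring of the $y^s$ sitting inside the reduced $w_1$. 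Applying Lemma~\ref{lem:property_of_primitive_elements2}(2) with $n = t \ge 0$ then yields that $l$ cannot represent a positive power of a primitive, the desired contradiction.

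The main obstacle is the first step: verifying that the band-sum procedure can indeed be executed while keeping the two distinguished subwords identifiable and their signed $y$-counts $s$ and $t$ intact. The key point is that the signed $y$-count between the two bracketing $x$-letters of $w_i$ equals the algebraic intersection number of the underlying subarc of $l$ with $\partial E_2'$, an invariant of any homotopy of the meridian system that does not add further intersections; cancellations of $yy^{-1}$ pairs inside the subarc do not alter this count, and cancellations performed outside $w_1 \cup w_2$ do not interact with it at all. This requires careful choice of the arcs of band summation, much as in Lemma~\ref{lem:key2} where a single such subarc is tracked, but with two subarcs to monitor simultaneously. Once this bookkeeping is in place, the remainder of the proof reduces to the direct application of the earlier non-primitiveness lemmas described above.
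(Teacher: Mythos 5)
Your overall strategy is the paper's: band-sum the meridian system to shorten the two distinguished subwords while preserving their signed $y$-counts, then invoke the earlier non-primitiveness criteria. Your endgame is in fact a small simplification: once $w_1$ and $w_2$ have become $xy^sx$ and $xy^tx$ with $s\geq t+2\geq 2$, quoting Lemma \ref{lem:property_of_primitive_elements2}(2) with $n=t$ finishes at once, whereas the paper runs a second round of band sums to drive the smaller exponent down to $0$ and then applies Lemma \ref{lem:key}(3) to the pair $x^2$, $y^2$. That part of your plan is sound.

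The reduction step, however, has two genuine gaps. First, you assert that the band sums can be arranged so that the \emph{entire} cyclic word becomes cyclically reduced. That is stronger than what the construction of Lemma \ref{lem:key2} delivers and is not justified: the procedure only shortens the two subwords $w_1$ and $w_2$, and even this requires preliminary normalizations that your outline omits. One must first dispose, via Lemma \ref{lem:key}, of the configurations in which one of $\epsilon_1,\epsilon_l,\delta_1,\delta_k$ equals $-1$; observe that if $w_1$ is reducible then no $x^2$ term can occur in the word (so $k\neq 0$); and show the process cannot stall, i.e., that if one subword collapses to $x^2$ before the other is reduced, the other is forced to be already reduced. This bookkeeping is exactly where the content of the lemma lies, and "the lengths strictly decrease" does not by itself supply it. Second, your treatment of the opposite-sign case appeals to Osborne--Zieschang (Lemma \ref{lem:property of primitive elements}) applied to "the cyclically reduced $w$", which presupposes the unestablished global reducedness. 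The paper instead routes this case through Lemma \ref{lem:key}(1) (the word contains both $xy$ and $xy^{-1}$), whose proof contains the geometric argument -- no arc of $l\cap\Sigma'$ returns to the boundary circle it left -- that certifies the needed cyclic reducedness. Replacing your appeal to Osborne--Zieschang by a citation of Lemma \ref{lem:key} repairs that step; the first gap requires importing the paper's case analysis essentially in full.
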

\begin{proof}
Set $w_1 = x y^{\epsilon_{1}} y^{\epsilon_{2}} \cdots y^{\epsilon_{l}}x$,
$w_2 = x y^{\delta_{1}} y^{\delta_{2}} \cdots y^{\delta_{k}}x$,
$m = \sum_{i=1}^{l} \epsilon_{i}$ and $n = \sum_{j=1}^{k} \delta_{j}$.
Let $\alpha$ and $\beta$ be the subarcs of $l$ corresponding to the subwords $w_1$ and $w_2$ respectively.
By cutting the Heegaard surface $\Sigma$ along $\partial E_1' \cup \partial E_2' $,
we get a 4-holed sphere $\Sigma'$.
Let $e'^\pm_{1}$ and $e'^\pm_{2}$ be the holes coming from $E_1'$ and $E_2'$ respectively.

Suppose first that both subwords $w_1$ and $w_2$ are reduced, so $w_1 = x y^m x$ and
$w_2 = x y^n x$.
If both of $m$ and $n$ are non-zero and these have different signs, then
we get the conclusion by Lemma \ref{lem:key}.
Thus, we assume that $m$ and $n$ have the same sign or one of them is zero.
Without loss of generality we can assume that $n > m \geq 0$.
Suppose that $m=0$. Then $w_1 = x^2$ and $w_2 = x y^n x$ ($n \geq 2$) and
thus, by Lemma \ref{lem:key}, $l$ cannot represent a primitive element of $\pi_1 (W)$.
Suppose that $m>0$.
Let $\alpha_0$ be the subarc of $\alpha$ corresponding to the term $xy$.
Then $\alpha_0$ connects two circles $e'^+_1$ and $e'^-_2$ in $\Sigma'$.
Let $E^*$ be the band sum of $E'_1$ and $E'_2$ along $\alpha_0$.
Then $E_1'$ and $E^*$ form a new complete meridian system of $W$.
See
Figure \ref{fig:arcs_4}.
\begin{center}
\begin{overpic}[width=3.5cm,clip]{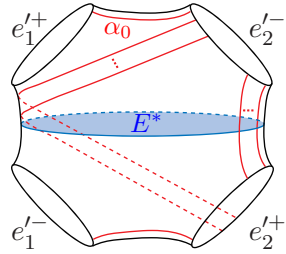}
\linethickness{3pt}
\put(0, 80){$e'^+_1$}
\put(0, 3){$e'^-_1$}
\put(90, 80){$e'^-_2$}
\put(90, 3){$e'^+_2$}
\put(35, 81){\color{red} \small $\alpha_0$}
\put(45, 44){\color{blue} \small $E^*$}
\end{overpic}
\captionof{figure}{The arcs $\Sigma' \cap (\alpha \cup \beta)$ and the disk $E^*$.}
\label{fig:arcs_4}
\end{center}
Assigning the same symbol $y$ to $\partial E^*$, the arc $\alpha$ determines
a word of the form $xy^{m-1}x$ while $\beta$ determines
$xy^{n-1}x$.
Applying this argument $m$ times, we finally end with the case of $m=0$, thus, the conclusion follows by induction.

Next suppose that at least one of $w_1$ or $w_2$ is reducible.
Without loss of generality we can assume that
$w_1$ is reducible and $\epsilon_{1} = 1$.
Since $w_1$ contains the term $y y^{-1}$, there is no arc in $\Sigma' \cap l$ connecting
$e'^-_{1}$ and $e'^+_{1}$.
See Figure \ref{fig:arcs_3}.
\begin{center}
\begin{overpic}[width=3.5cm,clip]{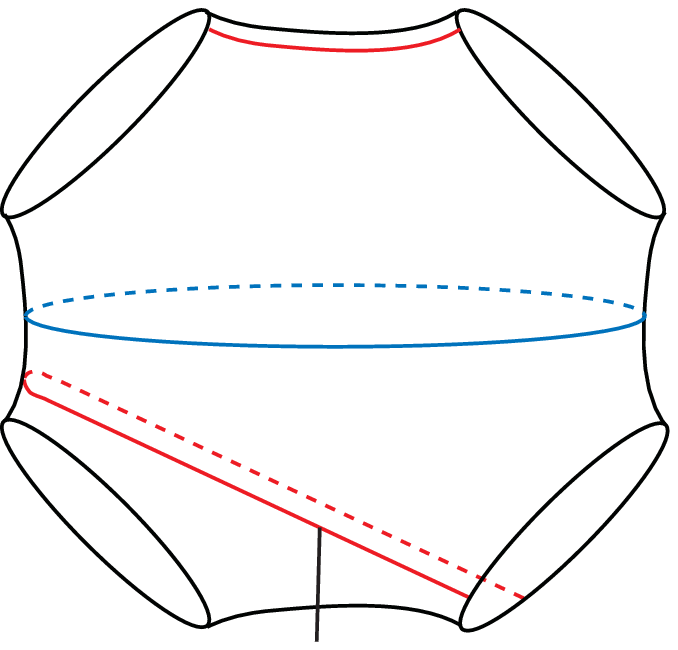}
\linethickness{3pt}
\put(0, 90){$e'^+_1$}
\put(0, 13){$e'^-_1$}
\put(90, 90){$e'^-_2$}
\put(90, 13){$e'^+_2$}
\put(42, 0){\small $y y^{-1}$}
\end{overpic}
\captionof{figure}{There are no arcs of $\Sigma' \cap l$ connecting
$e'^-_{1}$ and $e'^+_{1}$.}
\label{fig:arcs_3}
\end{center}
This implies that the word represented by $l$ cannot contain the term $x^2$, thus, $k \neq 0$.
Further if one of $\epsilon_{l}$, $\delta_1$ and $\delta_{k}$ is $-1$, then
$l$ cannot represent a primitive element of $\pi_1 (W)$
by Lemma \ref{lem:key}.
Thus, we can assume that $\epsilon_1 = \epsilon_l = \delta_1 = \delta_k = 1$.
Let $\alpha$ and $\beta$ be the subarcs of $l$ corresponding to the subwords $w_1$ and $w_2$ respectively.
Then $\Sigma' \cap ( \alpha \cup \beta )$ consits of arcs shown in Figure \ref{fig:arcs_5}.
Let $\alpha_0$ be the subarc of $\alpha$ correspoinding to the word $xy$ and
let $E^*$ be the band sum of $E'_1$ and $E'_2$ along $\alpha_0$.
\begin{center}
\begin{overpic}[width=4cm,clip]{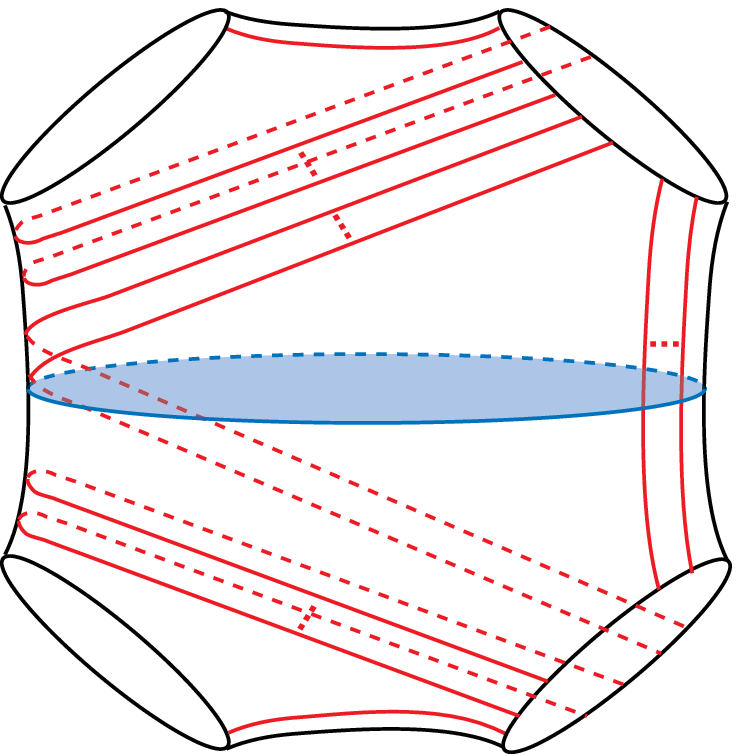}
\linethickness{3pt}
\put(0, 106){$e'^+_1$}
\put(0, 3){$e'^-_1$}
\put(100, 3){$e'^+_2$}
\put(100, 106){$e'^-_2$}
\put(42, 104){\color{red} \small $\alpha_0$}
\put(52, 53){\color{blue} \small $E^*$}
\end{overpic}
\captionof{figure}{The arcs $\Sigma' \cap (\alpha \cup \beta)$ and the disk $E^*$.}
\label{fig:arcs_5}
\end{center}
Then $E_1'$ and $E^*$ form a new complete meridian system of $W$.
Assigning the same symbol $y$ to $\partial E^*$, the arc $\alpha$ determines
a word of the form
$x y^{\epsilon'_{1}} y^{\epsilon'_{2}} \cdots y^{\epsilon'_{l'}}x$ ($\epsilon'_i = \pm 1$),
while $\beta$ determines
$x y^{\delta'_{1}} y^{\delta'_{2}} \cdots y^{\delta'_{k'}}x$ ($\delta'_j = \pm 1$).
Here note that we have $l' < l$, $k' < k$, $\sum_{i=1}^{l'} \epsilon'_{i}= m$ and
$\sum_{j=1}^{k'} \delta'_{j} = n$.
We repeat this argument until both words correspoinding to $\alpha$ and $\beta$ become reduced.
We claim that this process finishes in finitely many times.
Suppose not.
Then after repeating this process
finitely many times, we finally end with the case where
the word correspoinding to one of $\alpha$ and $\beta$ (say $\alpha$) is $x^2$.
Then again by Figure \ref{fig:arcs_3} the word corresponding to $\beta$ must be reduced.
This is a contradiction.
Now the conclusion follows from the argument of the case where both
$w_1$ and $w_2$ are reduced.
\end{proof}

\subsection{Shells}

Let $(V, W; \Sigma)$  be the genus-$2$ Heegaard splitting of a lens space $L = L(p, q)$ with $1 \leq q \leq p/2$.
We briefly review the definition of a {\it shell}, which is a special subcomplex of
the star neighborhood of a vertex of a primitive disk in the non-separating disk complex $\mathcal D(V)$,
introduced in Section 3.3 of \cite{CK15b}.
We call a pair of disjoint, non-isotopic primitive disks in $V$ a {\it primitive pair} in $V$.
A non-separating disk $E_0$ properly embedded in $V$ is said to be {\it semiprimitive} if there is a primitive disk $E'$ in $W$ disjoint from $E_0$.
A primitive pair and a semiprimitive disks in $W$ can be defined in the same way.

Let $E$ be a primitive disk in $V$.
Choose a dual disk $E'$ of $E$.
Then we have unique semiprimitive disks $E_0$ and $E'_0$ in $V$
and $W$ respectively that are disjoint from $E \cup E'$.
We denote the solid torus $\cl(V- \Nbd(E))$ simply by  $V_E$.
By a suitable choice of the oriented longitude and meridian of $V_E$, the circle $\partial E'_0$ can be assumed to be a $(p, \bar{q})$-curve on the boundary of $V_E$,
where $\bar{q} \in \{q, q'\}$ and $q'$ is the unique integer satisfying $1 \leq q' \leq p/2$ and $qq' \equiv \pm 1 \pmod p$.
We say that $E$ is of {\it $(p , \bar{q})$-type} if $\partial E'_0$ is a $(p, \bar{q})$-curve on $\partial V_E$.

Suppose first that $E$ is of $(p , q)$-type.
Then the circle $\partial E_0$ is a $(p, q')$-curve on the boundary of the solid torus $\cl(W- \Nbd(E'))$.
We construct a sequence of disks $E_0$, $E_1 , \ldots , E_p$ starting at the semiprimitive disk $\partial E_0$ as follows.
Choose an arc $\alpha_0$ on $\Sigma$ so that $\alpha_0$ meets each of
$E_0$ and $E$ in exactly one point of $\partial \alpha_0$, and $\alpha_0$ is disjoint from $\partial E' \cup \partial E'_0$.
Let $E_1$ be a disk in $V$ which is the band sum of $E_0$ and $E$ along $\alpha_0$.
Then the disk $E_1$ is disjoint from $E \cup E_0$ and intersects $\partial E'$ in a single point and $\partial E'_0$ in $p$ points.
Then inductively, we construct $E_{i+1}$ from $E_i$ until we get $E_p$ by taking the band sum with $E$ along an
arc $\alpha_i$ on $\Sigma$ such that $\alpha_i$ meets each of
$E_i$ and $E$ in exactly one point of $\partial \alpha_0$, and $\alpha_0$ is disjoint from $\partial E' \cup \partial E'_0$.
Figure \ref{fig:sequence_of_disks} illustrates the case when $E$ is of $(7, 3)$-type, and so $\partial E_0$ is a $(7, 2)$-curve on the boundary of the solid torus $\cl(W- \Nbd(E'))$ in the lens space $L(7, 3)$.

\begin{center}
\begin{overpic}[width=15cm,clip]{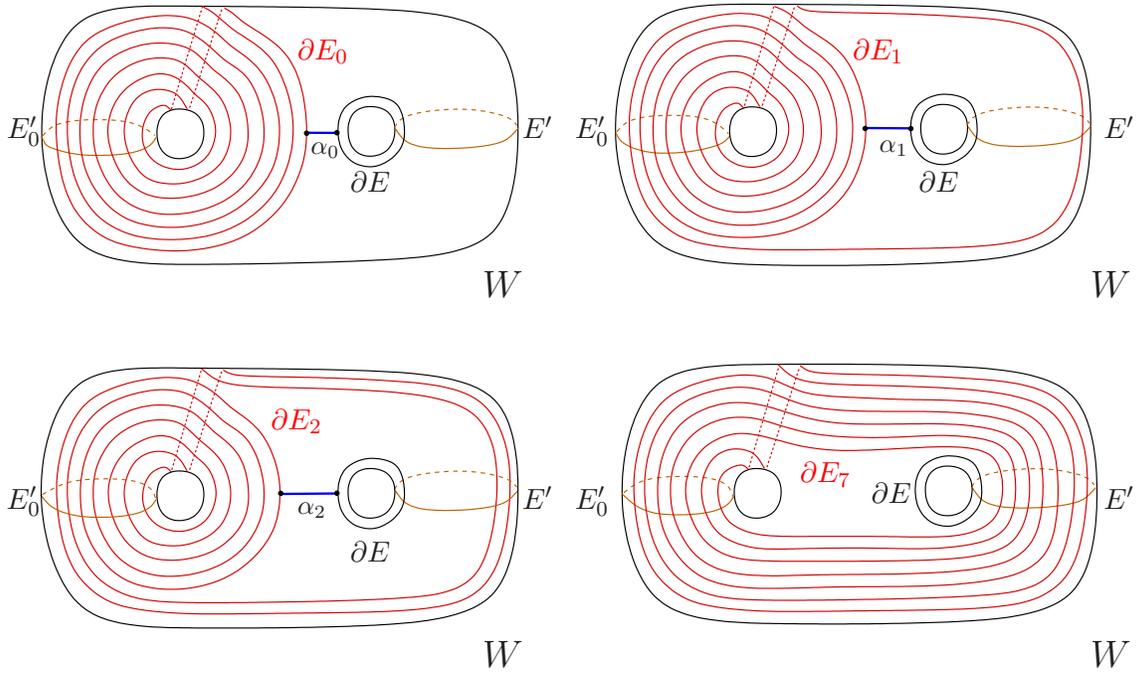}
 \linethickness{3pt}
  \put(0, 210){$E_0'$}
  \put(195, 210){$E'$}
  \put(215, 210){$E_0'$}
  \put(415, 210){$E'$}
  \put(0, 70){$E_0'$}
  \put(195, 70){$E'$}
  \put(215, 70){$E_0'$}
  \put(415, 70){$E'$}
  \put(115, 204){\small $\alpha_0$}
  \put(330, 205){\small $\alpha_1$}
  \put(110, 68){\small $\alpha_2$}
  \put(130, 190){$\partial E$}
  \put(345, 190){$\partial E$}
  \put(130, 50){$\partial E$}
  \put(327, 72){$\partial E$}
  \put(110, 240){{\color{red} $\partial E_0$}}
  \put(320, 240){{\color{red} $\partial E_1$}}
  \put(100, 100){{\color{red} $\partial E_2$}}
  \put(300, 80){{\color{red} $\partial E_7$}}
  \put(180, 150){\Large $W$}
  \put(410, 150){\Large $W$}
  \put(180, 10){\Large $W$}
  \put(410, 10){\Large $W$}
\end{overpic}
\captionof{figure}{The circles $\partial E_0$, $\partial E_1$, $\partial E_2$ and $\partial E_7$ on the surface $\Sigma = \partial W$ in $L(7, 2)$ for the sequence of disks $E_0, E_1, \ldots, E_7$.}
\label{fig:sequence_of_disks}
\end{center}

We call the full subcomplex of $\mathcal D(V)$ spanned by the vertices $E_0, E_1, \ldots, E_p$, and $E$ a {\it $(p, q)$-shell} centered at the primitive disk $E$
and denote it simply by $\mathcal S_E = \{E_0, E_1, \ldots, E_p\}$, see Figure \ref{fig:shell} for example.

\medskip

\begin{center}
\begin{overpic}[width=6cm,clip]{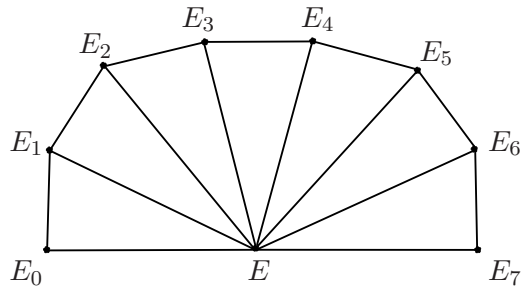}
 \linethickness{3pt}
  \put(80, 4){$E$}
  \put(-10, 4){$E_0$}
  \put(-10, 53){$E_1$}
  \put(16, 90){$E_2$}
  \put(55, 100){$E_3$}
  \put(100, 100){$E_4$}
  \put(145, 87){$E_5$}
  \put(171, 53){$E_6$}
  \put(171, 4){$E_7$}
\end{overpic}
\captionof{figure}{A $(7, 3)$-shell in the disk complex $\mathcal D(V)$ for $L(7, 3)$. The disks $E_1$, $E_2$, $E_5$, $E_6$ and $E$ are the only primitive disks by Lemma \ref{lem:sequence}.}
\label{fig:shell}
\end{center}

A shell is a 2-dimensional subcomplex of $\mathcal{D}(V)$.
It is straightforward from the construction that for $0 \leq i < j \leq p$, $E_i \cap E_j$ consists of $j - i -1$ arcs.
We note that there are infinitely many choice of such an arc $\alpha_0$, and so of the disk $E_1$. But once we choose $E_1$, the arcs $\alpha_i$ for $i \in \{1, 2, \ldots, p-1\}$ are uniquely determined and so are the successive disks $E_2$, $E_3, \ldots, E_p$.

Assign symbols $x$ and $y$ to the oriented circles $\partial E'$ and $\partial E'_0$ respectively.
Then the oriented boundary circles $\partial E$, $\partial E_0$, $\partial E_1, \ldots, \partial  E_p$ from the shell  $\mathcal S_E = \{E_0, E_1, \ldots, E_p\}$ represent elements of the free group $\pi_1 (W) = \langle x, y \rangle$.
We observe that, with a suitable choice of orientation,
the circles $\partial E$, $\partial E_0$, $\partial E_1, \partial E_2, \ldots, \partial E_{p-1}, \partial  E_p$
determine the words of the form $x$, $y^p$, $x y^{q}, xy^qxy^{p-q}, \ldots, (xy)^{p-1}y, (xy)^p$ respectively.

\begin{lemma}[Lemmas 3.8 and 3.13 in \cite{CK15b}]
\label{lem:sequence}
Let $\mathcal S_E = \{E_0, E_1, \ldots, E_{p-1}, E_p\}$ be a $(p, q)$-shell
centered at a primitive disk $E$ in $V$.
Then we have
\begin{enumerate}
\item $E_0$ and $E_p$ are semiprimitive.
\item $E_j$ is primitive if and only if $j \in \{1, q', p-q', p-1\}$ where $q'$ is the unique integer satisfying $qq' \equiv \pm1 \pmod p$ and $1 \leq q' \leq p/2$.
\item $E_1$ and $E_{p-1}$ are of $(p,q)$-type while $E_{q'}$ and $E_{p-q'}$ are of $(p,q')$-type.
\end{enumerate}
\end{lemma}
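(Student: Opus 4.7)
My approach is to translate the geometry of the shell into the free group $\pi_1(W) = \langle x, y \rangle$ via the words read off from the complete meridian system $\{E', E'_0\}$, and then to combine these explicit expressions with Gordon's lemma (Lemma \ref{lem:primitive_element}) and the non-primitivity criteria of Lemmas \ref{lem:property of primitive elements}, \ref{lem:property_of_primitive_elements2}, \ref{lem:key}, \ref{lem:key2}, and \ref{lem:key3}. The first step is to verify by induction on $j$ that $\partial E_j$ represents the word stated in the excerpt: the base case $\partial E_0 = y^p$ follows because $E_0$ is disjoint from $E'$ and $\partial E'_0$ is a $(p,q)$-curve on $\partial V_E$, and each successive band-sum with $\partial E$ along the arc $\alpha_j$, tracked through the four-holed sphere $\Sigma$ cut along $\partial E' \cup \partial E'_0$, inserts one letter $x$ at the correct combinatorial location to yield $\partial E_{j+1}$.

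For part (1), $E_0$ is semiprimitive because the primitive disk $E'\subset W$ is disjoint from $E_0$ by construction. For $E_p$, whose word $(xy)^p$ is a proper $p$-th power and hence non-primitive, I would replace $\{E', E'_0\}$ by a new complete meridian system $\{E'_*, E'_0\}$ of $W$, where $E'_*$ is the band-sum of $E'$ and $E'_0$ along an arc dual to $\alpha_0$; in the new basis $\{X,Y\}=\{xy,y\}$ the boundary $\partial E_p$ reads as $X^p$, hence is disjoint (after isotopy) from the primitive disk $E'_*$, so $E_p$ is semiprimitive.

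For part (2), primitivity of $E_1$ is immediate since $\partial E_1 = xy^q$ belongs to the free basis $\{xy^q, y\}$ of $\pi_1(W)$, and a dual disk is obtained by band-summing $E'$ with $q$ parallel copies of $E'_0$; primitivity of $E_{p-1}$ follows symmetrically in the basis $\{xy, y\}$. Primitivity of $E_{q'}$ and $E_{p-q'}$ is extracted from the relation $qq'\equiv \pm 1\pmod p$ via a sequence of Nielsen basis changes modeled on the Euclidean algorithm for $(p,q)$, which realizes the cyclic word of $\partial E_{q'}$ as a single generator while simultaneously exhibiting a dual primitive disk in $W$. For the converse, when $j \in \{2, \ldots, p-2\}\setminus\{q', p-q'\}$ the word for $\partial E_j$ is a concatenation of blocks $xy^q$ and $xy^{p-q}$ arranged in a non-trivially mixed pattern, so it contains two subwords of the form $xy^{\epsilon_1}\cdots y^{\epsilon_l}x$ and $xy^{\delta_1}\cdots y^{\delta_k}x$ whose exponent sums differ by at least $2$, and Lemma \ref{lem:key3} rules out primitivity. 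The low-complexity cases not covered by this mechanism (for example $|p-2q|=1$) coincide precisely with the indices already on the primitive list.

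Part (3) is then carried out by identifying, for each primitive $E_j$ in the list, the unique semiprimitive disk in $W$ disjoint from the primitive pair $\{E, E_j\}$ and reading off its $(p, \bar{q})$-winding on $\partial \cl(V - \Nbd(E_j))$; for $j \in \{1, p-1\}$ this winding is inherited from $\partial E'_0$ and gives $(p,q)$-type, whereas for $j \in \{q', p-q'\}$ the Euclidean basis change implementing $qq'\equiv \pm 1\pmod p$ converts the $(p,q)$-winding into a $(p,q')$-curve. I expect the main obstacle to be the $j = q'$ case of part (2): proving that the long alternating word for $\partial E_{q'}$ really is primitive and, in the same breath, producing a geometric dual disk, requires carrying out the Nielsen transformations along the continued-fraction expansion of $p/q$ coherently enough that the algebraic basis change is realized at every stage by an actual complete meridian system of $W$.
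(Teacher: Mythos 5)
The paper does not actually prove this lemma; it is imported verbatim from Lemmas 3.8 and 3.13 of \cite{CK15b}, so your outline has to be judged on its own terms. Your general framework --- reading off words in $\pi_1(W)=\langle x,y\rangle$ from a complete meridian system, invoking Gordon's lemma to reduce primitivity of disks to primitivity of elements, and using Osborne--Zieschang together with the non-primitivity criteria --- is the correct one and is indeed the framework of \cite{CK15b}. But two steps do not go through as written. First, the argument for $E_p$ being semiprimitive is wrong on its face: in the basis $\{X,Y\}=\{xy,y\}$ realized by $\{E'_*,E'_0\}$, the word $X^p$ records $p$ crossings with the $X$-disk $E'_*$, so $\partial E_p$ is certainly not disjoint from $E'_*$; the disk the word omits is $E'_0$, which is only semiprimitive in $W$ (its boundary represents a $p$-th power in $\pi_1(V)$), so disjointness from it proves nothing. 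Worse, the algebraic normal form in a new basis does not imply geometric disjointness from the corresponding meridian disk: $|\partial E_p\cap\partial E'_0|=p$ is already bigon-free, hence cannot be isotoped to zero, which shows the geometric word of a minimal-position representative is not the reduced form $X^p$. To prove (1) for $E_p$ you must exhibit a genuinely different primitive disk of $W$ disjoint from $E_p$ (a second dual disk of $E$, for which $E_p$ plays the role that $E_0$ plays for $E'$).

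Second, the core of part (2) is asserted rather than proved. Primitivity of $E_{q'}$ and $E_{p-q'}$ is deferred to an unspecified ``sequence of Nielsen basis changes modeled on the Euclidean algorithm,'' and for the converse you claim that for every $j\in\{2,\ldots,p-2\}\setminus\{q',p-q'\}$ the word of $\partial E_j$ has two $y$-exponents differing by at least $2$; that claim is exactly the arithmetic content of the lemma and needs a proof (the exponents $n_i$ in $xy^{n_1}\cdots xy^{n_j}$ depend on $j$, $p$, $q$ in a floor-function pattern). Note also that the Osborne--Zieschang condition is necessary but not sufficient, so even verifying that all exponents of $\partial E_{q'}$ lie in $\{n,n+1\}$ would not yield primitivity --- you do need the positive construction there, and Lemma \ref{lem:primitive_element} then spares you from building a geometric dual disk. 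Your closing remark that the ``low-complexity cases \ldots coincide precisely with the indices already on the primitive list'' concedes the gap rather than closing it.
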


We have constructed a $(p, q)$-shell $\mathcal S_E$ by assuming $E$ is of a $(p, q)$-type.
If $E$ is of $(p, q')$-type, then $\mathcal S_E$ is a $(p, q')$-shell, and Lemma \ref{lem:sequence} still holds by exchanging $q$ and $q'$ in the conclusion.

\begin{lemma}[Lemma 3.6 in \cite{CK15b}]
\label{lem:first_surgery}
Let $E_0$ be a semiprimitive disk in $V$, and let $E$ be a primitive disk in $V$ disjoint from $E_0$.
If a primitive or semiprimitive disk $D$ in $V$  intersects $E \cup E_0$ transversely and minimally,
then the disk $E_1$ from surgery on $E \cup E_0$ along $D$ is a primitive disk, which has a common dual disk with $E$.
\end{lemma}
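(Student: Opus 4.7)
My strategy is to exhibit a disk $E' \subset W$ that is simultaneously a dual disk of $E$ and of $E_1$. First, since $\{E, E_0\}$ is a cut system of $V$, I choose a dual disk $E'$ of $E$ in $W$ with $E' \cap E_0 = \emptyset$; such an $E'$ exists by extending $\{E, E_0\}$ to a Heegaard diagram with a dual cut system in $W$. It then suffices to show $|\partial E_1 \cap \partial E'| = 1$.

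Next, I unravel the surgery. Let $C$ be an outermost subdisk of $D$ cut off by $D \cap (E \cup E_0)$, so $\partial C = \alpha \cup \beta$ with $\alpha \subset E \cup E_0$ and $\beta \subset \partial D$; by the outermost condition $\beta$ is disjoint from $\partial E \cup \partial E_0$. The arc $\alpha$ lies in exactly one of $E, E_0$, which I call $X$, and divides $\partial X$ into two subarcs $\gamma_1, \gamma_2$. The two disks resulting from the surgery then have boundaries $\partial D_i = \gamma_i \cup \beta$ on $\Sigma$; by construction one of them is isotopic to $E$ or $E_0$ and the other is $E_1$. A bigon-isotopy argument, using minimality of $|D \cap (E \cup E_0)|$ together with essentiality of $\partial E$ and $\partial E_0$ on $\Sigma$, rules out the possibility that the ``trivial'' disk is isotopic to $X$ itself: if $X = E$ the trivial disk must be isotopic to $E_0$, and if $X = E_0$ it must be isotopic to $E$.

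The elementary identity
\[
|\partial D_1 \cap \partial E'| + |\partial D_2 \cap \partial E'| = |\partial X \cap \partial E'| + 2\,|\beta \cap \partial E'|
\]
then reduces the goal $|\partial E_1 \cap \partial E'| = 1$ to showing $|\beta \cap \partial E'| = 0$ when $X = E$ and $|\beta \cap \partial E'| = 1$ when $X = E_0$. This is the main obstacle, and it is where the primitivity or semiprimitivity of $D$ enters. To exploit the hypothesis, I extend $E'$ to a complete meridian system $\{E', E_0'\}$ of $W$ with $E_0'$ primitive and disjoint from $E_0$ (possible since $E_0$ is semiprimitive), and read $\partial D$ as a word $w_D$ in the generators $x = [\partial E']$, $y = [\partial E_0']$ of $\pi_1(W)$. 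The subword of $w_D$ corresponding to $\beta$ is a contiguous block, and $|\beta \cap \partial E'|$ equals the number of $x^{\pm 1}$ letters in that block. A failure of the required count forces $w_D$ to contain one of the forbidden subwords isolated in Lemmas~\ref{lem:property_of_primitive_elements2}, \ref{lem:key}, \ref{lem:key2}, or \ref{lem:key3}, contradicting primitivity or semiprimitivity of $D$. A case analysis across $X \in \{E, E_0\}$, with subcases indexed by the letters bordering $\beta$ in $w_D$, then completes the proof.
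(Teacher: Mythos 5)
First, note that this paper does not actually prove the statement: it is quoted verbatim as Lemma 3.6 of \cite{CK15b}, so there is no in-paper argument to compare yours against. Judged on its own, your outline follows the right family of techniques (outermost subdisk, the dual disk $E'$ of $E$ disjoint from $E_0$, an intersection count, and the non-primitivity word criteria), and the reduction via the identity $|\partial D_1 \cap \partial E'| + |\partial D_2 \cap \partial E'| = |\partial X \cap \partial E'| + 2|\beta \cap \partial E'|$ is sound. But there are two genuine problems. The first is in your setup: a complete meridian system $\{E', E_0'\}$ of $W$ with \emph{both} disks disjoint from $E_0$ cannot exist. If $\partial E_0$ missed a complete meridian system of $W$, it would bound a disk in $W$ (or be parallel to $\partial E'$ or $\partial E_0'$), and the resulting sphere would meet $\Sigma$ in the non-separating curve $\partial E_0$ --- impossible in a lens space, where every sphere separates. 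The disk you actually need is the semiprimitive disk $E_0'$ of $W$ disjoint from $E \cup E'$, which meets $\partial E_0$ in $p$ points; this materially changes how the word of $\partial D$ interacts with the arcs $\gamma_i \subset \partial X$ and $\beta$. (Relatedly, your justification for the existence of $E'$ via ``a dual cut system in $W$'' is wrong for a lens space --- a geometrically dual pair of cut systems forces $\mathbb{S}^3$ --- although the existence and uniqueness of a dual disk of $E$ disjoint from $E_0$ is true and is used elsewhere in the paper.)

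The second and more serious problem is that the decisive step is asserted rather than proved. The sentence ``a failure of the required count forces $w_D$ to contain one of the forbidden subwords'' \emph{is} the content of the lemma, and it is not automatic: to apply Lemmas \ref{lem:property_of_primitive_elements2}, \ref{lem:key}, \ref{lem:key2} and \ref{lem:key3} you must exhibit the forbidden pattern inside a word read off from the actual configuration of $(\partial E' \cup \partial E_0') \cap \Sigma'$ in the $4$-holed sphere $\Sigma'$ obtained by cutting $\Sigma$ along $\partial E \cup \partial E_0$, where $\partial E'$ is a single arc and $\partial E_0'$ is a family of $p$ arcs in a $(p,q)$-pattern. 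One then needs a case analysis over the isotopy classes of the essential arc $\beta$ (which returns to the same boundary circle of $\Sigma'$), in the style of the covering-space argument and Cases 1--4 in the proof of Lemma \ref{lem:words for disks in a bridge}. Nothing in your proposal verifies that every ``bad'' position of $\beta$ --- e.g.\ $\beta$ crossing $\partial E'$ three times in the case $X = E_0$, which is consistent with the parity constraint --- actually produces one of the listed forbidden subwords. Until that case analysis is carried out (with the corrected $E_0'$), the proof is not complete.
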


By Lemma \ref{lem:first_surgery}, given a primitive disk $E$ and a semiprimitive disk $E_0$ in $V$ disjoint from $E$, any primitive disk $D$ in $V$ determines a unique shell $\mathcal S_E = \{E_0, E_1, \ldots, E_{p-1}, E_p\}$ such that $E_1 = D$ if $D$ is disjoint from $E \cup E_0$, and $E_1$ is the disk from surgery on $E \cup E_0$ along $D$ if $D$ intersects $E \cup E_0$.
The following is a generalization of Lemma \ref{lem:first_surgery}.

\begin{lemma}[Lemma 3.10 in \cite{CK15b}]
\label{lem:surgery_on_primitive}
Let $\mathcal S_E = \{E_0, E_1, \ldots, E_{p-1}, E_p\}$ be a shell centered at a primitive disk $E$ in $V$, and let $D$ be a primitive or semiprimitive disk in $V$.
For $j \in \{1, 2, \ldots, p-1\}$,
\begin{enumerate}
\item if $D$ is disjoint from $E \cup E_j$ and is isotopic to none of $E$ and $E_j$, then $D$ is isotopic to either $E_{j-1}$ or $E_{j+1}$, and
\item if $D$ intersects $E \cup E_j$, then the disk from surgery on $E \cup E_j$ along $D$ is isotopic to either $E_{j-1}$ or $E_{j+1}$.
\end{enumerate}
\end{lemma}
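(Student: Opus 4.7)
My plan is to reduce both parts to a classification of essential disks in a solid torus, prove part~(1) by eliminating every candidate other than $E_{j\pm1}$ with the word obstructions of Section~2, and then derive part~(2) from part~(1) via a surgery tracking argument.

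\emph{Solid-torus decomposition.} Because $E_{j-1}$ and $E_{j+1}$ are essential non-separating disks in $V$ disjoint from $E\cup E_j$ and isotopic to neither $E$ nor $E_j$, the pair $\{E,E_j\}$ fails to be a cut system, so $V_{0}:=\cl(V\setminus\Nbd(E\cup E_j))$ is a solid torus. Every essential disk of $V$ disjoint from $E\cup E_j$ and not isotopic to $E$ or $E_j$ is then a meridian of $V_0$, and such meridians, viewed inside $V$, form a countable family $\{D_n\}_{n\in\Integer}$ in which consecutive members differ by a Dehn twist along a co-core annulus of $\Nbd(E)$ in $V_0$. The shell construction pins down two consecutive indices $n_\pm$ with $E_{j-1}=D_{n_-}$ and $E_{j+1}=D_{n_+}$.

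\emph{Part~(1).} Fix a dual disk $E'\subset W$ of $E$ and the semiprimitive disk $E_0'\subset W$ disjoint from $E\cup E'$, giving the presentation $\pi_1(W)=\langle x,y\rangle$ with $x=[\partial E']$ and $y=[\partial E_0']$. Using the shell recurrences, each meridian $D_n$ has a computable cyclic word $w_n$ obtained by applying the $n$-th power of the relevant Dehn-twist operator on $\langle x,y\rangle$ to the words of $\partial E_{j-1}$ and $\partial E_{j+1}$. For every $n\notin\{n_-,n_+\}$ a direct check shows $w_n$ contains a subword of one of the forms forbidden by Lemmas~\ref{lem:property of primitive elements}, \ref{lem:property_of_primitive_elements2}, \ref{lem:key}, \ref{lem:key2}, or \ref{lem:key3}; by Lemma~\ref{lem:primitive_element}, $D_n$ is then neither primitive nor semiprimitive, and part~(1) follows.

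\emph{Part~(2).} Let $C$ be an outermost subdisk of $D$ cut off by an arc $\alpha\subset D\cap(E\cup E_j)$, and let $D^*$ be the disk from surgery on $E\cup E_j$ along $D$. By construction $D^*$ is disjoint from $E\cup E_j$ and isotopic to neither, so $D^*=D_n$ for some $n$ by the classification. A band-sum tracking argument expresses the cyclic word $w(\partial D^*)$ as the result of excising the segment of $w(\partial D)$ running through $\alpha$ together with its two adjacent arcs of $\partial D\cap\partial(E\cup E_j)$ and splicing in a subarc of $\partial E_j$ (or $\partial E$). Since $\partial D$ represents a primitive element, inspection of this splice shows every forbidden subword pattern is avoided in $w(\partial D^*)$; by the elimination carried out in part~(1), this forces $n\in\{n_-,n_+\}$ and hence $D^*\in\{E_{j-1},E_{j+1}\}$.

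\emph{Main obstacle.} The technical heart is the combinatorial verification in part~(1): producing a closed-form description of the family $\{w_n\}$ and checking, for every $n\notin\{n_-,n_+\}$, that at least one forbidden pattern from the five obstruction lemmas occurs. An induction on $|n-n_\pm|$ that reduces to a finite collection of base cases (handled by Lemmas~\ref{lem:key2} and \ref{lem:key3}) is the cleanest organization. A secondary subtlety in part~(2) is the case $\alpha\subset E$, which requires a symmetric word analysis and must be treated in parallel with the case $\alpha\subset E_j$, particularly for the boundary values $j\in\{1,p-1\}$.
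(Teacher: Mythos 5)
There is no proof of this lemma in the present paper (it is imported as Lemma 3.10 of \cite{CK15b}), so I can only judge your argument on its own terms, and it has genuine gaps. The most concrete one is your opening structural claim: you assert that because $E_{j\pm1}$ are essential non-separating disks disjoint from $E\cup E_j$, the pair $\{E,E_j\}$ ``fails to be a cut system'' and $\cl(V\setminus\Nbd(E\cup E_j))$ is a solid torus. This is backwards. For every $j$ the pair $\{E,E_j\}$ \emph{is} a complete meridian system --- the paper itself uses that $\{E,E_m,E_{m+1}\}$ cuts $V$ into two $3$-balls, so $E\cup E_j$ cuts $V$ into a single $3$-ball --- and the existence of further non-separating disks disjoint from both is no obstruction: such disks are exactly the band sums of $E$ and $E_j$, i.e.\ disks in the four-holed ball separating the four scars two-and-two. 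The candidates therefore do not form a single $\Integer$-family of ``meridians of a solid torus'' related by one Dehn twist; they form (at least) two twist-families corresponding to the two nontrivial pairings of the scars, and your indexing $\{D_n\}$ with ``two consecutive indices $n_\pm$'' is not established. On top of this, the entire content of part~(1) --- that every candidate other than $E_{j\pm1}$ violates one of the word obstructions --- is only asserted (``a direct check shows''), with no closed form for $w_n$ and no verification.

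Part~(2) has a logical flaw independent of the above. You argue that the spliced word of the surgered disk $D^*$ ``avoids every forbidden subword pattern,'' and conclude $D^*\in\{E_{j-1},E_{j+1}\}$ by the elimination of part~(1). But for generic $j$ the disks $E_{j\pm1}$ are themselves neither primitive nor semiprimitive, and their boundary words $xy^{n_1}x y^{n_2}\cdots xy^{n_k}$ \emph{do} contain the configurations forbidden by Lemma~\ref{lem:property_of_primitive_elements2} --- that is precisely how Lemma~\ref{lem:sequence} detects which $E_j$ are primitive. So the premise of your deduction is false for the very disks you are trying to land on, and ``avoiding all forbidden patterns'' cannot be the criterion that singles out $E_{j\pm1}$ among the disks disjoint from $E\cup E_j$. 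The workable strategy, which is the one this paper actually carries out in the analogous arguments (Claim~2 in the proof of Lemma~\ref{lem:principal pair of a bridge}, and the case analysis in Lemma~\ref{lem:words for disks in a bridge}), is the reverse: assume the disk obtained from surgery is some band sum other than $E_{j\pm1}$, and show that then the word of $\partial D$ itself --- the disk known to be primitive or semiprimitive, hence representing (a power of) a primitive element --- must contain a forbidden configuration, contradicting Lemmas~\ref{lem:property_of_primitive_elements2}--\ref{lem:key3}. As written, your proposal applies the obstruction lemmas to the wrong disk and leaves the decisive combinatorial verification unperformed.
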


\subsection{Bridges}
Let $(V, W; \Sigma)$  be the genus-$2$ Heegaard splitting of a lens space $L = L(p, q)$ with $1 \leq q \leq p/2$ and $p \not\equiv \pm 1 \pmod q$.
Throughout the subsection, we fix the followings.
\begin{itemize}
\item
An integer $\bar{q} \in \{q, q' \}$ where $q'$ is the unique integer satisfying $1 \leq q' \leq p/2$ and $qq' \equiv \pm 1 \pmod p$; and
\item
The integers $m$ and $r$ satisfying $p=\bar{q} m + r$ with $2 \leq r \leq q-2$.
\end{itemize}

We recall from Theorem \ref{thm:contractibility} that $\mathcal P(V)$ consists of infinitely many isomorphic tree components in this case.
The key of the disconnectivity is that we can find non-adjacent primitive disks
$D$ and $E$ in $V$ such that
the corridor connecting $D$ and $E$ contains no vertices of primitive disks except $D$ and $E$.
Then $D$ and $E$ are contained in different components of $\mathcal{P}(V)$
since the dual complex of $\mathcal{D}(V)$ is a tree.

We call a corridor $\mathcal{C}_{\{ D, E \} } = \{  \Delta_1 , \Delta_2 , \ldots, \Delta_n \}$ in $\mathcal{D}(V)$ a {\it bridge} when it connects vertices $D$ and $E$ of primitive disks, and contains no vertices of primitive disks except $D$ and $E$.
In this case, we denote the bridge by $\mathcal B_{\{D, E\}}$ instead of $\mathcal{C}_{\{ D, E \} }$.
We recall that the two vertices of the $2$-simplex $\Delta_1$ other than $E$ forms the principal pair $\{E_*, E_{**}\}$ of $E$ with respect to $D$ (Lemma \ref{lem:principal pair of a corridor}).

\begin{lemma}[Lemma 4.1 and Theorem 4.2 in \cite{CK15b}]
\label{lem:construction of a bridge}
Let $E$ be a primitive disk of $(p,\bar{q})$-type.
Let $\mathcal S_E = \{E_0, E_1, \ldots, E_p\}$ be a $(p, \bar{q})$-shell centered at $E$.
Then there exists a primitive disk $D$ and a bridge $\mathcal{B}_{\{ D , E \}} = \{ \Delta_1 , \Delta_2 , \ldots , \Delta_n \}$
connecting $E$ and $D$ such that $\Delta_1 = \{ E, E_m , E_{m+1} \}$.
\end{lemma}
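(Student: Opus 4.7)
The plan is to build the bridge explicitly, adding one $2$-simplex at a time starting from $\Delta_1 = \{E, E_m, E_{m+1}\}$. First I verify that $\Delta_1$ is indeed a $2$-simplex of $\mathcal D(V)$: the three disks $E$, $E_m$, $E_{m+1}$ are mutually disjoint by the construction of the shell $\mathcal S_E$. The hypothesis $2 \leq r \leq \bar{q} - 2$, i.e.\ $p = \bar{q}m + r$ with $r$ in this range, gives $2 \leq m \leq p-2$; a brief arithmetic check using $\bar{q}\,q' \equiv \pm 1 \pmod p$ further rules out $m, m+1 \in \{1, q', p-q', p-1\}$. By Lemma \ref{lem:sequence}(2) the two new vertices $E_m$ and $E_{m+1}$ of $\Delta_1$ are then non-primitive, which matches the bridge requirement that only the two endpoints $E$ and $D$ be primitive.

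I next extend the corridor by pivoting across non-$E$ edges. Given $\Delta_k = \{F_{k-1}, G_k, G_k'\}$, I choose a new vertex $F_k$ disjoint from $G_k \cup G_k'$ and not isotopic to $F_{k-1}$, realized as an explicit band sum of $G_k$ and $G_k'$ along a specific arc on $\Sigma$. By Lemma \ref{lem:two corridors}, this is equivalent to fixing a target primitive disk $D$ in advance and defining $F_k$ recursively as the disk from surgery on $G_k \cup G_k'$ along $D$; the resulting subcomplex is then automatically the corridor $\mathcal C_{\{D,E\}}$. It therefore suffices to identify a primitive $D$ for which the induced sequence of pivots terminates and avoids primitive disks in between.

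Along the way I track the boundary word of each $F_k$ in $\pi_1(W) = \langle x, y \rangle$ with the labeling inherited from the shell. Each band sum alters the word in a controlled, predictable way, so one can check inductively that every intermediate $F_k$ has a boundary word falling into one of the forbidden patterns of Lemmas \ref{lem:property_of_primitive_elements2}, \ref{lem:key}, \ref{lem:key2}, or \ref{lem:key3} --- typically a pair of blocks $x y^{\varepsilon_1}\cdots y^{\varepsilon_l} x$ whose signed $y$-exponent sums differ by at least $2$, invoking Lemma \ref{lem:key3}. The assumption $2 \leq r \leq \bar{q} - 2$ is precisely what prevents these sums from collapsing into a primitive Osborne--Zieschang pattern prematurely. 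After finitely many pivots the word does drop into the primitive form of Lemma \ref{lem:property of primitive elements}, and the corresponding disk is the desired primitive endpoint $D$.

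The main obstacle, in my view, is not the construction itself but the word-level bookkeeping in the inductive step: one must explicitly record the boundary word of each intermediate $F_k$ and decide at every stage which of the four non-primitiveness criteria applies, since they cover slightly different regimes of exponent patterns. This case analysis is unavoidable, but tractable because, given the shell structure and Osborne--Zieschang, the forbidden patterns collectively exhaust all non-primitive words that can arise from successive band sums along this corridor.
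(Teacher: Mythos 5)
Your overall strategy---start from $\Delta_1 = \{E, E_m, E_{m+1}\}$, extend the corridor one $2$-simplex at a time by band sums, track the boundary words in $\pi_1(W) = \langle x,y\rangle$, and certify non-primitiveness of the intermediate vertices until a primitive word appears---is the same as the paper's, and your preliminary check that $E_m$ and $E_{m+1}$ are non-primitive is correct. But there is a genuine gap at the heart of the argument: you never establish that some finite sequence of pivots actually terminates at a primitive disk. The lemma is an existence statement for $D$, and the sentence ``after finitely many pivots the word does drop into the primitive form'' is exactly what has to be proved; as written it is an assertion. The paper closes this gap with two concrete ingredients that are absent from your proposal. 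First, it organizes the pivots into a binary tree of $L$- and $R$-replacements (the ``principal complex'') and computes the exact effect of a pivot on the boundary word: if the two disks of the current edge carry the words $(xy^{\bar q})^{m_1}xy^{n_1}$ and $(xy^{\bar q})^{m_2}xy^{n_2}$, the new disk carries $(xy^{\bar q})^{m_1+m_2+1}xy^{n_1+n_2-\bar q}$. Starting from the exponents $\bar q+r$ and $r$, this reduces the whole problem to the arithmetic of the integers $n_w$. Second, it uses an elementary number-theoretic argument (resting on $\gcd(r,\bar q)=1$, which follows from $\gcd(p,\bar q)=1$) to show that some finite $L/R$-word $w$ attains $n_w=\bar q\pm 1$, which is precisely the Osborne--Zieschang primitive form; taking $w$ of minimal length then guarantees that every intermediate vertex is non-primitive, so the resulting corridor is a bridge. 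Without the explicit recursion there is nothing to do the ``word-level bookkeeping'' on, and without the number-theoretic step there is no $D$.

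A secondary point: you propose to rule out primitivity of the intermediate disks via Lemmas \ref{lem:property_of_primitive_elements2}, \ref{lem:key}, \ref{lem:key2} and \ref{lem:key3}, but for the words that actually occur in this construction, namely $(xy^{\bar q})^k xy^{n}$ with $n\neq \bar q\pm 1$, non-primitiveness follows directly from Lemma \ref{lem:property of primitive elements} once the recursion above is in hand; the heavier Lemmas \ref{lem:key2} and \ref{lem:key3} are what the paper needs for the uniqueness statements about bridges (Lemmas \ref{lem:principal pair of a bridge} and \ref{lem:words for disks in a bridge}), not for this existence statement. This does not invalidate your plan, but it indicates that the difficulty is not in the case analysis you anticipate; it is in producing the pivot sequence and proving it reaches a primitive disk.
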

\begin{proof}
Actually, the proof of Theorem 4.2 in \cite{CK15b} introduces an algorithmic way to produce a bridge under the condition $p \not\equiv \pm 1 \pmod q$.
We provide a sketch of the proof.
Assigning symbols $x$ and $y$ to $\partial E'$ and $\partial E'_0$ respectively, with
appropriate orientations,
any oriented simple closed curve on $\Sigma$ represents an element of the free group $\pi_1 (W) = \langle x, y \rangle$.
Then we may assume that $\partial E_m$ and $\partial E_{m+1}$ represents the elements
$(xy^{\bar{q}})^{m-1} x y^{\bar{q}+r}$ and $(xy^{\bar{q}})^{m} x y^{r}$ respectively.
By cutting $\Sigma = \partial V$ along $\partial E_m \cup \partial E_{m+1}$, we get a $4$-holed sphere $\Sigma_*$ shown in Figure \ref{fig:L_or_R-replacement}.
In the figure, $D_1 = E_m$ and $D_2 = E_{m+1}$, and $m_1 = m -1$ and $m_2 = m$.
We have two possibilities of the patterns of $\partial E' \cap \Sigma_*$ as in the figure.

\begin{center}
\begin{overpic}[width=15cm,clip]{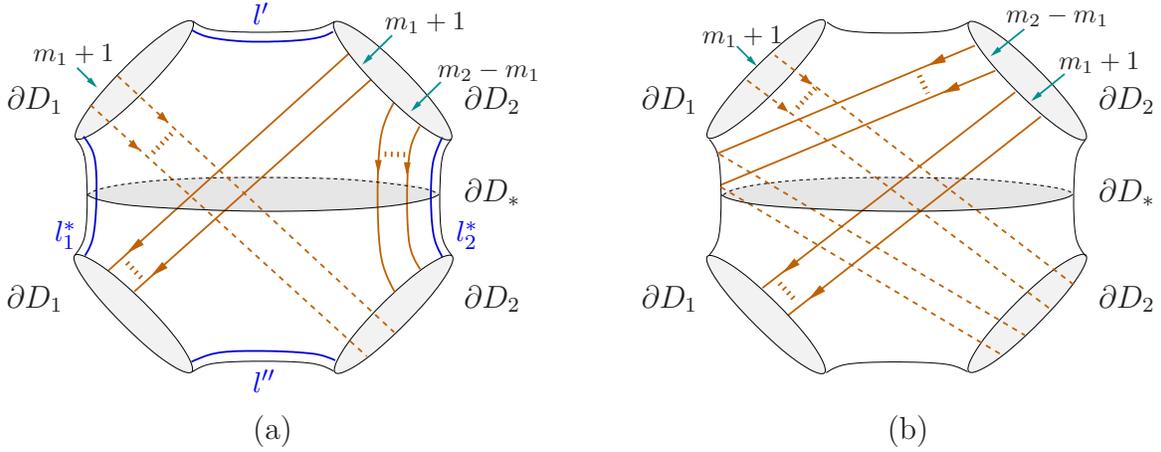}
 \linethickness{3pt}
  \put(7, 143){\small $m_1 + 1$}
  \put(140, 155){\small $m_1 + 1$}
  \put(160, 137){\small $m_2 - m_1$}
  \put(260, 150){\small $m_1 + 1$}
  \put(395, 140){\small $m_1 + 1$}
  \put(375, 158){\small $m_2 - m_1$}
  \put(90, 157) {${\color{blue}l'}$}
  \put(90, 18) {${\color{blue}l''}$}
  \put(15, 75) {${\color{blue}l^*_1}$}
  \put(167, 75) {${\color{blue}l^*_2}$}
  \put(-3, 125){\large$\partial D_1$}
  \put(170, 125){\large $\partial D_2$}
  \put(237, 125){\large $\partial D_1$}
  \put(410, 125){\large $\partial D_2$}
  \put(-3, 50){\large $\partial D_1$}
  \put(170, 50){\large $\partial D_2$}
  \put(237, 50){\large $\partial D_1$}
  \put(410, 50){\large $\partial D_2$}
  \put(170, 90){\large $\partial D_*$}
  \put(410, 90){\large $\partial D_*$}
  \put(90, 0){\large (a)}
  \put(330, 0){\large (b)}
\end{overpic}
\captionof{figure}{The $4$-holed sphere $\Sigma_*$. There are two patterns of $\partial E' \cap \Sigma_*$.}
\label{fig:L_or_R-replacement}
\end{center}

For each of the two cases, we can show that the boundary circle of the horizontal disk $E_\emptyset$, which is $\partial D_*$ in the figure,
represents the word $(xy^{\bar{q}})^{2m} x y^{2r}$.
We call $\{ E_m , E_{\emptyset} \}$ ($\{ E_{\emptyset} , E_{m+1} \}$, respectively)
the pair obtained from the pair $\{ E_m ,  E_{m+1} \}$ by
{\it $R$-replacement} ({\it $L$-replacement}, respectively).
By cutting $\partial V$ along $ \partial E_m \cup \partial E_{\emptyset} $ ($ \partial E_{\emptyset} \cup \partial E_{m+1}$, respectively)
we get again a $4$-holed sphere, denoted by $\Sigma_*$ again, and the two possibilities of the patterns of $\partial E' \cap \Sigma_*$
shown in Figure \ref{fig:L_or_R-replacement}.
We denote by $E_R$ ($E_L$, respectively) the horizontal disk in the figure.
The boundary of $E_R$ ($E_L$, respectively) represents the word $(xy^{\bar{q}})^{3m} x y^{3r}$
($(xy^{\bar{q}})^{3m+1} x y^{3r - \bar{q}}$, respectively).
In the figure, $D_* = E_R$, $D_1 = E_m$, $D_2 = E_{\emptyset}$, $m_1 = m -1$ and $m_2 = 2m$.
($D_* = E_L$, $D_1 = E_{\emptyset}$, $D_2 = E_{m+1}$, $m_1 = 2m$ and $m_2 = m$, respectively.)
We call $\{ E_m , E_R \}$ and $\{ E_R , E_{\emptyset} \}$ ($\{ E_{\emptyset} , E_L \}$ and $\{ E_L , E_{m+1} \}$, respectively)
the pair obtained from the pair $\{ E_m , E_{\emptyset} \}$ ($\{ E_{\emptyset} , E_{m+1} \}$, respectively) by
$R$-replacement and $L$-replacement respectively.
In this way, for each positive word $w = w(L, R)$ on the set of letters $\{ L, R \}$,
we can define the disk $E_w$ inductively.
We call the full subcomplex $\mathcal{X}$ of the disk complex $\mathcal{D}(V)$
spanned by the set of vertices
\[\{ E, E_m , E_{m+1} \} \cup \{ E_w \mid \mbox{$w$ is a (possibly empty) positive word
in $\{ L, R \}$}  \}\]
the {\it principal complex generated by} $\mathcal{S}_E$.
See Figure \ref{fig:principal_complex}.

\begin{center}
\begin{overpic}[width=11cm,clip]{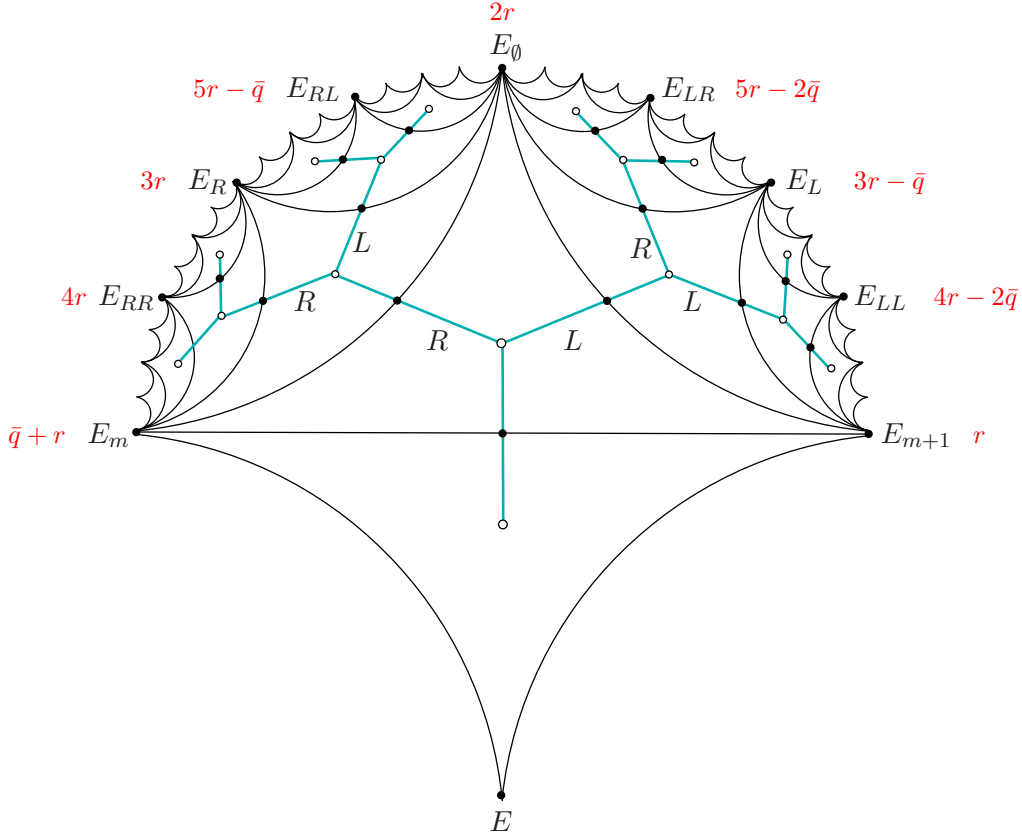}
 \linethickness{3pt}
  \put(152, 8){$E$}
  \put(0, 155){$E_m$}
  \put(300, 155){$E_{m+1}$}
  \put(4, 207){$E_{RR}$}
  \put(290, 207){$E_{LL}$}
  \put(38, 251){$E_R$}
  \put(264, 251){$E_L$}
  \put(75, 286){$E_{RL}$}
  \put(217, 286){$E_{LR}$}
  \put(152, 302){$E_\emptyset$}
  \put(128, 190){$R$}
  \put(78, 205){$R$}
  \put(180, 190){$L$}
  \put(225, 205){$L$}
  \put(100, 227){$L$}
  \put(205, 225){$R$}

  \put(-30, 155){\color{red} \small $\bar{q}+r$}
  \put(335, 155){\color{red} \small  $r$}
  \put(-10, 207){\color{red} \small  $4r$}
  \put(320, 207){\color{red} \small  $4r-2\bar{q}$}
  \put(20, 251){\color{red} \small  $3r$}
  \put(290, 251){\color{red} \small  $3r-\bar{q}$}
  \put(40, 286){\color{red} \small  $5r-\bar{q}$}
  \put(245, 286){\color{red} \small  $5r-2 \bar{q}$}
  \put(152, 315){\color{red} \small  $2r$}

\end{overpic}
\captionof{figure}{The principal complex generated by $\mathcal{S}_E$.}
\label{fig:principal_complex}
\end{center}

The words in
$\{ x^{\pm 1}, y^{\pm 1} \}$ represented by vertices of
the principal complex $\mathcal{X}$ has the following property:
\begin{itemize}
\item
Let $\{D_1, D_2\}$ be a pair of vertices in $\mathcal X$ obtained from $\{ E_m, E_{m+1} \}$ by a series of replacements, and let $D_*$ be the vertex such that $\{D_1, D_*\}$ and $\{D_*, D_2\}$ are the $R$-replacement and $L$-replacement of $\{D_1, D_2\}$ respectively.
If $\partial D_1$ and $\partial D_2$ represent the words $(xy^{\bar{q}})^{m_1} x y^{n_1}$ and
$(xy^{\bar{q}})^{m_2} x y^{n_2}$ respectively,
then the word represented by
$\partial D_*$ is
$(xy^{\bar{q}})^{m_1 + m_2 + 1} x y^{n_1 + n_2 - \bar{q}}$.
See Figure \ref{fig:L_or_R-replacement}.
\end{itemize}
Using this property, we can find inductively the word
corresponding to each vertex in $\mathcal{X}$.
Further, we see that  every word corresponding to
a vertex $E_w$ in $\mathcal{X}$ is a positive power of
$x y^{\bar{q}}$ followed by $x y^{n_w}$ for some non-zero integer
$n_w$.
In Figure \ref{fig:principal_complex}, this number $n_w$ is assigned for each vertex.

By an elementary argument on number theory,
we can show that there exists a vertex $E_w$ in $\mathcal{X}$
such that $n_w =  \bar{q} \pm 1$.
This implies that $E_w$ is a primitive disk.
Choose such $w$ so that
the length of $w$ is minimal, and set $D = E_w$.
Then the corridor connecting $E$ and $D$
is the required bridge.
(Here we recall that neither of $E_m$ and $E_{m+1}$ is primitive.)
\end{proof}

\begin{lemma}
\label{lem:principal pair of a bridge}
Let $\mathcal{B}_{ \{ D, E \} }$ be a bridge connecting vertices $D$ and $E$ of primitive disks.
Let $\{ E_{*} , E_{**} \}$ be the principal pair of $E$ with respect to $D$.
Let $E$ be of $(p , \bar{q})$-type.
Then there exists a unique
$(p, \bar{q})$-shell $\mathcal S_E = \{E_0, E_1, \ldots, E_p\}$ centered at
$E$ such that $\{ E_* , E_{**} \} = \{ E_m , E_{m+1} \}$.
\end{lemma}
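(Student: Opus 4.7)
I would prove existence and uniqueness of the shell separately, both by working with the word representations in $\pi_1(W)$ used in the proof of Lemma \ref{lem:construction of a bridge}.

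First I would set up the generators: choose a dual disk $E'$ of $E$ in $W$, and let $E_0 \subset V$, $E_0' \subset W$ be the unique semiprimitive disks disjoint from $E \cup E'$. With suitable orientations, set $x = [\partial E']$ and $y = [\partial E_0']$ so that $\pi_1(W) = \langle x, y\rangle$. By Lemma \ref{lem:principal pair of a corridor}, $\{E_*, E_{**}\}$ is the pair of disks from surgery on $E$ along $D$; in particular both are disjoint from $E$, and neither is primitive, since the bridge $\mathcal{B}_{\{D,E\}}$ contains no primitive vertex other than $D$ and $E$. For existence, I recall from the proof of Lemma \ref{lem:construction of a bridge} that for a suitable choice of $E_1$ completing $(E_0, E)$ to a $(p,\bar{q})$-shell $\mathcal{S}_E = \{E_0, E_1, \ldots, E_p\}$, the curves $\partial E_m$ and $\partial E_{m+1}$ represent the words $(xy^{\bar{q}})^{m-1} x y^{\bar{q} + r}$ and $(xy^{\bar{q}})^{m} x y^{r}$ respectively. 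The main step is then to show that $\partial E_*$ and $\partial E_{**}$ must represent exactly these two words, up to interchange. This uses three ingredients: the disjointness from $E$ constrains the corresponding words to contain only one sign of $x$; the non-primitivity of $E_*$ and $E_{**}$ rules out a large class of word shapes via Lemmas \ref{lem:property_of_primitive_elements2}, \ref{lem:key}, \ref{lem:key2}, and \ref{lem:key3}; and the fact that $\{E_*, E_{**}\}$ arises as surgery on $E$ along a primitive disk $D$ constrains the exponent pattern between consecutive occurrences of $x$. Once the words are matched, one chooses $E_1$ accordingly to produce the desired shell.

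For uniqueness, suppose $\mathcal{S}_E = \{E_0, \ldots, E_p\}$ and $\widetilde{\mathcal{S}}_E = \{\widetilde{E}_0, \ldots, \widetilde{E}_p\}$ are two $(p,\bar{q})$-shells centered at $E$ with $\{E_m, E_{m+1}\} = \{\widetilde{E}_m, \widetilde{E}_{m+1}\} = \{E_*, E_{**}\}$. A shell is determined by the initial data $(E_0, E_1)$, and from the pair $\{E_m, E_{m+1}\}$ together with $E$ I would recover the remaining disks by iterating the inverse band-sum construction: by Lemma \ref{lem:surgery_on_primitive}, the disk $E_{j-1}$ is precisely the disk from surgery on $E \cup E_j$ along $E_{j+1}$ (and symmetrically in the other direction), so starting from $E_m, E_{m+1}$ one reconstructs $E_{m-1}, \ldots, E_0$ and $E_{m+2}, \ldots, E_p$ uniquely, identifying $\mathcal{S}_E$ with $\widetilde{\mathcal{S}}_E$.

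The hard part will be the word-matching step in existence: systematically excluding all cyclically reduced word forms for $\partial E_*$ and $\partial E_{**}$ other than the target ones, using the primitive-element criteria in combination with the surgery structure. This parallels, and in effect inverts, the algebraic argument in the proof of Lemma \ref{lem:construction of a bridge}, which locates the primitive disk $D = E_w$ inside the principal complex of a shell; here one instead characterizes the unique principal pair giving rise to a prescribed bridge.
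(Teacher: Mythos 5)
Your uniqueness argument is fine and matches what the paper leaves implicit: by Lemma \ref{lem:surgery_on_primitive} consecutive disks of a shell determine their neighbours, so the pair $\{E_m,E_{m+1}\}$ together with $E$ reconstructs the whole shell. The existence half, however, has a genuine gap. The paper does not try to compute the words of $\partial E_*$ and $\partial E_{**}$ at all. It takes an outermost subdisk $C$ of $D$ cut off by $D\cap E$, chooses the dual disk $E'$ (hence $E_0$) so as to \emph{minimize} $|C\cap E_0|$, builds the shell from the surgery of $E\cup E_0$ along $C$ via Lemma \ref{lem:first_surgery}, and then shows that the first shell disk $E_j$ disjoint from $C$ satisfies $j=m$; every excluded value of $j$ is ruled out by exhibiting a forbidden subword in $\partial D$ and contradicting the \emph{primitivity of $D$} via Lemmas \ref{lem:property_of_primitive_elements2} and \ref{lem:key3}. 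Your plan omits both of these load-bearing devices: the minimality choice of $E_0$ (which is what forces $j<p/2$ and, more basically, fixes the basis $\{x,y\}$ in which any word computation could even be meaningful) and the transfer of the word constraint from the shell disks to $\partial D$.

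The specific step that would fail is your second ``ingredient'': Lemmas \ref{lem:property_of_primitive_elements2}, \ref{lem:key}, \ref{lem:key2} and \ref{lem:key3} are all \emph{sufficient conditions for non-primitivity}, so they cannot rule out any word shape for $E_*$ and $E_{**}$, which are already known to be non-primitive. Indeed, non-primitivity alone cannot distinguish $\{E_m,E_{m+1}\}$ from other pairs $\{E_j,E_{j+1}\}$ of non-primitive shell disks disjoint from $E$ (there are many such pairs when $j\notin\{1,q',p-q',p-1\}$). The only way to single out the index $m$ is to use that the pair arises by surgery along the \emph{primitive} disk $D$, i.e., to read a subword of $\partial D$ off from $\partial E_{j+1}$ and invoke the non-primitivity criteria against $\partial D$ --- which is exactly the paper's Claim 2. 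Your third ``ingredient'' gestures at this, but without the outermost-subdisk setup and the minimality of $|C\cap E_0|$ there is no well-defined subword to extract, and the argument as written is circular (the words of $E_*,E_{**}$ are only in the asserted normal form with respect to the very shell you are trying to construct).
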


\begin{proof}
The proof will be similar to that of Theorem 3.11 in \cite{CK15b}.
Let $C$ be an outermost subdisk of $D$ cut off by $D \cap E$.
Any dual disk $E'$ of $E$ determines a unique semiprimitive disk $E_0$ in $V$ disjoint from $E \cup E'$.
Among all the dual disks of $E$, choose one, denoted by $E'$ again, so that the resulting semiprimitive disk $E_0$ intersects $C$ minimally.
Then, $C$ must intersect $E_0$, otherwise one of the disks of the principal pair $\{E_*, E_{**}\}$ is $E_0$ and the other is primitive by Lemma \ref{lem:first_surgery}, which is impossible since they are the representatives of the vertices of the bridge $\mathcal{B}_{ \{ D, E \} }$ other than $D$ and $E$.

Let $C_0$ be an outermost subdisk of $C$ cut off by $C \cap E_0$.
Then one of the disks from surgery on $E_0$ along $C_0$ is $E$, and the other, say $E_1$, is primitive by Lemma \ref{lem:first_surgery} again.
Then we have the unique shell $\mathcal S_E = \{E_0, E_1, E_2, \ldots, E_p\}$ centered at $E$.
Let $E'_0$ be  a semiprimitive disk in $W$ disjoint from $E \cup E'$.
The circle $\partial E'_0$ would be a $(p, \bar q)$-curve on the boundary of the solid torus $\cl(V- \Nbd(E \cup E'))$.
We will assume $\bar q = q$.
That is, $\partial E'_0$ is a $(p, q')$-curve and so $\mathcal S_E$ is a $(p, q)$-shell.
The proof is easily adapted for the case of $\bar q = q'$.

\medskip

{\noindent \sc Claim 1.} There is a disk $E_j$ in the shell for some $1 \leq j < p/2$ that is disjoint from $C$.

\smallskip

{\noindent \it Proof of Claim $1$.}
First, it is clear that there is a disk $E_j$ for some $j \in \{1, 2, \ldots, p-1\}$ disjoint from $C$.
Since if $C$ intersects each of $E_1, E_2, \ldots, E_j$, for $j \in \{1, 2, \ldots, p-1\}$, then one of the disks from surgery on $E_j$ by an outermost subdisk $C_j$ of $C$ cut off by $C \cap E_j$ is $E$ and the other one is $E_{j+1}$ by Lemma \ref{lem:surgery_on_primitive}, and we have $|C \cap E_{j+1}| < |C \cap E_j|$.
Consequently, we see that $|C \cap E_p| < |C \cap E_0|$, but it contradicts the minimality of $|C \cap E_0|$ since $E_p$ is also a semiprimitive disk disjoint from $E$.

Now, denote by $E_j$ again the first disk in the sequence that is disjoint from $C$.
Then the two disks from surgery on $E$ along $C$ are $E_j$ and $E_{j+1}$, hence, $C$ is also disjoint from $E_{j+1}$.
Actually they are the only disks in the sequence disjoint from $C$.
For other disks in the sequence, it is easy to see that $|C \cap E_{j-k}| = k = |C \cap E_{j+1+k}|$.
If $j \geq p/2$, then we have $|C \cap E_0| = j > p-j-1 = |C \cap E_p|$, a contradiction for the minimality condition again.
Thus, $E_j$ is one of the disks in the first half of the sequence, that is, $1 \leq j < p/2$.

\medskip

{\noindent \sc Claim 2.}
The disk $E_j$ is $E_m$.

\smallskip

{\noindent \it Proof of Claim $2$.}
First, it is clear that $E_j$ is not $E_1$. That is, $C$ must intersect $E_1$, otherwise the principal pair $\{E_*, E_{**}\}$ equals $\{E_1, E_2\}$ and $E_1$ is primitive, which is impossible.
Assigning symbols $x$ and $y$ to oriented $\partial E'$ and $\partial E_0'$ respectively, $\partial E_1$, $\partial E_2$, $\partial E_3$ may represent the elements of the form $xy^p$, $xy^qxy^{p-q}$, $xy^qxy^qxy^{p-2q}$ respectively.
In general, $\partial E_k$ represents an element of the form $xy^{n_1}xy^{n_2} \cdots xy^{n_k}$ for some positive integers $n_1, \ldots, n_k$ with $n_1+ \cdots + n_k = p$ for each $k \in \{1, 2, \ldots, p\}$.
Furthermore, since $C$ is disjoint from $E_j$ and also from $E_{j+1}$, the word determined by the circle $\partial D$ contains the subword of the form $y^{m_1}xy^{m_2} \cdots xy^{m_{j+1}}$ (or its reverse) which is the part of $\partial C$ when $\partial E_{j+1}$ represents an element of the form $xy^{m_1}xy^{m_2} \cdots xy^{m_{j+1}}$.

Suppose that $2 \leq j \leq m-1$. Then an element represented by $\partial E_{j+1}$ has the form $xy^q \cdots xy^q xy^{p-jq}$, and so an element represented by $\partial D$ contains $xy^qx$ and $y^{p-jq}$, which lies in the part of $\partial C$.
Since $(p-jq)-q = q(m-1-j)+r \geq 2$, $D$ cannot be primitive by Lemma \ref{lem:property_of_primitive_elements2}, a contradiction.

Suppose that $m+1 \leq j \leq q'-2$.
We may write a word of $\partial E_{q'}$ as $xy^{n_1}xy^{n_2} \cdots xy^{n_{q'}}$ where each $n_k \in \{n, n+1\}$ for some positive integer $n$ since $\partial E_{q'}$ is primitive.
Then, by a similar consideration to the above, an element represented by $\partial D$ contains $xy^{m_1}x$ and $y^{m_2}$ for some positive integers $m_1$ and $m_2$ with $|m_1 - m_2|\geq2$, which lies in the part of $\partial C$.
Thus, $D$ cannot be primitive by Lemma \ref{lem:key3}, a contradiction again.

Suppose that $q'-1 \leq j \leq q'$.
Then the principal pair $\{E_*, E_{**}\}$ contains $E_{q'}$ which is primitive by Lemma \ref{lem:sequence}. This is impossible.

Finally, suppose that $q'+1 \leq j < p/2$.
Then, considering an element represented by $\partial E_{j+1}$, we observe that an element represented by $\partial D$ contains $xyx$ which lies in the part of $\partial C$.
Furthermore, when we write a word of the part of $\partial C$ as $xy^{n_1}xy^{n_2} \cdots xy^{n_{j+1}}$, at least two of $n_1, n_2, \ldots, n_{j+1}$ are $1$ and so at least one of $n_1, n_2, \ldots, n_{j+1}$ is greater than $2$.
Again, $D$ cannot be primitive by Lemma \ref{lem:property_of_primitive_elements2}, a contradiction.

\smallskip

From {\sc Claim 2}, the outermost subdisk $C$ is disjoint from $E_m$, and hence, the principal pair $\{E_*, E_{**}\}$ equals $\{E_m, E_{m+1}\}$.
\end{proof}

\begin{lemma}
Let $E$ be a primitive disk of $(p , \bar{q})$-type, and let $\mathcal S_E = \{E_0, E_1, \ldots, E_p\}$
be a $(p, \bar{q})$-shell centered at $E$.
Let $D$ be a primitive disk and let $\mathcal{B}_{ \{ D, E \} } = \{  \Delta_1 , \Delta_2 , \ldots, \Delta_n \} $ be the bridge with $\Delta_1 = \{E, E_m, E_{m+1}\}$, given by Lemma $\ref{lem:construction of a bridge}$.
Given a primitive disk $F$ and a corridor $\mathcal{C}_{ \{ F, E \} } = \{  \Delta'_1 , \Delta'_2 , \ldots, \Delta'_{n'} \} $, if $\Delta'_1 = \Delta_1$, then the corridor $\mathcal C_{\{F, E\}}$ contains the bridge $\mathcal B_{\{D, E\}}$.
\label{lem:words for disks in a bridge}
\end{lemma}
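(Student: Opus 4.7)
The plan is to apply Lemma \ref{lem:two corridors} iteratively: to show that $\mathcal{C}_{\{F, E\}}$ contains $\mathcal{B}_{\{D, E\}} = \{\Delta_1,\ldots,\Delta_n\}$, it suffices to verify that for each $k \in \{1,\ldots,n-1\}$, writing $\Delta_k \cap \Delta_{k+1} = \{D_k, D'_k\}$ and $\Delta_{k+1} = \{D_{k+1}, D_k, D'_k\}$, the vertex $D_{k+1}$ is the disk from surgery on $D_k \cup D'_k$ along $F$ (and not merely along $D$). Equivalently, I must show that $\Delta'_k = \Delta_k$ for each $k \in \{1,\ldots,n\}$ in the corridor $\mathcal{C}_{\{F, E\}} = \{\Delta'_1, \ldots, \Delta'_{n'}\}$.

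The base case amounts to showing $\Delta'_2 = \Delta_2$. The hypothesis $\Delta'_1 = \Delta_1 = \{E, E_m, E_{m+1}\}$ together with Lemma \ref{lem:principal pair of a corridor} identifies $\{E_m, E_{m+1}\}$ as the principal pair of $E$ with respect to $F$. Writing $\Delta_2 = \{E_m, E_{m+1}, E_{\emptyset}\}$, where $E_{\emptyset}$ is the band-sum appearing in the construction of Lemma \ref{lem:construction of a bridge}, I would show that $E_{\emptyset}$ is also the disk from surgery on $E_m \cup E_{m+1}$ along $F$. Cutting $\Sigma$ along $\partial E_m \cup \partial E_{m+1}$ produces the four-holed sphere $\Sigma_*$ of Figure \ref{fig:L_or_R-replacement}, and the arc configuration of $F \cap \Sigma_*$ would be analyzed in the spirit of the proof of Lemma \ref{lem:principal pair of a bridge}. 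The claim is that any arc configuration other than the one giving $E_{\emptyset}$ would cause $\partial F$, read in a suitable complete meridian system of $W$, to contain one of the forbidden subwords of Lemmas \ref{lem:property_of_primitive_elements2}, \ref{lem:key}, \ref{lem:key2}, or \ref{lem:key3}, contradicting the primitivity of $F$.

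For the inductive step, I would iterate this analysis along the sequence of L/R-replacements in the principal complex $\mathcal{X}$ of Lemma \ref{lem:construction of a bridge}. After each replacement, the complete meridian system of $W$ is updated by a band-sum, as in the proofs of Lemmas \ref{lem:key2} and \ref{lem:key3}, and the word representing $\partial F$ in the new system is tracked. Because every intermediate vertex of the bridge is non-primitive while $D$ is the first primitive disk encountered along the L/R-procedure, the primitivity of $F$ forces the corridor $\mathcal{C}_{\{F, E\}}$ to proceed through exactly the same 2-simplices as $\mathcal{B}_{\{D, E\}}$ up through $\Delta_n$. Beyond $\Delta_n$ the corridor may extend further when $F \neq D$, but $\mathcal{B}_{\{D, E\}}$ is entirely contained in $\mathcal{C}_{\{F, E\}}$.

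The main obstacle will be the combinatorial analysis at each step: ruling out every arc configuration of $F$ on the relevant four-holed sphere other than the one dictated by the L/R-replacement. This requires checking both patterns of $\partial E'$ on $\Sigma_*$ exhibited in Figure \ref{fig:L_or_R-replacement} and verifying, via the non-primitiveness criteria of Section \ref{sec:disk_complex}, that alternative configurations are incompatible with $F$ representing a primitive element of $\pi_1(W)$. All subsequent inductive steps then reduce to shifted versions of this base analysis after the appropriate update of the meridian system.
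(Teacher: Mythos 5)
Your skeleton is exactly the paper's: reduce via Lemma \ref{lem:two corridors} to showing that, for each consecutive pair $\Delta_k, \Delta_{k+1}$ of the bridge, the disk from surgery on $\Delta_k \cap \Delta_{k+1}$ along $F$ is the third vertex of $\Delta_{k+1}$; induct on $k$; and at each step cut $\Sigma$ along the two disks of $\Delta_k \cap \Delta_{k+1}$ and use the non-primitivity criteria of Lemmas \ref{lem:property_of_primitive_elements2}--\ref{lem:key3} to forbid every surgery outcome except the one the bridge prescribes. That much is correct and is what the paper does.

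The genuine gap is that the entire technical content of the proof lives inside the sentence you defer to ``the main obstacle,'' and you give no mechanism for carrying it out. ``Rule out every arc configuration other than the one giving the horizontal disk'' is not a finite checklist until you organize the possible outermost-subdisk arcs of $F$ on the four-holed sphere $\Sigma_*$; the paper does this by passing to a planar covering space of $\Sigma_*$ and parametrizing the candidate arcs by rationals $r/s$ with odd denominator, so that exactly four cases ($r>s$, $r=s$, $s>r>0$, $r<0$) need to be excluded, with $r=s$ killed by the induction hypothesis (it would send the corridor backward to $\Delta_{k-1}$) rather than by a word argument. Moreover, to apply Lemmas \ref{lem:key2} and \ref{lem:key3} in the remaining cases one needs quantitative control of the word that $\alpha$ reads off: the paper keeps the \emph{fixed} meridian system $\{E', E_0'\}$, records the pattern of $\Sigma_* \cap (\partial E' \cup \partial E_0')$ by seven weights $k_1,\dots,k_7$, solves the linear system they satisfy in terms of $n_1, n_2, \bar q$ (using $n_1 > \bar q > n_2 > 0$, which holds at every stage of the $L/R$-procedure), and only then verifies that each bad case produces a subword $xy^{t}x^{-1}$ with $t\neq 0$ or a pair $xy^{\bar q}x$, $xy^{n_2}x$. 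Your alternative of re-expressing $\partial F$ in an updated meridian system after each band sum is not obviously wrong, but you would still have to produce the analogue of this enumeration and weight computation at every stage, and nothing in the proposal indicates how. As written, the argument asserts the conclusion of the hard step rather than proving it.
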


\begin{proof}
We assume that each of the disks $\partial D$, $\partial E$, $\partial F$ intersects
$\partial E' \cup \partial E'_0$ transversely and minimally.
Assigning symbols $x$ and $y$ to $\partial E'$ and $\partial E'_0$ respectively,
with appropriate orientations, a word for $\partial E$ is $x$ while
a word for $\partial D$ is a positive power of
$x y^{\bar{q}}$ followed by $x y^{\bar{q}\pm 1}$
as in the proof of Lemma \ref{lem:construction of a bridge}.

For each  $k \in \{1, 2, \ldots, n-1\}$, set $\Delta_{k} \cap \Delta_{k+1} = \{ D_k, D'_k \}$ and $\Delta_{k+1} = \{ D_{k+1}, D_k, D'_k \}$.
To show the corridor $\mathcal C_{\{F, E\}}$ contains the bridge $\mathcal B_{\{D, E\}}$, it suffices to show that the disk from surgery on $D_k \cup D'_k$ along $F$ is  $D_{k+1}$ by Lemma \ref{lem:two corridors}.
We use the induction on $k$.
If $k = 1$, the conclusion holds immediately since we assumed $\Delta'_1 = \Delta_1$.

Let $k \geq 2$ and assume the conclusion holds for all $i < k$.
We simply write $\Delta_{k} = \{ D_0, D_1, D_2 \}$ and $\Delta_{k+1} = \{ D_1, D_2, D_* \}$ from now on, and will show that the disk from surgery on $D_1 \cup D_2$ along $F$ is  $D_*$.
By the construction of a bridge in the proof of Lemma \ref{lem:construction of a bridge}, we may assume that the circles $\partial D_1$ and $\partial D_2$ represent the elements of the forms $(xy^{\bar{q}})^{m_1} xy^{n_1}$
and $(xy^{\bar{q}})^{m_2} xy^{n_2}$ respectively,
where $n_1 > \bar{q} > n_2 > 0$.
By cutting the Heegaard surface $\Sigma$ along $\partial D_1 \cup \partial D_2$,
we get a $4$-holed sphere $\Sigma_*$.
We denote by $d^\pm_{1}$ and $d^\pm_{2}$ the holes coming from $\partial D_1$ and $\partial D_2$
respectively.
There are two patterns of $\partial E' \cap \Sigma_*$ as in Figure \ref{fig:L_or_R-replacement}, and the boundary of the disk $D_*$ is the horizontal circle in the figure.
We only consider the case of Figure \ref{fig:L_or_R-replacement} (a). The argument for the case (b) will be the same.

Let $C$ be an outermost subdisk of $F$ cut off by $D_1 \cup D_2$.
Then $\alpha = \Sigma_* \cap C$ is an arc whose end points lie in the same component of $\partial \Sigma_*$.
We consider only the case where $\partial \alpha$ lies in $d^-_{1}$.
The argument for other cases will be the same.
Let $\widetilde{\Sigma}_*$ be the covering space of $\Sigma_*$ such that
\begin{enumerate}
\item
$\widetilde{\Sigma}_*$ is the plane $\mathbb R^2$ with an open disk of radius at most $1/8$ removed
from each point with integer coordinates;
\item
the components of the preimage of $l^*_1$ ($l^*_2$, respectively)
are the vertical lines with even (odd, respectively) integer $x$-coordinate; and
\item
the components of the preimage of $l'$ ($l''$, respectively)
are the horizontal lines with even (odd, respectively) integer $y$-coordinate.
\end{enumerate}
We put a lift of $d^-_{1}$ at the origin.
Denote by $\Rational_{\mathrm{odd}}$ the set of irreducible rational numbers with odd denominators.
Then the set $\Rational_{\mathrm{odd}}$ one-to-one corresponds to the set of the (isotopy classes of) essential arcs $\alpha = \alpha_{r/s}$ on $\Sigma_*$ such that both endpoints of $\alpha$ lie in $d^-_{1}$ and $\alpha$ cuts off an annulus one of whose boundary circle is either $d^+_2$ or $d^-_2$, as follows.
For each $r/s \in \Rational_{\mathrm{odd}}$,
let $\widetilde{\beta} = \widetilde{\beta}_{r/s}$ be the line segment of the slope $r/s$ connecting the origin and the point $(s, r)$ in $\mathbb R^2$ such that $\widetilde{\beta} \cap \widetilde{\Sigma}_*$ is a single arc properly embedded in $\widetilde{\Sigma}_*$ (by assuming the open disks removed are sufficiently small).
Then the image $\beta$ of the segments $\widetilde{\beta} \cap \widetilde{\Sigma}_*$ is a simple arc in $\Sigma_*$ connecting $d_{1}^-$ and one of $d^+_2$ or $d^-_2$, say $d^+_2$.
Then the corresponding essential arc $\alpha = \alpha_{r/s}$ is the frontier of a regular neighborhood of the union
$\beta \cup d^+_2$.

Recall that our arc $\alpha \subset \Sigma_*$ is the intersection of
$\Sigma_*$ and an outermost subdisk $C$ of $F$ cut off by $D_1 \cup D_2$.
Let $r/s$ ($s>0$) be the element of $\Rational_{\mathrm{odd}}$ corresponding to $\alpha$.
We shall prove that $r/s = 0$, which implies that the disk from surgery on $D_1 \cup D_2$ along $F$ is $D_*$.
To show this, we will check all the cases of $r \neq 0$ violate the assumption that $F$ is primitive.

Figure \ref{fig:parameters} illustrates the patterns of $\Sigma_* \cap (\partial E' \cup \partial E'_0)$ on the 4-holed sphere $\Sigma_*$.
\begin{center}
\begin{overpic}[width=5cm,clip]{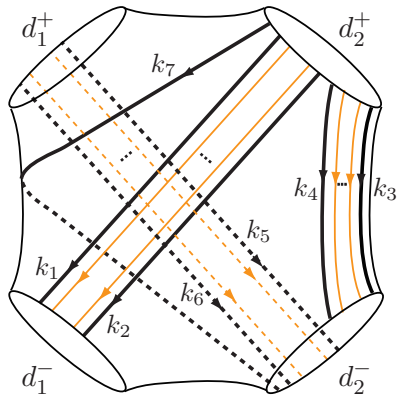}
 \linethickness{3pt}
\put(5, 135){$d^+_1$}
\put(5, 3){$d^-_1$}
\put(125, 135){$d^+_2$}
\put(125, 3){$d^-_2$}
\put(10, 45){\small $k_1$}
\put(37, 23){\small $k_2$}
\put(138, 75){\small $k_3$}
\put(108, 75){\small $k_4$}
\put(90, 59){\small $k_5$}
\put(65, 35){\small $k_6$}
\put(54, 122){\small $k_7$}
\end{overpic}
\captionof{figure}{The patterns of  $\Sigma_* \cap (\partial E' \cup \partial E'_0)$.}
\label{fig:parameters}
\end{center}
In the figure, each oriented bold arc, we denote by $\gamma'$, with a weight $k_i$ ($i \in \{ 1, 2, \ldots, 7  \} $) indicates the union of some arcs
$\gamma'_1$, $\gamma'_2 , \ldots, \gamma'_l$ of
$\Sigma_* \cap \partial E'_0$ on $\Sigma_*$ parallel to $\gamma'$
such that $k_i  = \sum_{i=j}^l [\gamma'_j]$ in $H_1(\Sigma_* , \partial \Sigma_*; \Integer)$,
after equipping the orientation of $\gamma'_j$ compatible with that of $\partial E'_0$.
Between the arcs with weights $k_1$ and $k_2$ (and also between the arcs with weights $k_5$ and $k_6$), we have $m_1 + 1$ oriented arcs $\gamma_1$, $\gamma_2, \ldots, \gamma_{m_1 +1}$ of $\Sigma_* \cap \partial E'$, and between $\gamma_i$ and $\gamma_{i+1}$ for $i \in \{1, 2, \ldots, m_1\}$, we have exactly $\bar{q}$ parallel arcs of
$\Sigma_* \cap \partial E_0'$ on $\Sigma_*$ in the same direction.
Similarly, between the arcs with weights $k_3$ and $k_4$, we have $m_2 - m_1$ oriented arcs of $\Sigma_* \cap \partial E'$, and between any two consecutive arcs of them, we have exactly $\bar{q}$ parallel arcs of $\Sigma_* \cap \partial E_0'$ on $\Sigma_*$ in the same direction.
By Figure \ref{fig:L_or_R-replacement} we have the following linear equations:
\begin{eqnarray}
\label{eq:solution of the linear equation}
\left\{
\begin{array}{ll}
k_1 + k_2 = n_1\\
k_5 + k_6 = n_1\\
k_1 + k_3 + k_7 = n_2\\
k_4 + k_6 + k_7 = n_2\\
k_2 + k_4 = \bar{q}\\
k_3 + k_5 = \bar{q}
\end{array}
\right.
\end{eqnarray}
Solving these equation, we see that there exist non-negative integers $a$ and $b$ such that
$k_1 = a$, $k_2 = n_1 - a$, $k_3 = n_2 - a - b$, $k_4 = \bar{q} - n_1 + a$, $k_3 = \bar{q} - n_2 + a + b$, $k_6 = n_1 + n_2 - \bar{q} - a - b$, $k_7 = b$.

\medskip

\noindent \textit{Case }1. $r > s$.

In this case with a suitable choice of an orientation of $\alpha$ the word corresponding to the arc $\alpha$ contains both of the terms
$x y^{\bar{q}} x$ and $x y^{n_2} x$ after canceling pairs of $y$ and $y^{-1}$ if necessary.
See Figure \ref{fig:subarcs_1}.
\begin{center}
\begin{overpic}[width=12cm,clip]{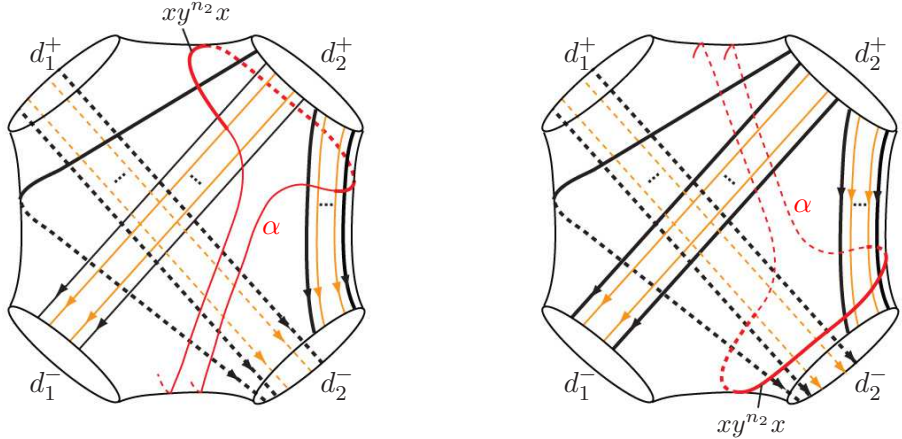}
\linethickness{3pt}
\put(10, 140){$d^+_1$}
\put(10, 15){$d^-_1$}
\put(120, 140){$d^+_2$}
\put(120, 15){$d^-_2$}
\put(98, 75){\color{red} $\alpha$}
\put(60, 156){\small $x y^{n_2}x$}
\put(212, 140){$d^+_1$}
\put(212, 15){$d^-_1$}
\put(322, 140){$d^+_2$}
\put(322, 15){$d^-_2$}
\put(299, 83){\color{red} $\alpha$}
\put(270, 0){\small $x y^{n_2}x$}
\end{overpic}
\captionof{figure}{The bold part of $\alpha$ determines the word $x y^{n_2} x$.}
\label{fig:subarcs_1}
\end{center}
By Lemma \ref{lem:key3} this implies that $F$ is not a primitive disk, whence a contradiction.

\medskip

\noindent \textit{Case }2. $r = s$.

In this case the disk from surgery on $D_1 \cup D_2$ along $F$ is $D_0$, which is impossible by the assumption of the induction.

\medskip

\noindent \textit{Case }3. $s > r > 0$.
Suppose first that $1/2 > r/s$.
If $r$ is odd, with a suitable choice of an orientation of $\alpha$ the word corresponding to the arc $\alpha$ contains the term $x y^{-k_3 + k_6} x^{-1}$
after canceling pairs of $y$ and $y^{-1}$ if necessary.
See the left-hand side in Figure \ref{fig:subarcs_2}.
\begin{center}
\begin{overpic}[width=12cm,clip]{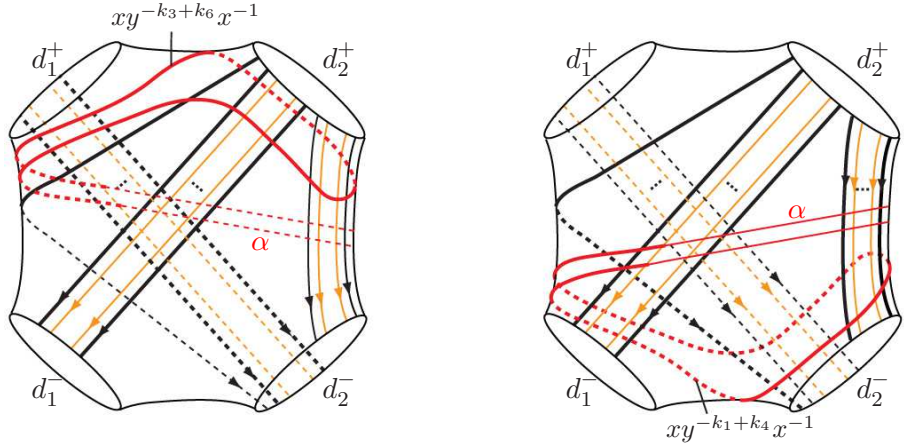}
\linethickness{3pt}
\put(10, 140){$d^+_1$}
\put(10, 15){$d^-_1$}
\put(120, 140){$d^+_2$}
\put(120, 15){$d^-_2$}
\put(93, 71){\color{red} $\alpha$}
\put(40, 156){\small $x y^{-k_3 + k_6} x^{-1}$}
\put(212, 140){$d^+_1$}
\put(212, 15){$d^-_1$}
\put(322, 140){$d^+_2$}
\put(322, 15){$d^-_2$}
\put(296, 83){\color{red} $\alpha$}
\put(250, 0){\small $x y^{-k_1 + k_4} x^{-1}$}
\end{overpic}
\captionof{figure}{Left: the bold part of $\alpha$ determines the word $x y^{-k_3 + k_6} x^{-1}$.
Right: the bold part of $\alpha$ determines the word $x y^{-k_1 + k_4} x^{-1}$.}
\label{fig:subarcs_2}
\end{center}
By the solution of the equation $(\ref{eq:solution of the linear equation})$,
we have $- k_3 + k_6 = - \bar{q} + n_1 \geq 1$.
By Lemma \ref{lem:key2} this implies that $F$ is not a primitive disk, whence a contradiction.
If $r$ is even, with a suitable choice of an orientation of $\alpha$ the word corresponding to the arc $\alpha$ contains the term $x y^{-k_1 + k_4} x^{-1}$
after canceling pairs of $y$ and $y^{-1}$ if necessary.
See the right-hand side in Figure \ref{fig:subarcs_2}.
By the solution of the equation $(\ref{eq:solution of the linear equation})$,
we have $- k_1 + k_4 = \bar{q} - n_1 \leq -1$.
Thus, by Lemma \ref{lem:key2}, $F$ is not a primitive disk, a contradiction.

Next suppose that $r/s > 1/2$.
If $r$ is odd, with a suitable choice of an orientation of $\alpha$ the word corresponding to the arc $\alpha$ contains the term $x y^{k_1 -k_4} x^{-1}$ after canceling pairs of $y$ and $y^{-1}$ if necessary.
See the left-hand side in Figure \ref{fig:subarcs_3}.
\begin{center}
\begin{overpic}[width=12cm,clip]{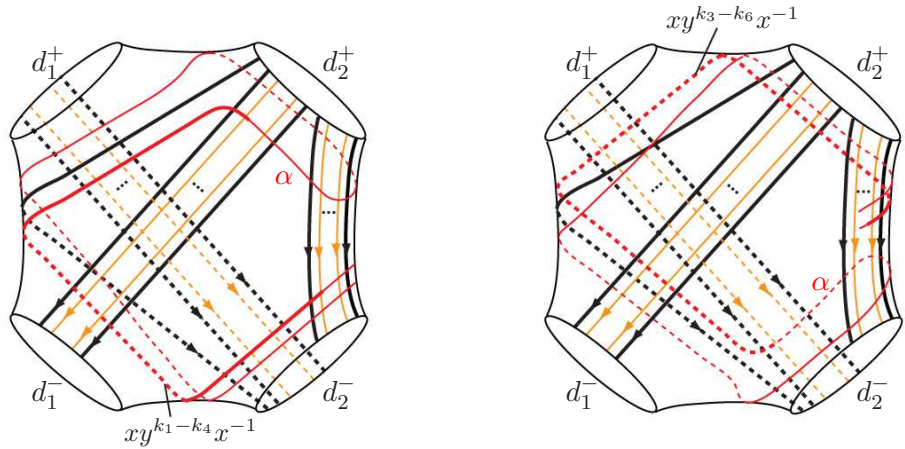}
\linethickness{3pt}
\put(10, 140){$d^+_1$}
\put(10, 15){$d^-_1$}
\put(120, 140){$d^+_2$}
\put(120, 15){$d^-_2$}
\put(102, 97){\color{red} $\alpha$}
\put(45, 0){\small $x y^{k_1 - k_4} x^{-1}$}
\put(212, 140){$d^+_1$}
\put(212, 15){$d^-_1$}
\put(322, 140){$d^+_2$}
\put(322, 15){$d^-_2$}
\put(305, 57){\color{red} $\alpha$}
\put(250, 156){\small $x y^{k_3 - k_6} x^{-1}$}
\end{overpic}
\captionof{figure}{Left: the bold part of $\alpha$ determines the word $x y^{k_1 - k_4} x^{-1}$.
Right: the bold part of $\alpha$ determines the word $x y^{k_3 - k_6} x^{-1}$.}
\label{fig:subarcs_3}
\end{center}
As above, we have $k_1 - k_4 = - \bar{q} + n_1 \geq 1$, whence a contradiction by Lemma \ref{lem:key2}.
If $r$ is even, with a suitable choice of an orientation of $\alpha$ the word corresponding to the arc $\alpha$ contains the term $x y^{k_3 - k_6} x^{-1}$
after canceling pairs of $y$ and $y^{-1}$ if necessary.
See the right-hand side in Figure \ref{fig:subarcs_3}.
As above we have $k_3 - k_6 = \bar{q} - n_1 \leq 1$, whence a contradiction by Lemma \ref{lem:key2}.

\medskip

\noindent \textit{Case }4. $0 > r$.
In this case with a suitable choice of an orientation of $\alpha$ the word corresponding to the arc $\alpha$ contains the term $x y^{k_1 -k_5 + k_7} x^{-1}$
after canceling pairs of $y$ and $y^{-1}$ if necessary.
See Figure \ref{fig:subarcs_4}.

\medskip

\begin{center}
\begin{overpic}[width=12cm,clip]{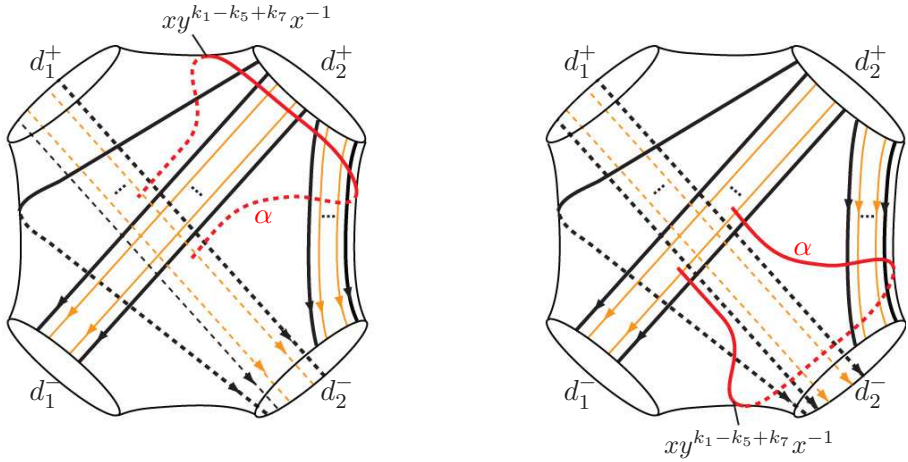}
\linethickness{3pt}
\put(10, 140){$d^+_1$}
\put(10, 15){$d^-_1$}
\put(120, 140){$d^+_2$}
\put(120, 15){$d^-_2$}
\put(95, 82){\color{red} $\alpha$}
\put(60, 156){\small $x y^{k_1 -k_5 + k_7} x^{-1}$}
\put(212, 140){$d^+_1$}
\put(212, 15){$d^-_1$}
\put(322, 140){$d^+_2$}
\put(322, 15){$d^-_2$}
\put(299, 70){\color{red} $\alpha$}
\put(250, -5){\small $x y^{k_1 -k_5 + k_7} x^{-1}$}
\end{overpic}
\captionof{figure}{The bold part of $\alpha$ determines the word $x y^{k_1 -k_5 + k_7} x^{-1}$.}
\label{fig:subarcs_4}
\end{center}
As above, we have $k_1 -k_5 + k_7 = - \bar{q} + n_2 \leq -1$, whence a contradiction by Lemma \ref{lem:key2}.
\end{proof}

\begin{lemma}
\label{lem:primitive disks in a bridge}
Let $\mathcal{B}_{ \{ D , E \} }$ be a bridge connecting primitive disks $D$ and $E$.
Then $V_D = \cl(V- \Nbd(D))$ is not isotopic to $V_E = \cl(V- \Nbd(E))$ in $L(p,q)$.
In particular,
$D$ is of $(p,q)$-type if and only if $E$ is of $(p,q')$-type.
\end{lemma}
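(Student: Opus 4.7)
The plan is to prove the ``in particular'' assertion first; the main non-isotopy claim will then follow because Heegaard solid tori of opposite types in $L(p,q)$ lie in distinct isotopy classes (distinguished either by the homology class of the core in $H_1(L(p,q)) = \Integer/p$ when $q \neq q'$, or by the non-triviality of the swap involution in the mapping class group of $L(p,q)$ when $q = q'$).

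Setting up coordinates, fix a dual disk $E'$ of $E$ and let $E_0 \subset V$, $E'_0 \subset W$ be the unique semiprimitive disks disjoint from $E \cup E'$; let $\pi_1(W) = \langle x, y \rangle$ with $x = [\partial E']$ and $y = [\partial E'_0]$, so that $\partial E = x$. Suppose $E$ is of $(p, \bar q_E)$-type. By Lemma \ref{lem:construction of a bridge}, $\partial D$ is represented by a word of the form $(xy^{\bar q_E})^{N+1} y^{\epsilon}$ for some $N \geq 1$ and $\epsilon \in \{+1, -1\}$. After the Nielsen change of basis $(u,v) = (xy^{\bar q_E}, y)$ one has $\partial D = u^{N+1}v^{\epsilon}$ and $\partial E = uv^{-\bar q_E}$ in the new basis. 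Taking $x_D = \partial D$ and $y_D = u$ as the basis of $\pi_1(W)$ adapted to $D$, and substituting $v^{\epsilon} = y_D^{-(N+1)}x_D$ into the expression for $\partial E$, the cyclically reduced form of $\partial E$ in $(x_D, y_D)$ has every $x_D$-letter carrying sign $-\epsilon$, with one $y_D$-run equal to $-N$ or $N+2$ and the remaining $\bar q_E - 1$ runs equal to $-(N+1)$ or $N+1$ (according to $\epsilon$).

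Now suppose toward a contradiction that $\bar q_D = \bar q_E$. Applying Lemma \ref{lem:construction of a bridge} from $D$'s end of the bridge (with $D$ also of type $\bar q_E$) forces $\partial E$, read in the basis $(x_D, y_D)$, to be represented by a word of the form $(x_D y_D^{\bar q_E})^{M+1} y_D^{\epsilon'}$ for some $M \geq 1$ and $\epsilon' \in \{\pm 1\}$, whose cyclically reduced form has every $x_D$-letter with sign $+1$ and $y_D$-runs all equal to $\bar q_E$ except for one equal to $\bar q_E + \epsilon'$. Since an element of $F_2$ has a unique cyclically reduced representative up to cyclic rotation, the two expressions must agree: when $\epsilon = +1$ the $x_D$-signs are directly incompatible, whereas when $\epsilon = -1$ the $x_D$-signs agree but the $y_D$-runs in the direct computation are all negative while those in the bridge form are all positive, again incompatible. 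Hence $\bar q_D \neq \bar q_E$, proving the ``in particular'' statement. The main technical hurdle is justifying that the $y_D$ chosen is indeed the one forced by the disjointness of $D'_0$ from $D \cup D'$ (and ruling out residual Nielsen substitutions $y_D \mapsto y_D x_D^{k}$); this reduces to computing the $H_1(\Sigma)$-class of $\partial D'_0$ from the equations $\partial D'_0 \cdot \partial D = \partial D'_0 \cdot \partial D' = 0$, yielding $[\partial D'_0] = -((N+1)\bar q_E + \epsilon)[\partial E'] + (N+1)[\partial E'_0]$, and then invoking Lemma \ref{lem:property of primitive elements} to pin down the corresponding primitive representative.
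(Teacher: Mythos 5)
Your overall strategy is inverted relative to what the statement actually requires, and it breaks down in exactly the case that motivates the whole paper's running example. When $q^2 \equiv \pm 1 \pmod p$ (e.g.\ $L(12,5)$) one has $q = q'$, so there is only one type: \emph{every} primitive disk is of $(p,q)$-type, the ``in particular'' clause is vacuously true, and yet the main assertion that $V_D$ is not isotopic to $V_E$ is still a substantive claim that must be proved. No argument that first establishes a type difference and then deduces non-isotopy can work here, because there is no type difference to establish. Your fallback for this case --- ``the non-triviality of the swap involution in the mapping class group'' --- is not an isotopy obstruction for the two solid tori and does not engage with the fact that the types coincide. Relatedly, your word-theoretic ``contradiction'' is illusory: the word assigned to an unoriented curve with respect to a meridian system is only well defined up to inverting the generators, inverting the whole word, and cyclic permutation, and the sign mismatches you exhibit (all $x_D$-letters negative versus positive, or all $y_D$-runs negative versus positive) are exactly of this removable kind. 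In $L(12,5)$ one computes $\partial E = (x_D y_D^{-5})^4 x_D y_D^{-4}$ in your basis, which becomes the ``bridge form'' $(x_D \tilde y_D^{5})^4 x_D \tilde y_D^{4}$ after the legitimate reorientation $\tilde y_D = y_D^{-1}$; consistently, the conclusion $\bar q_D \neq \bar q_E$ that you are driving toward is simply false there.

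The paper's proof goes the other way and avoids types entirely. Using Lemmas \ref{lem:principal pair of a bridge} and \ref{lem:words for disks in a bridge} to put the bridge in the canonical form of Lemma \ref{lem:construction of a bridge}, it records that $\partial E$ represents $x$ and $\partial D$ represents $(xy^{\bar q})^k x y^{\bar q \pm 1}$, and then computes the classes of the cores $l_E$, $l_D$ of the complementary solid tori $W_E$, $W_D$ under $\varphi\colon H_1(W;\Integer) \to H_1(L(p,q);\Integer) = \Integer/p\Integer$: one finds $\pm[l_E] = \pm 1$ and $\pm[l_D] = \pm\bar q$, and since $2 \leq \bar q \leq p/2$ forces $\bar q \not\equiv \pm 1 \pmod p$, the solid tori are not isotopic. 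This homological computation works uniformly whether or not $q = q'$, and the type statement is then read off as a corollary rather than serving as the engine of the proof. Your instinct to use the homology class of a core is the right one; the fix is to compute it directly from the explicit word for $\partial D$ (which you already have) rather than routing the argument through the types.
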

\begin{proof}
Choose $E'$, $E_0$ and $E_0'$ as in the proof of  Lemma \ref{lem:principal pair of a bridge}.
Assigning symbols $x$ and $y$ to oriented $\partial E'$ and $\partial E'_0$ respectively,
any oriented simple closed curve on $\partial W$ represents an element of the free group $\pi_1 (W) = \langle x, y \rangle$.
Note that the natural projection
\[ \varphi : H_1(W; \Integer) = \Integer [x] \oplus \Integer [y] \to H_1(L(p,q); \Integer ) = \Integer / p \Integer \]
induced by the inclusion $W \hookrightarrow L(p,q)$ satisfies $\varphi ([x]) = 0$ and $\varphi ([y]) = \pm 1$.

By Lemmas \ref{lem:principal pair of a bridge} and \ref{lem:words for disks in a bridge},
the bridge $\mathcal{B}_{ \{ D , E \} }$ is obtained as in Lemma \ref{lem:construction of a bridge}.
In particular,
by the argument of Lemma \ref{lem:construction of a bridge},
the circle $\partial D$ determines the element of the form $(x y^{\bar{q}})^k xy^{ \bar{q} \pm 1}$ for some $k \in \Natural$
while $\partial E$ determines $x$ in $\pi_1 (W) = \langle x, y \rangle$.

Consider the exteriors $W_D := \cl( L(p,q ) - V_D)$ and $W_E := \cl( L(p,q ) - V_E)$
of $V_D$ and $V_E$ in $L(p,q)$, and
let $l_D$ and $l_E$ be the cores of those solid tori $W_D$ and $W_E$ respectively.
It suffices to show that their homology classes $[l_D]$ and $[l_E]$ differs in $H_1 (L(p,q) , \Integer)$.
We regard $H_1(W_D; \Integer)$ and $H_1(W_E ; \Integer)$ as subgroups of $H_1(W ; \Integer)$ in a natural way.
It is then easy to see from the construction that $H_1 (W_E ; \Integer) = (\Integer[x] \oplus \Integer [y] )/ \Integer [x] = \Integer [y]$, which implies that $\pm [l_E] = \pm \varphi ([y]) = \pm 1 \in \Integer / p \Integer$.
On the other hand, we have  $H_1 (W_D ; \Integer) = (\Integer [x] \oplus \Integer [y] )/  \Integer ( (k+1) ([x] + \bar{q} [y]) \pm [y] )
= \Integer ([x] + \bar{q} [y] )$, which implies that $\pm [l_D] = \pm \varphi ( [x] + \bar{q}  [y]) = \pm \bar{q} \in \Integer / p \Integer$.
Since $\bar {q} \neq \pm 1 \pmod p$, we have $[l_D] \neq [l_E]$ in $H_1(L(p , q) ; \Integer)$.
This completes the proof.
\end{proof}

We recall that the primitive disk complex $\mathcal P(V)$ consists of infinitely many tree components, and given any vertex $E$ of $\mathcal P(V)$, there are infinitely many shells $\mathcal S_E = \{E_0, E_1, \ldots, E_p\}$ centered at $E$ by the choice of a semiprimitive disk $E_0$ and the choice of a primitive disk $E_1$.
Thus, by Lemma \ref{lem:construction of a bridge}, for each vertex $E$ of $\mathcal P(V)$ there are infinitely many bridges having $E$ as its end vertex.
Further, we have the following description of the bridges.

\begin{lemma}
\label{lem:uniqueness of a bridge}
Any two bridges are isomorphic to each other. Any two bridges are either disjoint from each other or intersect only in an end vertex.
\end{lemma}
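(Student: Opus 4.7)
The plan is to prove the two claims separately, both resting on the algorithmic $L/R$-replacement construction in the proof of Lemma \ref{lem:construction of a bridge}, together with the rigidity statements Lemmas \ref{lem:principal pair of a bridge} and \ref{lem:words for disks in a bridge}. The combined effect of the latter two is that a bridge $\mathcal{B}_{\{D, E\}}$ is completely recovered from the choice of primitive endpoint $E$ and the first $2$-simplex $\Delta_1$ at $E$: the simplex $\Delta_1$ fixes a unique shell $\mathcal{S}_E$ via Lemma \ref{lem:principal pair of a bridge}, and all subsequent $2$-simplices are forced by the surgery rule of Lemma \ref{lem:words for disks in a bridge}.

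For the isomorphism claim, fix an endpoint $E$ of $(p, \bar{q})$-type and consider any bridge with $E$ as one end. By Lemma \ref{lem:principal pair of a bridge} the initial simplex has the form $\Delta_1 = \{E, E_m, E_{m+1}\}$ for some $(p, \bar{q})$-shell $\mathcal{S}_E$. The algorithmic production of the bridge from this starting data, recorded by the minimal positive $L/R$-word $w$ with $n_w = \bar{q} \pm 1$ from the proof of Lemma \ref{lem:construction of a bridge}, depends only on the integers $p$ and $\bar{q}$ and not on the specific shell or the ambient choice of $E$. Hence the abstract simplicial type of a bridge is an invariant of the pair $(p, \bar{q})$. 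Lemma \ref{lem:primitive disks in a bridge} guarantees that every bridge has one endpoint of type $(p, q)$ and one of type $(p, q')$, so running the algorithm from either end produces the same complex up to reversal. Consequently, any two bridges are pairwise isomorphic as simplicial complexes.

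For the intersection claim, suppose distinct bridges $\mathcal{B}_1 = \mathcal{B}_{\{D_1, E_1\}}$ and $\mathcal{B}_2 = \mathcal{B}_{\{D_2, E_2\}}$ share a vertex $v$. A shared primitive vertex is automatically an endpoint of each, so the essential case is that $v$ is non-primitive and therefore interior in both bridges. At such a $v$, pick the $2$-simplex $\Delta$ of $\mathcal{B}_1$ containing $v$ that is nearest to the primitive endpoint $E_1$. The sub-corridor $\mathcal{C}_{\{v, E_1\}}\subset\mathcal{B}_1$ is the unique shortest path from the link of $v$ to the link of $E_1$ in the dual tree of $\mathcal{D}(V)$, and Lemma \ref{lem:words for disks in a bridge} (applied to the sub-corridor) says that this path is recovered step-by-step by inverse $L/R$-replacements from $\Delta$. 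Performing the same analysis in $\mathcal{B}_2$ at $v$ yields a sub-corridor from $v$ to $E_2$. Since the inverse $L/R$-procedure from a given non-primitive vertex and a given incident $2$-simplex is unique, the two sub-corridors match, forcing $E_1 = E_2$ and the initial $2$-simplices at that common endpoint to coincide; by the rigidity of the forward $L/R$-algorithm this in turn forces $\mathcal{B}_1 = \mathcal{B}_2$, contradicting distinctness. The same mechanism rules out the possibility of two distinct bridges sharing a primitive endpoint together with any additional interior vertex.

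The main obstacle I expect is the bookkeeping in the interior-vertex case: although a non-primitive $v$ lies in infinitely many $2$-simplices of $\mathcal{D}(V)$, it lies in only finitely many $2$-simplices of any given bridge, and one must carefully exploit the $L/R$-rigidity of Lemma \ref{lem:words for disks in a bridge} to translate a shared vertex into a shared incident $2$-simplex, and thence into a shared sub-corridor back to a primitive endpoint. Once that translation is in place, the uniqueness of the algorithmic construction closes the argument.
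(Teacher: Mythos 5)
Your proof of the first statement is correct and is essentially the paper's argument: Lemma \ref{lem:principal pair of a bridge} forces the initial $2$-simplex of any bridge at its $(p,q)$-type end to be $\{E, E_m, E_{m+1}\}$ for some $(p,q)$-shell, Lemma \ref{lem:words for disks in a bridge} shows that the bridge with that initial simplex is unique and hence is the one produced by the construction of Lemma \ref{lem:construction of a bridge}, and Lemma \ref{lem:primitive disks in a bridge} normalizes which end one reads from; since the construction depends combinatorially only on $(p,q)$, all bridges are isomorphic.

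For the second statement there is a genuine gap, which you yourself flag in your closing paragraph. The crux is the case of a shared non-primitive vertex $v$, necessarily interior to both bridges $\mathcal{B}_1=\mathcal{B}_{\{D_1,E_1\}}$ and $\mathcal{B}_2=\mathcal{B}_{\{D_2,E_2\}}$. You correctly observe that the portion of $\mathcal{B}_i$ between $v$ and a primitive end $E_i$ is the corridor $\mathcal{C}_{\{v,E_i\}}$, but you then assert that ``the inverse $L/R$-procedure from a given non-primitive vertex and a given incident $2$-simplex is unique'' and conclude that the two sub-corridors coincide. This does not follow: the $2$-simplex of $\mathcal{B}_1$ at $v$ nearest $E_1$ and the $2$-simplex of $\mathcal{B}_2$ at $v$ nearest $E_2$ are a priori \emph{different} $2$-simplices, so even granting uniqueness of an inverse procedure for a fixed pair (vertex, incident simplex) --- a procedure the paper never defines or proves unique --- you would obtain two possibly different descents to two possibly different primitive disks, and no contradiction. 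What must actually be shown, and what you defer to ``bookkeeping,'' is precisely the content of the second assertion: that a non-primitive disk cannot occur as an interior vertex of two distinct bridges, i.e., that a shared interior vertex determines the incident $2$-simplex and hence, via Lemmas \ref{lem:principal pair of a bridge} and \ref{lem:words for disks in a bridge}, the whole bridge. Closing this requires an argument in the spirit of Claim 2 in the proof of Lemma \ref{lem:principal pair of a bridge} (a word-form analysis of the interior disks ruling out competing configurations), which your proposal identifies as the obstacle but does not supply; as written, the second half of the lemma is not established.
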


\begin{proof}
By Lemma \ref{lem:construction of a bridge}, given any $(p, q)$-shell $\mathcal S_E = \{E_0, E_1, \ldots, E_p\}$, there exists a bridge $\mathcal B_{\{D, E\}} = \{\Delta_1, \Delta_2, \ldots, \Delta_n\}$ such that $\Delta_1 = \{E, E_m, E_{m+1}\}$.
By Lemma \ref{lem:words for disks in a bridge}, such a bridge is unique.
That is, if any bridge has $\Delta_1$ as its first $2$-simplex, then it is exactly $\mathcal B_{\{D, E\}}$.
Given any other bridge $\mathcal B_{\{ \bar{D}, \bar{E}\}}$, by Lemma \ref{lem:primitive disks in a bridge}, we may assume that one of $\bar{D}$ and $\bar{E}$, say $\bar{E}$, is of $(p, q)$-type.
By Lemma \ref{lem:principal pair of a bridge}, there exists a $(p, q)$-shell $\mathcal S_{\bar{E}}$ containing the first $2$-simplex of $\mathcal B_{\{ \bar{D}, \bar{E}\}}$.
Thus $\mathcal B_{\{ \bar{D}, \bar{E}\}}$ is isomorphic to $\mathcal B_{\{D, E\}}$.
The second statement is also a direct consequence of Lemmas \ref{lem:principal pair of a bridge} and \ref{lem:words for disks in a bridge}.
\end{proof}

\subsection{The tree of trees}
We again assume that $(V, W; \Sigma)$  is the genus-$2$ Heegaard splitting of a lens space $L = L(p, q)$ with $1 \leq q \leq p/2$ and $p \not\equiv \pm 1 \pmod q$.
So far, we have seen that for any vertex $E$ of $\mathcal P(V)$, there are infinitely many bridges of which $E$ is an end vertex, and further:
\begin{itemize}
\item any two bridges are isomorphic to each other,
\item any two bridges are disjoint from each other or intersect only in an end vertex,
\item any bridge connects exactly two tree components of $\mathcal P(V)$, and
\item any two tree components of $\mathcal P(V)$ is connected by at most a single bridge.
\end{itemize}

\bigskip

\begin{center}
\begin{overpic}[width=12cm,clip]{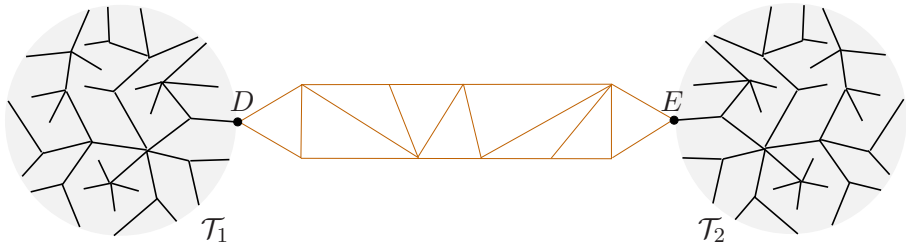}
\linethickness{3pt}
\put(85, 48){$D$}
\put(248, 48){$E$}
\put(74, 0){$\mathcal T_1$}
\put(262, 0){$\mathcal T_2$}
\end{overpic}
\captionof{figure}{The unique bridge connecting the tree components $\mathcal{T}_1$ and $\mathcal{T}_2$ of $\mathcal P(V)$.}
\label{fig:bridge}
\end{center}

\bigskip

Thus, by shrinking each of the tree components of $\mathcal P(V)$ to a vertex, and each of the bridges to an edge connecting the two end vertices, we have a tree $\mathcal T^{\mathcal T} (V)$, which we call the {\it ``tree of trees''} for the splitting $(V, W; \Sigma)$.
We note that each vertex of $\TT (V)$ has infinite valency.
Since the action of the Goeritz group $\mathcal{G}$ of the splitting $(V, W; \Sigma)$
preserves the set of vertices of $\mathcal P(V)$ and the set of bridges,
the action of $\mathcal{G}$ on $\mathcal{D}(V)$ naturally induces
a simplicial action of $\mathcal{G}$ on $\TT(V)$.

\section{Example: the lens space $L(12, 5)$}
\label{sec:first_example}

Let $L(p, q)$ be a lens space with $1 \leq q \leq p/2$.
Let $(V, W; \Sigma)$ be the genus-$2$ Heegaard splitting of $L(p, q)$.
As we have seen in Theorem \ref{thm:contractibility},
the primitive disk complex $\mathcal P(V)$
is contractible if and only if $p \equiv \pm 1 \pmod{q}$.
This implies that the primitive disk complex $\mathcal P(V)$
is contractible for every lens space $L(p,q)$ with $p < 12$.
In this section, we focus on the lens space
$L(12,5)$: the ``smallest" lens space with disconnected primitive disk complex.
Recall that in this case, the primitive disk complex $\mathcal{P}(V)$ consists
of infinitely many tree components.
We describe the combinatorial structure of the primitive disk complex and the briges,
and provide an idea to obtain a presentation of the Goeritz group.
The argument in this section will be soon generalized for
every $L(p,q)$, $1 \leq q \leq p/2$, with $p \not\equiv \pm 1 \pmod{q}$ in
the next section.

Let $(V, W; \Sigma)$ be the genus-$2$ Heegaard splitting of $L(12, 5)$.
Let $E$ be a primitive disk in $V$.
Since $5^2 \equiv 1 \pmod {12}$, $q = q'$ in this case
(recall that $q'$ is the unique integer satisfying
$1 \leq q' \leq p/2$ and $qq' \equiv \pm 1 \pmod p$), $E$ is always of $(12,5)$-type.
Let $\mathcal S_E = \{E_0, E_1, \ldots, E_{12}\}$ be a shell centered at $E$.
Let $E'$ be a unique dual disk of $E$ disjoint from $E_0$, and
let $E_0'$ be a unique semiprimitive disk in $W$ disjoint from $E$.
Lemma \ref{lem:construction of a bridge} says that
there exists a bridge
$\mathcal{B}_{\{ D , E \}} = \{ \Delta_1 , \Delta_2 , \ldots , \Delta_n \}$
connecting $E$ and a certain primitive disk $D$ in $V$ with $\Delta_1 = \{ E, E_2 , E_3 \}$.
We can easily construct that bridge as follows.
The left-hand side in Figure \ref{fig:construction_of_a_bridge}
depicts the 4-holed sphere $\partial V$ cut off by $E \cup E_2$.
We denote by $e^\pm$ and $e_2^\pm$ its boundary
circles coming from $\partial E$ and $\partial E_2$ respectively.

\begin{center}
\begin{overpic}[width=14cm,clip]{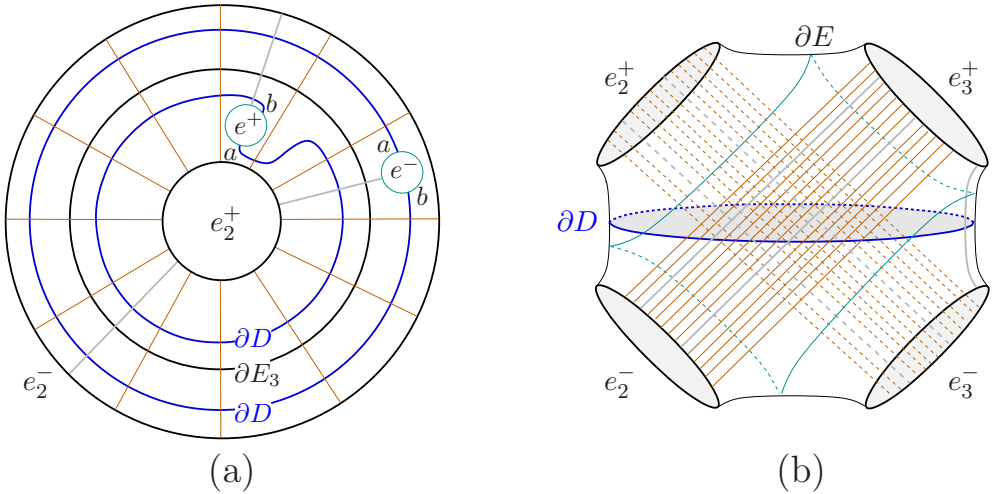}
 \linethickness{3pt}
  \put(100, 127){\small $e^+$}
  \put(159, 110){\small $e^-$}
  \put(91, 90){$e_2^+$}
  \put(20, 30){$e_2^-$}
  \put(96, 117){\small $a$}
  \put(112, 135){\small $b$}
  \put(154, 120){\small $a$}
  \put(169, 100){\small $b$}
  \put(100, 18){\small {\color{blue}$\partial D$}}
  \put(100, 33){\small $\partial E_3$}
  \put(100, 46){\small {\color{blue}$\partial D$}}
  \put(240, 145){$e_2^+$}
  \put(240, 30){$e_2^-$}
  \put(370, 145){$e_3^+$}
  \put(370, 30){$e_3^-$}
  \put(312, 160){$\partial E$}
  \put(222, 90){{\color{blue}$\partial D$}}
  \put(90, -5){\Large (a)}
  \put(305, -5){\Large (b)}
\end{overpic}
\captionof{figure}{(a) The $4$-holed sphere $\partial V$ cut off by $E_2 \cup E$. (b) The $4$-holed sphere $\partial V$ cut off by $E_2 \cup E_3$.}
\label{fig:construction_of_a_bridge}
\end{center}

\bigskip

In Figure \ref{fig:construction_of_a_bridge} (a), $\partial E_0'$ separates the 4-holed sphere
into 12 rectangles, and $\partial E_2$ appears as 3 segments.
Assigning symbols $x$ and $y$ to $\partial E'$ and $\partial E'_0$ with
appropriate orientations respectively,
the simple closed curves $\partial E_2$ and $\partial E_3$
(with appropriate orientations) represents
the elements $xy^5 xy^{12}$ and $(xy^5)^2 xy^2$ respectively,
of the free group $\pi_1 (W) = \langle x, y \rangle$.
Let $D$ be a disk whose boundary circle is described in the figure.
The disk $D$ intersects $E$ transversely in an arc, and
the simple closed curve $\partial D$ represents the element
$(xy^5)^4 xy^4$ in $\pi_1 (W)$.
This is a primitive element in $\pi_1 (W)$, see e.g. Osborne-Zieschang \cite{OZ}.
Hence, $D$ is a primitive disk in $V$ by Lemma \ref{lem:primitive_element}.
We can see the disk $D$ using Figure \ref{fig:construction_of_a_bridge} (b) as well.
The figure illustrates the 4-holes sphere
$\partial V$ cut off by $E_2 \cup E_3$ instead of $E \cup E_2$.
As a consequence, setting $\Delta_1 = \{ E, E_2, E_3 \}$, $\Delta_2 = \{ E_2, E_3, D \}$,
$\{ \Delta_1, \Delta_2 \}$ forms the bridge $\mathcal{B}_{ \{ D, E \} }$
connecting $D$ and $E$, see Figure \ref{fig:bridge_L12_5}.

\bigskip

\begin{center}
\begin{overpic}[width=3.5 cm,clip]{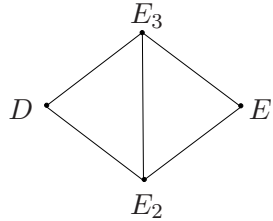}
 \linethickness{3pt}
  \put(-1, 36){$D$}
  \put(90, 36){$E$}
  \put(45, 72){$E_3$}
  \put(45, 0){$E_2$}
\end{overpic}
\captionof{figure}{The bridge $\mathcal{B}_{ \{ D, E \} }$.}
\label{fig:bridge_L12_5}
\end{center}

\bigskip

We note that by Lemmas \ref{lem:construction of a bridge},
\ref{lem:principal pair of a bridge} and \ref{lem:uniqueness of a bridge},
every bridge in $\mathcal{D}(V)$ for $L(12,5)$
is constructed in this way.
In particular, every bridge is isomorphic to
$\mathcal{B}_{ \{ D, E \} }$ above.
The subcomplex
$\mathcal{P}(V) \cup (\bigcup_{\mathcal{B}} \mathcal{B})$ of $\mathcal D(V)$, where
$\mathcal{B}$ runs over all bridges in $\mathcal{D}(V)$,
is connected.
Moreover, since the dual of the disk complex $\mathcal{D}(V)$ is a tree,
$\mathcal{P}(V) \cup (\bigcup_{\mathcal{B}} \mathcal{B})$
is contractible.
This allows us to define a tree $\TT(V)$ as follows.
The vertices of $\TT(V)$ are the components of $\mathcal{P} (V)$,
and two vertices $\mathcal{T}_1$ and $\mathcal{T}_2$ of
$\TT(V)$ span an edge if and only
if there exists a bridge $\mathcal{B}_{\{ D, E \}}$ with
$E \in \mathcal{T}_1$ and $D \in \mathcal{T}_2$.
See Figure \ref{fig:TT_L12_5}.
Note that $\TT(V)$ is not locally finite.

The action of the Goeritz group $\mathcal{G}$ on the disk complex
$\mathcal{D}(V)$ induces, in a natural way, a simplical action
of $\mathcal{G}$ on $\TT(V)$ as well.
Let $\mathcal{B}_{\{D, E\}}$ be a bridge (so an edge of $\TT(V)$) connecting two tree components
$\mathcal{T}_1$ and $\mathcal{T}_2$ with $E \in \mathcal{T}_1$ and $D \in \mathcal{T}_2$.
In the next section, we will show the following in a general setting:
\begin{enumerate}
\item
The action of $\mathcal{G}$
on the set of vertices $($edges, respectively$)$ of $\TT(V)$ is transitive
(cf. Lemma \ref{lem:transitivity on the vertices of TT}).
\item
There exists an element $\tau$ of $\mathcal{G}$ that
preserves the bridge $\mathcal{B}_{\{ D , E \} }$ but
exchanges $D$ and $E$
(cf. Lemma \ref{lem:stabilizers of edges of TT}
(\ref{item:stabilizers of edges of TT (1)})).
\end{enumerate}

Let $(\TT)'$ be the first barycentric subdivision
of $\TT$.
By Lemmas \ref{lem:transitivity on the vertices of TT} and \ref{lem:stabilizers of edges of TT},
the Goeritz group $\mathcal{G}$ acts on the set of edges of $(\TT)'$
transitively without inverting edges, and the two endpoints of each edge belong to different
orbits of vertices under the action of $\mathcal{G}$.
Hence, the quotient of $(\TT)'$ by the action of $\mathcal{G}$
is a single edge with two vertices.
Now we can use the Bass-Serre theory (cf. Theorem \ref{thm:theorem by Brown}) for this group action,
and hence, we can express $\mathcal{G}$ as the following amalgamated free product:
\[ \mathcal{G}_{\{ \mathcal{T}_1 \}} *_{\mathcal{G}_{ \{ \mathcal{T}_1 , \mathcal{T}_2 \} }}
\mathcal{G}_{\{ \mathcal{T}_1 \cup \mathcal{T}_2 \}} . \]
Here by $\mathcal{G}_{\{ \mathcal{T}_1 \}}$, $\mathcal{G}_{\{ \mathcal{T}_1 \cup \mathcal{T}_2 \}}$ and
$\mathcal{G}_{\{ \mathcal{T}_1 , \mathcal{T}_2 \}} $ we mean
the isotropy subgroups of $\mathcal{G}$ with respect to
the vertex $\mathcal{T}_1$, the unordered pair of $\mathcal{T}_1$ and $\mathcal{T}_2 $, and
the ordered pair of $\mathcal{T}_1$ and $\mathcal{T}_2 $ respectively.
In the next section, we give finite presentations of these 3 subgroups
(cf. Lemmas
\ref{lem:stabilizers of vertices of TT} and \ref{lem:stabilizers of edges of TT}).
In this way, obtain a presentation of the Goeritz group $\mathcal{G}$.

\begin{center}
\begin{overpic}[width=14cm,clip]{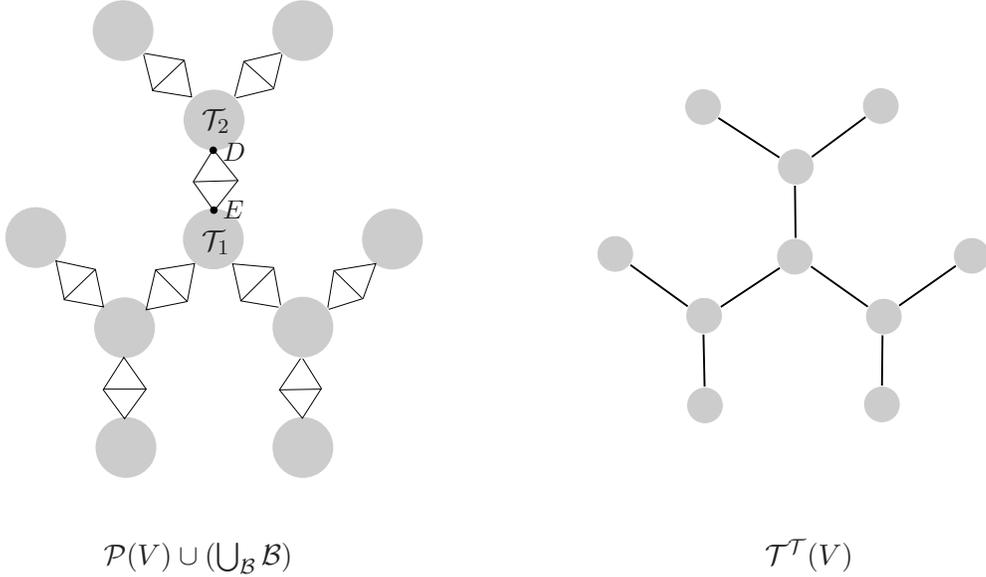}
 \linethickness{3pt}
  \put(87, 185){$\mathcal T_2$}
  \put(87, 139){$\mathcal T_1$}
  \put(95, 173){\small $D$}
  \put(95, 151){\small $E$}
  \put(50, 20){$\mathcal{P}(V) \cup (\bigcup_{\mathcal{B}} \mathcal{B})$}
  \put(300, 20){$\TT(V)$}
\end{overpic}
\captionof{figure}{The subcomplex $\mathcal{P}(V) \cup (\bigcup_{\mathcal{B}} \mathcal{B})$ of $\mathcal{D}(V)$, and the tree $\TT(V)$.}
\label{fig:TT_L12_5}
\end{center}

\section{The mapping class groups of the genus-$2$ Heegaard splittings for lens spaces}
\label{sec:main_section}

Let $\mathcal G$ be the genus-$2$ Goeritz group of a lens space $L(p,q)$ with $1 \leq q \leq p/2$, and let $(V, W; \Sigma)$ be a genus-$2$ Heegaard splitting of $L(p,q)$.
Throughout the section, we will assume that $p \not\equiv \pm 1 \pmod q$, and we will fix the followings:
\begin{itemize}
\item
A primitive disk $E$ of $(p,q)$-type in $V$;
\item
A $(p, q)$-shell $\mathcal{S}_E = \{E_0, E_1, \ldots, E_p \}$ centered at $E$;
\item
The unique $(p, q')$-shell $\mathcal{S}_C = \{ C_0, C_1, \ldots, C_p \}$
centered at $C = E_{q'}$
such that $E=C_q$ (cf. Lemma \ref{lem:sequence} (3));
\item
The component $\mathcal{T}_1$ of $\mathcal{P}(V)$, which is a tree, that contains
$E$ (and so $C$);
\item
The unique bridge $\mathcal{B}_{\{ D, E \}} = \{ \Delta_1 , \Delta_2, \ldots, \Delta_n \}$,
with $\Delta_1 = \{E, E_m, E_{m+1}\}$ where $m$ is the integer satisfying $p = qm + r$ for $2 \leq r \leq q-2$. Note that $D$ is of $(p,q')$-type;
\item
The component $\mathcal{T}_2$ of $\mathcal{P}(V)$ that contains
$D$.
\end{itemize}
See Figure \ref{fig:setting}.

\begin{center}
\begin{overpic}[width=13cm,clip]{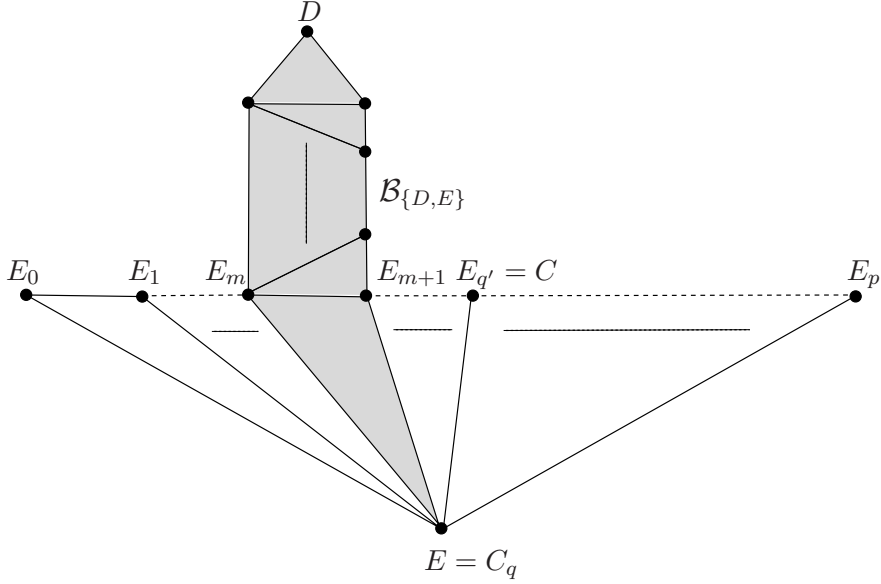}
 \linethickness{3pt}
  \put(132, 218){$D$}
  \put(163, 150){\large $\mathcal B_{\{D, E\}}$}
  \put(22, 120){$E_0$}
  \put(68, 120){$E_1$}
  \put(97, 120){$E_m$}
  \put(162, 120){$E_{m+1}$}
  \put(192, 120){$E_{q'} = C$}
  \put(340, 120){$E_p$}
  \put(180, 10){$E = C_{q}$}
\end{overpic}
\captionof{figure}{The primitive disks $E$, $C$, and $D$.}
\label{fig:setting}
\end{center}

We use the above four primitive disks $E$, $C$, $E_1$, $C_1$ to describe the orbits of the action of
the Goeritz group $\mathcal G$ to the set of primitive pairs.

\begin{lemma}[Lemmas 5.2 and 5.3 \cite{CK15b}]
\label{lem:number of orbits of primitive disks and pairs}

\begin{enumerate}
\item
If $q^2 \equiv 1 \pmod p$,
the action of the Goeritz group $\mathcal{G}$ on
the set of vertices of the primitive disk complex $\mathcal P(V)$ is transitive.
Further, the action of $\mathcal{G}$
on the set of edges of $\mathcal P(V)$ has
exactly $2$ orbits $\mathcal{G} \cdot \{ E, C \}$ and $\mathcal{G} \cdot \{ E, E_1 \}$.
The two end points of each of the edges $\{ E, C \}$ and $\{ E, E_1 \}$ can be exchanged
by the action of $\mathcal{G}$.
\item
If $q^2 \not\equiv 1 \pmod p$, the action of $\mathcal{G}$ on
the set of vertices of $\mathcal P(V)$ has exactly two orbits
$\mathcal{G} \cdot \{E\}$ and $\mathcal{G} \cdot \{C\}$.
Further, the action of $\mathcal{G}$ on the set of edges of $\mathcal P(V)$ has
exactly $3$ orbits $\mathcal{G} \cdot \{ E, C \}$,
$\mathcal{G} \cdot \{ E, E_1 \}$ and $\mathcal{G} \cdot \{ C, C_1 \}$.
The two end points of each of the edges $\{ E, E_1 \}$ and $\{ C, C_1 \}$
can be exchanged by the action of $\mathcal{G}$ whereas those of $\{ E, C \}$ cannot.
\end{enumerate}
\end{lemma}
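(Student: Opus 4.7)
The plan is to separate the problem into two steps: (i) exhibit Goeritz-invariants that give a lower bound on the number of orbits and force the non-flippability of $\{E,C\}$ in case (2), and then (ii) construct explicit elements of $\mathcal{G}$ that realize transitivity within each candidate orbit.

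First I would formalize the \emph{type} as a Goeritz-invariant. By the homological computation carried out in the proof of Lemma \ref{lem:primitive disks in a bridge}, the homology class $\pm[l_F]\in H_1(L(p,q);\Integer)=\Integer/p\Integer$ of the core of $W_F=\cl(L(p,q)-V_F)$ equals $\pm 1$ or $\pm\bar q$ according as $F$ is of $(p,q)$- or $(p,\bar q)$-type. Any element of $\mathcal{G}$ preserves $V$ setwise, hence preserves this invariant. When $q^2\equiv 1\pmod p$ the two possible values of $\pm[l_F]$ coincide, so type is trivial; when $q^2\not\equiv 1\pmod p$ they are distinct, which immediately forces $\mathcal{G}\cdot\{E\}$ and $\mathcal{G}\cdot\{C\}$ to be disjoint, and forces $\{E,C\}$ to be non-flippable since flipping would exchange the two types. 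For edges in case (2), the unordered type pair $\{\tau(F_1),\tau(F_2)\}\in\{\{q,q\},\{q,q'\},\{q',q'\}\}$ is also Goeritz-invariant and distinguishes the three candidate orbits.

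Second I would establish transitivity within each orbit by constructing explicit elements of $\mathcal{G}$. By Lemma \ref{lem:first_surgery} the pair $\{E,E_1\}$ shares a common dual disk, and the natural symmetry of the configuration $\{E,E_1,E'\}$ (three disks in $V\cup W$ pairwise dual to each other up to the shared piece) extends to an involution $\alpha\in\mathcal{G}$ swapping $E$ and $E_1$; an analogous involution swaps $C$ and $C_1$. Iterating $\alpha$ one walks along a monochromatic chain of $(p,q)$-type vertices inside the tree $\mathcal{T}_1$; combined with the analogous move at the other end of the shell, this exhausts the $(p,q)$-type vertices of $\mathcal{T}_1$. To pass between different tree components of $\mathcal{P}(V)$ I would invoke Lemma \ref{lem:uniqueness of a bridge}: all bridges are mutually isomorphic, and by realizing such isomorphisms through ambient homeomorphisms one exhibits Goeritz elements sending $\mathcal{T}_1$ to any other component, completing transitivity of $\mathcal{G}$ on $(p,q)$-type primitive disks, and symmetrically on $(p,q')$-type primitive disks. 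The analysis of edges is parallel: the local involutions above already flip $\{E,E_1\}$ and $\{C,C_1\}$, and the transitivity on each type-pair follows from the vertex transitivity together with the tree structure of $\mathcal{T}_1$ and $\mathcal{T}_2$. In case (1), since $q=q'$, the pair $\{E,C\}$ has matching types and the flip is realized by a further involution that comes from the extra symmetry $qq'\equiv\pm 1\pmod p$ with $q=q'$, essentially a ``half-shell rotation'' sending the $(p,q)$-shell $\mathcal{S}_E$ to the $(p,q)$-shell $\mathcal{S}_C$.

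The main obstacle is the explicit construction of these Goeritz elements, in particular the flip of $\{E,C\}$ in case (1). For the pairs with a common dual disk the involutions come for free from the local symmetry, but the flip of $\{E,C\}$ requires a genuinely global symmetry of the Heegaard splitting that exists only when $q=q'$; producing it demands a careful assembly from Dehn twists along reducing spheres and handle-slide homeomorphisms, as in the constructions carried out in \cite{C2} and \cite{CK15b}. Once these generators are in hand, verifying that no other orbits exist reduces to a local inspection of the primitive disks adjacent to $E$ using the shell $\mathcal{S}_E$ and Lemma \ref{lem:sequence}: any primitive disk disjoint from $E$ in $\mathcal{T}_1$ is, up to the already-constructed $\mathcal{G}$-action, one of $E_1,E_{q'},E_{p-q'},E_{p-1}$, and these populate precisely the orbits claimed.
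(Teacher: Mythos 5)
First, note that the paper does not actually prove this lemma: it is imported verbatim from \cite{CK15b} (Lemmas 5.2 and 5.3), so there is no in-paper argument to compare against. Judged on its own terms, the first half of your proposal is sound: the homology class of the core of $W_F=\cl(L(p,q)-V_F)$ is exactly the invariant computed in the proof of Lemma \ref{lem:primitive disks in a bridge}, it is manifestly $\mathcal{G}$-invariant, and it correctly forces the two vertex orbits and the non-flippability of $\{E,C\}$ when $q^2\not\equiv 1\pmod p$, as well as the three candidate edge orbits.

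The transitivity half, however, has genuine gaps. (a) ``Iterating $\alpha$'' cannot exhaust anything: a single involution swapping $E$ and $E_1$ generates a group of order two, and even the group generated by all such local flips only carries you along one chain, whereas every vertex of $\mathcal{T}_1$ has infinitely many primitive neighbours (there are infinitely many shells centered at $E$, coming from the choices of $E_0$ and of $E_1$). What you actually need is that the stabilizer $\mathcal{G}_{\{E\}}$ acts transitively on these shells --- this is precisely Lemma \ref{lem:transitivity on shells}, which your argument silently presupposes rather than proves; the infinite-order element $\beta$ of Lemma \ref{lem:stabilizer of a primitive disks and pairs} is what does this work. (b) Passing between tree components by ``realizing bridge isomorphisms through ambient homeomorphisms'' is exactly the point at issue: an abstract simplicial isomorphism between subcomplexes of $\mathcal{D}(V)$ need not be induced by a diffeomorphism preserving the splitting, and in the paper's logical order the transitivity on bridges (Lemma \ref{lem:transitivity on the vertices of TT}) is \emph{deduced from} the present lemma together with Lemma \ref{lem:transitivity on shells}, so invoking it here is circular. (c) Likewise, ``any primitive disk disjoint from $E$ is, up to the already-constructed action, one of $E_1,E_{q'},E_{p-q'},E_{p-1}$'' again uses transitivity on shells. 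The route that avoids all of this, and is the one taken in \cite{CK15b}, is to observe that for a primitive disk $D$ with dual disk $D'$ the complement of $\Nbd(D\cup D')$ inherits a genus-$1$ Heegaard splitting of the punctured lens space; uniqueness of genus-$1$ splittings \cite{BO83} then gives transitivity on all such pairs of a fixed type in one stroke, with no reference to components of $\mathcal{P}(V)$ at all. Your sketch would need to be rebuilt around an argument of that kind before the orbit counts and the flip of $\{E,C\}$ in case (1) can be considered established.
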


\begin{lemma}
\label{lem:transitivity on shells}
Let $A$, $B$ be primitive disk in $V$.
Let $\mathcal{S}_A = \{ A_0, A_1, \ldots , A_p \}$ and
$\mathcal{S}_B = \{ B_0, B_1, \ldots , B_p \}$ be
shells centered at $A$ and $B$ respectively.
Suppose that there exists an element of $\mathcal{G}$ that
maps $B$ to $A$.
Then there exists an element $\varphi$ of $\mathcal{G}$
satisfying $\varphi (B) = A$ and $\varphi (B_i) = A_i$ $(i \in \{ 0, 1, \ldots , p \})$.
\end{lemma}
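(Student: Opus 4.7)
The plan is to reduce the lemma to a transitivity statement for the stabilizer $\mathcal{G}_A := \{\eta \in \mathcal{G} : \eta(A) = A\}$. Given the hypothesized element $\psi \in \mathcal{G}$ with $\psi(B) = A$, the image $\psi(\mathcal{S}_B) = \{\psi(B_0), \psi(B_1), \ldots, \psi(B_p)\}$ is itself an ordered shell centered at $A$, because the shell construction is equivariant under self-diffeomorphisms of $L$ preserving the splitting. The goal then becomes producing $\eta \in \mathcal{G}_A$ with $\eta(\psi(B_i)) = A_i$ for every $i$; the composition $\varphi := \eta \circ \psi$ will satisfy the conclusion.

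Next, an ordered shell $\{A_0, A_1, \ldots, A_p\}$ at $A$ is determined by the ordered pair $(A_0, A_1)$, since $A_2, A_3, \ldots, A_p$ are then inductively forced by the band-sum construction recalled in the definition of a shell. Accordingly, the task splits into two sub-claims: first, that $\mathcal{G}_A$ acts transitively on the set of semiprimitive disks $A_0 \subset V$ disjoint from $A$ that appear as the zeroth disk of a shell at $A$; and second, that for any fixed such $A_0$, the further stabilizer $\mathcal{G}_{A, A_0}$ acts transitively on the admissible choices of $A_1$, i.e., on the non-separating disks in $V$ disjoint from $A \cup A_0$ and non-isotopic to either $A$ or $A_0$.

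To establish the first sub-claim I would exploit the primitivity of $A$. Choosing a dual disk $A' \subset W$, the neighborhood $N = \Nbd(A \cup A')$ is a $3$-ball in $L$ and its complement is a solid torus $U$ realizing the genus-one Heegaard decomposition of $L(p,q)$ refined by $\{A, A'\}$. A suitable Dehn twist of $U$ along a curve on its boundary torus extends trivially across $N$ to give a self-diffeomorphism of $L$ that fixes $A$ setwise and preserves each of $V$ and $W$, hence an element of $\mathcal{G}_A$; such twists permute the semiprimitive disks in $V$ disjoint from $A$, and one checks that the action is transitive. For the second sub-claim, once $A$ and $A_0$ are fixed the residual freedom in the shell is only the choice of band-sum arc $\alpha_0$ on $\Sigma$, and one realizes any change of $\alpha_0$ by a twist supported in a neighborhood of $A \cup A_0$ in $\Sigma$ that extends to an element of $\mathcal{G}_{A, A_0}$, again by the primitivity of $A$.

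The main obstacle will be the concrete construction of these symmetries and the verification that they genuinely belong to $\mathcal{G}$. Each candidate diffeomorphism is natural as a self-map of $V$, or of the complementary solid torus $U$, and the delicate point is that it extends over the opposite handlebody; this is exactly where the primitivity hypothesis is used. The arguments will mirror the constructions in \cite{CK15b}, but must be carried out in the regime $p \not\equiv \pm 1 \pmod q$, where $\mathcal{P}(V)$ is disconnected and fewer generators of $\mathcal{G}_A$ are readily available from earlier work.
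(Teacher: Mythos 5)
The paper itself states this lemma with no proof at all, so there is no argument of the authors to compare against line by line; they evidently regard it as a consequence of the transitivity results of Section~5 of \cite{CK15b}. Your skeleton is the natural one and is surely what is intended: $\psi(\mathcal{S}_B)$ is again a shell centered at $A$, a shell centered at $A$ is determined by the ordered pair $(A_0,A_1)$ (the paper's remark that once $E_1$ is chosen the disks $E_2,\ldots,E_p$ are forced), and so it suffices to show that the stabilizer of $A$ acts transitively on such pairs, splitting this into transitivity on the $A_0$'s and then on the $A_1$'s. That reduction is correct.

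The gap is in the two transitivity sub-claims, and the second is false as you state it. By Lemma \ref{lem:sequence}(2) the disk $A_1$ of any shell is \emph{primitive}, whereas you ask $\mathcal{G}_{A,A_0}$ to act transitively on \emph{all} non-separating disks disjoint from $A\cup A_0$ and non-isotopic to $A$ or $A_0$. These sets differ: every such disk is a band sum of $A$ and $A_0$ along some arc $\delta$, and if $\delta$ cannot be isotoped off $\partial A'\cup\partial A_0'$ the resulting boundary word need not be primitive (for instance, banding the words $x$ and $y^p$ along an arc reading $yx$ gives $xyxy^{p}x^{-1}y^{-1}$, which is cyclically reduced and contains both $x$ and $x^{-1}$, hence is non-primitive by Lemma \ref{lem:property of primitive elements}). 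Since every element of $\mathcal{G}$ preserves primitivity, no element of $\mathcal{G}_{A,A_0}$ carries a primitive band sum to a non-primitive one, so transitivity on your set cannot hold; what you need is transitivity on the primitive disks disjoint from $A\cup A_0$ other than $A$. Moreover the group elements that are supposed to realize the transitivity are only gestured at, and the gestures are problematic: the complement of the ball $\Nbd(A\cup A')$ in $L(p,q)$ is a punctured lens space, not a solid torus; and a homeomorphism supported in a neighborhood of $\partial A\cup\partial A_0$ in $\Sigma$ is a product of powers of Dehn twists along $\partial A$ and $\partial A_0$, no nontrivial such product of which extends over $W$, since on $H_1(W;\Integer)$ it moves the classes of $\partial A'$ or $\partial A_0'$ off the kernel of $H_1(\Sigma)\to H_1(W)$ (here one uses that $\partial A$ represents the primitive element $x$). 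The elements that actually do the work are those of Lemma \ref{lem:stabilizer of a primitive disks and pairs} --- the half-twist $\beta$ along the reducing sphere $\partial\Nbd(A\cup A')$ and the involution $\gamma$ exchanging two disjoint dual disks of $A$ --- and the two transitivity statements you need (on dual disks of $A$, equivalently on the $A_0$'s, and on the primitive disks adjacent to $A\cup A_0$) are exactly what is established in Section~5 of \cite{CK15b}; your proof is incomplete until those are invoked or reproved in the present setting.
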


\begin{lemma}
\label{lem:transitivity on the vertices of TT}
The action of $\mathcal{G}$
on the set of vertices $($edges, respectively$)$ of $\TT(V)$ is transitive.
\end{lemma}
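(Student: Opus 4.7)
The plan is to reduce both transitivity claims to orbit control that is already provided by earlier lemmas. For the vertices of $\TT(V)$, I would start with an arbitrary tree component $\mathcal{T}$ of $\mathcal{P}(V)$ and pick any primitive disk $A \in \mathcal{T}$. Since $\mathcal{G}$ acts simplicially on $\mathcal{P}(V)$ and therefore permutes its tree components, it suffices to find $\varphi \in \mathcal{G}$ with $\varphi(A) \in \mathcal{T}_1$. Lemma \ref{lem:number of orbits of primitive disks and pairs} provides this: depending on the type of $A$, it lies either in the $\mathcal{G}$-orbit of $E$ or in that of $C$, and both $E$ and $C$ are vertices of the same tree component $\mathcal{T}_1$. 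Any such $\varphi$ then carries the component of $A$ onto the component of $\varphi(A)$, which is $\mathcal{T}_1$.

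For the edges of $\TT(V)$, I would take an arbitrary bridge $\mathcal{B}_{\{D', E'\}}$ and move it onto $\mathcal{B}_{\{D, E\}}$ in two stages. By Lemma \ref{lem:primitive disks in a bridge} the two endpoints of any bridge have different types, so after relabeling I may assume $E'$ is of $(p, q)$-type. First, invoke Lemma \ref{lem:number of orbits of primitive disks and pairs} to pick $\varphi_1 \in \mathcal{G}$ with $\varphi_1(E') = E$, thereby producing a bridge $\varphi_1(\mathcal{B}_{\{D', E'\}})$ with endpoint $E$. Second, by Lemma \ref{lem:principal pair of a bridge} this bridge determines a unique $(p, q)$-shell $\mathcal{S}'_E$ centered at $E$ whose $(m, m+1)$-disks form the principal pair of $\varphi_1(\mathcal{B}_{\{D', E'\}})$ at $E$. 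Applying Lemma \ref{lem:transitivity on shells} with $A = B = E$ (using the identity as the required element of $\mathcal{G}$) gives $\varphi_2 \in \mathcal{G}$ fixing $E$ and carrying $\mathcal{S}'_E$ onto the fixed shell $\mathcal{S}_E$ disk-by-disk. Then $\varphi_2 \circ \varphi_1$ sends the original bridge onto one whose first $2$-simplex equals $\Delta_1 = \{E, E_m, E_{m+1}\}$, and Lemma \ref{lem:words for disks in a bridge} tells us such a bridge is uniquely determined by its first $2$-simplex, forcing the image to be exactly $\mathcal{B}_{\{D, E\}}$.

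The point that requires the most attention is the type bookkeeping in the generic case $q^2 \not\equiv 1 \pmod p$, where Lemma \ref{lem:number of orbits of primitive disks and pairs} produces two distinct $\mathcal{G}$-orbits of primitive disks; the reason one can always align $E'$ with $E$ rather than with $C$ is precisely Lemma \ref{lem:primitive disks in a bridge}, which guarantees that the two endpoints of a bridge have opposite types. Once this observation is in place, the remainder is a straightforward assembly of Lemmas \ref{lem:primitive disks in a bridge}, \ref{lem:principal pair of a bridge}, \ref{lem:words for disks in a bridge}, and \ref{lem:transitivity on shells}, and I do not expect a substantive obstacle beyond this matching of types.
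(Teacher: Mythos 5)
Your proof is correct and follows essentially the same route as the paper: transitivity on vertices via Lemma \ref{lem:number of orbits of primitive disks and pairs} together with the fact that $E$ and $C$ lie in the same component, and transitivity on edges via Lemmas \ref{lem:primitive disks in a bridge}, \ref{lem:principal pair of a bridge}, \ref{lem:transitivity on shells} and the uniqueness of the bridge with first simplex $\Delta_1$. The only cosmetic difference is that you factor the edge argument into two steps (first send $E'$ to $E$, then apply Lemma \ref{lem:transitivity on shells} with $A=B=E$ to align the shells), whereas the paper applies Lemma \ref{lem:transitivity on shells} once to match the shell at $\bar{E}$ directly with $\mathcal{S}_E$.
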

\begin{proof}
If $q^2 \equiv 1 \pmod p$, the action of $\mathcal{G}$ is transitive
on the set of primitive disks by Lemma \ref{lem:number of orbits of primitive disks and pairs} (1),
which implies that $\mathcal{G}$ acts transitively on the set of vertices of $\TT(V)$.
If $q^2 \not\equiv 1 \pmod p$, each connected component of $\mathcal{P}(V)$ contains
vertices of both $(p,q)$ and $(p,q')$-types.
Thus, it follows from Lemma \ref{lem:number of orbits of primitive disks and pairs} (2)
that $\mathcal{G}$ acts transitively on the set of vertices of $\TT(V)$.

Let $\mathcal{B}_{\{ \bar{D} , \bar{E} \}} =
\{ \bar{\Delta}_1 , \bar{\Delta}_2 , \ldots , \bar{\Delta}_n \}$ be an arbitrary bridge.
By Lemma \ref{lem:primitive disks in a bridge}, we can assume without
loss of generality that
$\bar{D} \in \mathcal{G} \cdot D$,
$\bar{E} \in \mathcal{G} \cdot E$.
Let $\{ E_* , E_{**} \}$ ($\{ \bar{E}_* , \bar{E}_{**} \}$, respectively)
be the principal pair of $E$ ($\bar{E}$, respectively) with respect to
$D$ ($\bar{D}$, respectively).
Then by Lemma \ref{lem:principal pair of a bridge},
we have $\{ E_* , E_{**} \} = \{ E_m , E_{m+1} \}$, and also
there exists a $(p, q)$-shell
$\mathcal S_{\bar{E}} = \{\bar{E}_0, \bar{E}_1, \ldots, \bar{E}_p\}$ centered at
$\bar{E}$ such that $\{ \bar{E}_* , \bar{E}_{**} \} = \{ \bar{E}_m , \bar{E}_{m+1} \}$.
By Lemma \ref{lem:transitivity on shells}, there exists an element $\varphi$
of the Goeritz group $\mathcal{G}$ satisfying
$g(\bar{E}) = E$ and $g(\bar{E}_i) = E_i$ for $i \in \{ 0 , 1 , \ldots , p \}$.
Then $\varphi$ maps the bridge $\mathcal{B}_{\{\bar{D}, \bar{E}\}}$ to
another bridge
$\varphi (\mathcal{B}_{\{\bar{D}, \bar{E}\}}) = \mathcal{B}_{\{\varphi (\bar{D}), E \}}
= \{ \varphi (\bar{\Delta}_1) , \varphi (\bar{\Delta}_2) , \ldots , \varphi (\bar{\Delta}_n) \}$.
Since $\Delta_1 = \{ E, E_m , E_{m+1} \} = \{ \varphi (\bar{E}) , \varphi (\bar{E}_m), \varphi (\bar{E}_{m+1}) \}
= \varphi (\bar{\Delta}_1)$ we have
$\mathcal{B}_{\{ D , E \}} =  \varphi (\mathcal{B}_{\{ \bar{D} , \bar{E} \}} )$ by
Lemma \ref{lem:uniqueness of a bridge}.
This completes the proof.
\end{proof}

To obtain a finite presentation of the Goeritz group $\mathcal{G}$, we use the
following well-known theorem.
\begin{theorem}[Serre \cite{S}]
\label{thm:theorem by Brown}
Suppose that a group $G$ acts on a tree $\mathcal{T}$
without inversion on the edges.
If there exists a subtree $\mathcal{L}$ of $\mathcal{T}$
such that every vertex $($every edge, respectively$)$ of $\mathcal{T}$ is equivalent modulo $G$
to a unique vertex $($a unique edge, respectively$)$ of $\mathcal{L}$.
Then $G$ is the free product of the isotropy groups $G_v$ of the
vertices $v$ of $\mathcal{L}$, amalgamated along the isotropy groups $G_e$
of the edges $e$ of $\mathcal{L}$.
\end{theorem}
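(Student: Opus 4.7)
The plan is to derive the amalgamated product presentation from the tree action by a standard Bass--Serre argument: build a candidate group from the incidence data in $\mathcal{L}$, map it to $G$, and use the tree structure to show the map is an isomorphism.

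First, let $G^{*}$ denote the group obtained by taking the free product of the vertex isotropy groups $G_{v}$ ($v$ a vertex of $\mathcal{L}$) and imposing, for each edge $e$ of $\mathcal{L}$ with endpoints $u,v \in \mathcal{L}$, the relation that identifies the two copies of $G_{e}$ sitting inside $G_{u}$ and $G_{v}$. Since all vertex stabilizers embed in $G$ and the amalgamation relations manifestly hold there, the universal property yields a homomorphism $\Phi \colon G^{*} \to G$. The goal is to show $\Phi$ is an isomorphism.

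For surjectivity, I would fix a basepoint $v_{0}$ of $\mathcal{L}$, take any $g \in G$, and consider the unique geodesic $\gamma$ in $\mathcal{T}$ from $v_{0}$ to $g v_{0}$. Traversing $\gamma$ edge by edge, at each step the current edge is $G$-equivalent to a unique edge of $\mathcal{L}$ by hypothesis, and one can correct by an element of the stabilizer of the current vertex to push the walk back into $\mathcal{L}$. Iterating yields an expression of $g$ as a product of vertex-stabilizer elements, hence an element of the image of $\Phi$. The no-inversion hypothesis is what guarantees these corrections respect edge orientations.

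For injectivity, suppose a word $w = g_{1} g_{2} \cdots g_{k}$ in the generators of $G^{*}$ (each $g_{i} \in G_{v_{i}}$ for some $v_{i} \in \mathcal{L}$) maps to the identity in $G$. I would translate $w$ into a combinatorial path in $\mathcal{T}$: starting from a chosen vertex, successive prefixes of $w$ trace out a sequence of vertices connected by edges dictated by which vertex stabilizers the $g_{i}$'s lie in. Because $w \mapsto 1$, this path is closed, and because $\mathcal{T}$ is a tree it must contain a backtrack. A backtrack exactly corresponds to two consecutive factors $g_{i} g_{i+1}$ whose product fixes an edge equivalent to one in $\mathcal{L}$, hence lies in (a conjugate of) an edge stabilizer $G_{e}$. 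Using the amalgamation relation, one absorbs this pair and shortens $w$. Induction on the path length then reduces $w$ to the empty word in $G^{*}$, giving injectivity.

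The main obstacle will be the careful bookkeeping in the injectivity step: one must verify that each backtrack really does correspond to an application of an amalgamation relation (not to an orientation-reversal, which is ruled out by the no-inversion hypothesis) and that the subtree property of $\mathcal{L}$, rather than merely being a set-theoretic transversal, is what prevents the appearance of HNN-type stable letters. With $\mathcal{L}$ genuinely a subtree and each orbit of vertices and edges represented exactly once, the quotient graph of groups has trivial underlying graph-theoretic fundamental group, so the fundamental group of the graph of groups is purely amalgamated with no free factors, which is exactly the statement to be proved.
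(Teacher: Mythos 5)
The paper does not prove this statement at all: it is quoted verbatim from Serre's \emph{Trees} (it is essentially Theorem~10 of \S I.4.5 there) and used as a black box, so there is no internal proof to compare against. Your sketch is the standard Bass--Serre argument for that theorem and is essentially sound: surjectivity via connectedness of $\mathcal{T}$ and correction by vertex stabilizers, injectivity via the absence of non-backtracking closed edge paths in a tree. Two points need tightening in the injectivity step. First, after normalizing so that consecutive $v_i$ are adjacent in $\mathcal{L}$ (inserting trivial letters if necessary), the object to track is the sequence of \emph{edges} $g_1\cdots g_{i-1}e_i$ where $e_i$ joins $v_i$ to $v_{i+1}$ in $\mathcal{L}$; tracking vertices alone detects nothing, since each $g_i$ fixes $v_i$. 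Second, at a backtrack it is the single letter $g_i$, not the product $g_ig_{i+1}$, that is forced into an edge group: a backtrack gives $g_i\bar e_{i+1}=e_i$ with both $e_i$ and $e_{i+1}$ in $\mathcal{L}$ and $G$-equivalent, so the uniqueness hypothesis forces $e_{i+1}=\bar e_i$, and the no-inversion hypothesis then forces $g_i\in G_{e_i}$ (an honest edge group of $\mathcal{L}$, not merely a conjugate); the amalgamation relation then absorbs $g_i$ into $G_{v_{i+2}}=G_{v_i}$ and shortens the word. With those corrections your induction closes, terminating with a single vertex-group letter mapping to $1\in G$, which is trivial because each $G_v$ is an actual subgroup of $G$.
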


In the following we will denote by $\mathcal{G}_{\{X_1 , X_2 , \ldots , X_k \}}$ the subgroup of
the genus-2 Goeritz group $\mathcal{G}$ consisting of elements that preserve each of
$X_1$, $X_2 , \ldots , X_k$ setwise, where each $X_i$ will be a subcomplex of $\mathcal{D}(V)$.

\begin{lemma}
\label{lem:stabilizer of a primitive disks and pairs}
\begin{enumerate}
\item
Let $A$ be an arbitrary primitive disk in $V$.
Then we have
$\mathcal{G}_{\{ A \}} =
\langle \alpha \mid \alpha^2 \rangle
\oplus \langle \beta, \gamma \mid
{\gamma}^2 \rangle$, where
$\alpha$ is the hyperelliptic involution of both
$V$ and $W$, $\beta$ is the half-twist along a reducing sphere, and
$\gamma$ exchanges two disjoint dual disks of $A$ as described in Figure ${\rm \ref{fig:isotropy_of_a_primitive_disk}}$.
\item
Let $\{A, B \}$ be an edge of the primitive disk complex $\mathcal P(V)$.
Then we have $\mathcal{G}_{ \{ A, B \} } = \langle \alpha \mid \alpha^2 \rangle$.
If the two end points of the edge $\{ A, B \}$
can be exchanged by the action of $\mathcal{G}$,
then we have
$\mathcal{G}_{ \{ A \cup B \} } =
\langle \alpha \mid \alpha^2 \rangle \oplus
\langle \sigma \mid \sigma^2 \rangle$, where $\sigma$ is an element of $\mathcal G$ exchanging $A$ and $B$.
Otherwise, we have
$\mathcal{G}_{ \{ A \cup B \} } =
\langle \alpha \mid \alpha^2 \rangle$.
\end{enumerate}
\end{lemma}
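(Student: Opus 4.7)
The plan is to reduce both parts to a local analysis at a fixed primitive disk and to invoke Lemma \ref{lem:number of orbits of primitive disks and pairs}. That lemma shows the $\mathcal{G}$-orbits of vertices of $\mathcal{P}(V)$ are represented by $E$ alone (when $q^2 \equiv 1 \pmod p$) or by $E$ and $C$ (when $q^2 \not\equiv 1 \pmod p$), so by the symmetric roles of $(p,q)$- and $(p,q')$-type disks it suffices to verify (1) for $A = E$.

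For part (1), the hyperelliptic involution $\alpha$ extends over both handlebodies, fixes every disk setwise, and lies in the center of $\mathcal{G}$; this produces the $\mathbb{Z}/2$ direct summand. For the complementary factor, pick a dual disk $A'$ of $A$ and consider the $2$-sphere $S_0 = \partial \Nbd(A \cup A')$, which meets $\Sigma$ in a single essential simple closed curve. The element $\beta$ is the half-twist along $S_0$: it preserves $A$ and acts with infinite order on the collection of dual disks of $A$. The element $\gamma$ is an explicit involution exchanging $A'$ with a second dual disk $A''$ disjoint from $A'$; $\gamma^2 = 1$ by construction. Both $\beta$ and $\gamma$ commute with $\alpha$ since $\alpha$ is central.

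The substantive step is to show that $\{\alpha, \beta, \gamma\}$ generate $\mathcal{G}_{\{A\}}$ with exactly the stated relations. For generation, I would apply an arbitrary $\varphi \in \mathcal{G}_{\{A\}}$ to a fixed dual disk $A'$; since $\varphi(A')$ is another dual disk of $A$, and the set of dual disks of $A$ carries a tractable combinatorial structure on which $\beta$ and $\gamma$ act transitively, $\varphi$ can be reduced modulo $\langle \beta, \gamma \rangle$ to an element fixing the pair $\{A, A'\}$. Any such element restricts to a mapping class of $\Nbd(A \cup A')$ (a $3$-ball) and of its complement (a solid torus with a standard identification), leaving only the identity and $\alpha$. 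For the absence of relations between $\beta$ and $\gamma$, one exhibits a faithful representation of $\langle \beta, \gamma \mid \gamma^2 \rangle$, most naturally via the induced action on $\pi_1(W) \cong F_2$ or on the Bass--Serre tree of $\mathbb{Z} \ast \mathbb{Z}/2$.

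Part (2) then follows formally. Any $\varphi \in \mathcal{G}_{\{A\}} \cap \mathcal{G}_{\{B\}}$ must, by the generation established in (1), lie in $\langle \alpha, \beta, \gamma \rangle$ and also fix $B$; examining the action of $\beta$ and $\gamma$ on $B$ shows this forces $\varphi \in \langle \alpha \rangle$. When $\{A, B\}$ can be swapped, an explicit swapping involution $\sigma$ is constructed (using the obvious shell symmetry for edges of the form $\{E, E_1\}$ or $\{C, C_1\}$, and a geometric symmetry of the lens space for $\{E, C\}$), and one checks $\sigma^2 = 1$ and $[\sigma, \alpha] = 1$; otherwise no swapping exists and only $\langle \alpha \rangle$ survives. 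The main obstacle I anticipate is the absence-of-relations argument in part (1): selecting the right faithful representation is the crux, and it requires organizing the infinitely many dual disks of $A$ so that the interplay between the twist $\beta$ and the exchange $\gamma$ becomes transparent.
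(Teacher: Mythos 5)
This lemma is stated in the paper without proof: it is imported from the authors' earlier work \cite{CK15b}, where the presentation of $\mathcal{G}_{\{A\}}$ is obtained by Bass--Serre theory. There, one lets $\mathcal{G}_{\{A\}}$ act on an explicit tree built from the dual disks of $A$ (and the associated semiprimitive disks), shows the quotient is a single edge, and identifies the vertex stabilizers as $\langle\alpha\rangle\oplus\langle\beta\rangle$ and $\langle\alpha\rangle\oplus\langle\gamma\rangle$ with edge stabilizer $\langle\alpha\rangle$; Theorem \ref{thm:theorem by Brown} then gives $\mathcal{G}_{\{A\}}\cong(\langle\alpha\rangle\oplus\langle\beta\rangle)\ast_{\langle\alpha\rangle}(\langle\alpha\rangle\oplus\langle\gamma\rangle)=\langle\alpha\mid\alpha^2\rangle\oplus\langle\beta,\gamma\mid\gamma^2\rangle$, generators and relations in one stroke. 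Your identification of $\alpha$, $\beta$, $\gamma$ and the reduction to $A=E$ (or $C$) via Lemma \ref{lem:number of orbits of primitive disks and pairs} agrees with this.

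The gap is that the two steps carrying all the content of part (1) are asserted rather than carried out. First, ``the set of dual disks of $A$ carries a tractable combinatorial structure on which $\beta$ and $\gamma$ act transitively'' is precisely what must be proved: one has to construct that structure (it is a tree), establish the transitivity, and compute the isotropy groups of a vertex and an edge; without this, neither generation nor the relation set follows. Second, you explicitly leave open the ``no further relations'' step and propose to exhibit a faithful representation of $\langle\beta,\gamma\mid\gamma^2\rangle$; this is the hard direction of your route and is exactly what the Bass--Serre argument is designed to bypass, since once the quotient graph and stabilizers are known no separate faithfulness argument is needed. Likewise, in part (2) the assertion that ``examining the action of $\beta$ and $\gamma$ on $B$'' forces $\varphi\in\langle\alpha\rangle$ is not yet an argument: $\langle\beta,\gamma\rangle$ is infinite, so one must genuinely rule out nontrivial elements fixing $B$. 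The standard way --- and the one the present paper uses in the analogous computation in Lemma \ref{lem:stabilizers of edges of TT} --- is the Alexander-trick argument: an element fixing $A$, $B$, a common dual disk and the associated semiprimitive disks cuts $V$ and $W$ into balls on which it is determined up to isotopy, leaving only the identity and $\alpha$. As written, your text is a correct plan with the right generators and the right target presentation, but the essential constructions and verifications are missing.
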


\begin{center}
\begin{overpic}[width=14cm, clip]{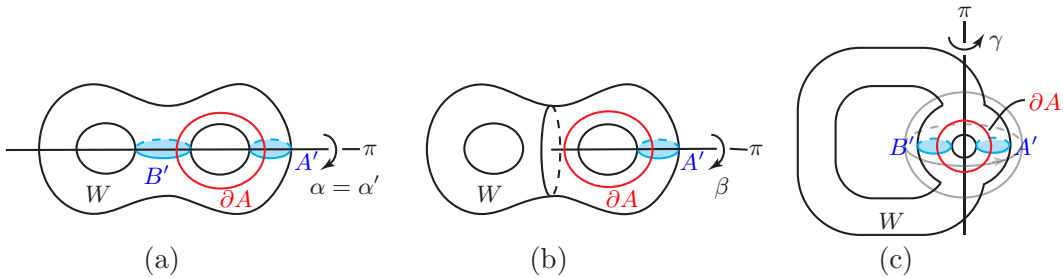}
  \linethickness{3pt}
  \put(52,0){(a)}
  \put(30,25){\footnotesize $W$}
  \put(52,32){\footnotesize {\color{blue}$B'$}}
  \put(81,23){\footnotesize {\color{red}$\partial A$}}
  \put(109,37){\footnotesize {\color{blue}$A'$}}
  \put(135,43){\footnotesize $\pi$}
  \put(115,28){\footnotesize $\alpha = \alpha'$}

  \put(198,0){(b)}
  \put(178,25){\footnotesize $W$}
  \put(227,23){\footnotesize {\color{red}$\partial A$}}
  \put(255,37){\footnotesize {\color{blue}$A'$}}
  \put(281,43){\footnotesize $\pi$}
  \put(268,28){\footnotesize $\beta$}

  \put(330,0){(c)}
  \put(330,15){\footnotesize $W$}
  \put(334,43){\footnotesize {\color{blue}$B'$}}
  \put(381,43){\footnotesize {\color{blue}$A'$}}
  \put(387,60){\footnotesize {\color{red}$\partial A$}}
  \put(360,95){\footnotesize $\pi$}
  \put(372,84){\footnotesize $\gamma$}
\end{overpic}
\captionof{figure}{Generators of $\mathcal{G}_{\{ A \}}$.}
\label{fig:isotropy_of_a_primitive_disk}
\end{center}

\begin{lemma}
\label{lem:stabilizers of vertices of TT}
\begin{enumerate}
\item
\label{item:stabilizers of vertices of TT (1)}
If $q^2 \equiv 1 \pmod p$, we have
$\mathcal{G}_{\{\mathcal{T}_1\}} =
\langle \alpha \mid \alpha^2 \rangle \oplus \langle \beta, \gamma,
\sigma_1, \sigma_2 \mid
{\gamma}^2, {\sigma_1}^2, {\sigma_2}^2 \rangle$.
\item
\label{item:stabilizers of vertices of TT (2)}
If $q^2 \not\equiv 1 \pmod p$, we have
$\mathcal{G}_{\{\mathcal{T}_1\}}
= \langle \alpha \mid \alpha^2 \rangle \oplus \langle \beta_1, \beta_2, \gamma_1, \gamma_2,
\sigma_1, \sigma_2 \mid
{\gamma_1}^2, {\gamma_2}^2, {\sigma_1}^2, {\sigma_2}^2 \rangle$.
\end{enumerate}
\end{lemma}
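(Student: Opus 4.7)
The plan is to apply Bass--Serre theory (Theorem \ref{thm:theorem by Brown}) to the action of $\mathcal{G}_{\{\mathcal{T}_1\}}$ on the tree $\mathcal{T}_1$. Since $\mathcal{G}$ setwise permutes the components of $\mathcal{P}(V)$, the $\mathcal{G}_{\{\mathcal{T}_1\}}$-orbits on vertices and edges of $\mathcal{T}_1$ coincide with the restrictions to $\mathcal{T}_1$ of the $\mathcal{G}$-orbits, which are enumerated in Lemma \ref{lem:number of orbits of primitive disks and pairs}. Before applying Theorem \ref{thm:theorem by Brown}, I pass to the first barycentric subdivision $\mathcal{T}_1'$ in order to rule out edge inversions.

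In Case (1), where $q^{2} \equiv 1 \pmod{p}$, there is a single $\mathcal{G}$-orbit of vertices and two $\mathcal{G}$-orbits of edges, represented by $\{E, C\}$ and $\{E, E_{1}\}$, each of which admits an inversion. I take as the fundamental subtree $\mathcal{L} \subset \mathcal{T}_1'$ the star centered at $E$ with two edges going to the barycenters $[E, C]$ and $[E, E_{1}]$. In Case (2), where $q^{2} \not\equiv 1 \pmod{p}$, there are two vertex orbits (represented by $E$ and $C$) and three edge orbits (represented by $\{E,C\}$, $\{E, E_{1}\}$, $\{C, C_{1}\}$); only the latter two admit an inversion, so after subdivision the edge through $[E,C]$ contributes two orbits of half-edges while each of the other two contributes one. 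I take $\mathcal{L}$ to be the path whose vertices in order are $[E, E_{1}]$, $E$, $[E,C]$, $C$, $[C, C_{1}]$. In both cases $\mathcal{L}$ contains exactly one representative from each orbit of vertices and edges of $\mathcal{T}_1'$.

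The isotropy subgroups along $\mathcal{L}$ are supplied by Lemma \ref{lem:stabilizer of a primitive disks and pairs}: for a primitive disk vertex $A$, the vertex group is $\mathcal{G}_{\{A\}}$; for a barycenter $[A, B]$, the vertex group is the setwise stabilizer $\mathcal{G}_{\{A \cup B\}}$; and for every edge of $\mathcal{L}$ the edge group is the pointwise stabilizer $\mathcal{G}_{\{A, B\}} = \langle \alpha \mid \alpha^{2}\rangle$. Since the hyperelliptic involution $\alpha$ is central in $\mathcal{G}$ and belongs to every vertex and edge stabilizer, the iterated amalgamated free product produced by Theorem \ref{thm:theorem by Brown} splits off $\langle \alpha \mid \alpha^{2}\rangle$ as a direct factor; the complementary factor is the free product of the quotients of the vertex groups modulo $\langle \alpha \rangle$, since every edge group becomes trivial in the quotient. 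A direct expansion of these quotients from Lemma \ref{lem:stabilizer of a primitive disks and pairs} yields exactly the presentations stated in (1) and (2).

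The main technical point will be the orbit bookkeeping on $\mathcal{T}_1'$: verifying that in Case (2) the edge class of $\{E, C\}$ genuinely contributes two orbit representatives in the subdivision, because its endpoints lie in distinct $\mathcal{G}$-orbits and so cannot be exchanged, while in Case (1) it contributes only one; and that in both cases the chosen $\mathcal{L}$ is connected and is a valid subtree. Once this combinatorics is settled, the amalgamation calculation is routine, using that the direct-product structure with the central factor $\langle \alpha \rangle$ persists through amalgamation over $\langle \alpha \rangle$.
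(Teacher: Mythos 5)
Your proposal is correct and follows essentially the same route as the paper: apply Bass--Serre theory to the $\mathcal{G}_{\{\mathcal{T}_1\}}$-action on (a subdivision of) $\mathcal{T}_1$, read off the quotient path from Lemma \ref{lem:number of orbits of primitive disks and pairs}, and plug in the stabilizers from Lemma \ref{lem:stabilizer of a primitive disks and pairs}. The only cosmetic difference is that in Case (2) you take the full barycentric subdivision (so your fundamental path has an extra middle vertex $[E,C]$ with stabilizer $\mathcal{G}_{\{E\cup C\}}=\langle\alpha\mid\alpha^2\rangle$, which collapses in the amalgamation), whereas the paper subdivides only the invertible edge orbits $\mathcal{G}\cdot\{E,E_1\}$ and $\mathcal{G}\cdot\{C,C_1\}$; both yield the same amalgamated product.
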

\begin{proof}
\noindent (\ref{item:stabilizers of vertices of TT (1)})
Suppose $q^2 \equiv 1 \pmod p$.
Since the argument is almost the same as Theorem 5.7 (2)-(c) in \cite{CK15b}, we explain the outline.
A local part of $\mathcal{T}_1$ containing vertices
$E$, $C$, $E_1$ and $C_1$ is illustrated
in Figure \ref{fig:primitive_disk_complex_3} (a).

\begin{center}
\begin{overpic}[width=14cm, clip]{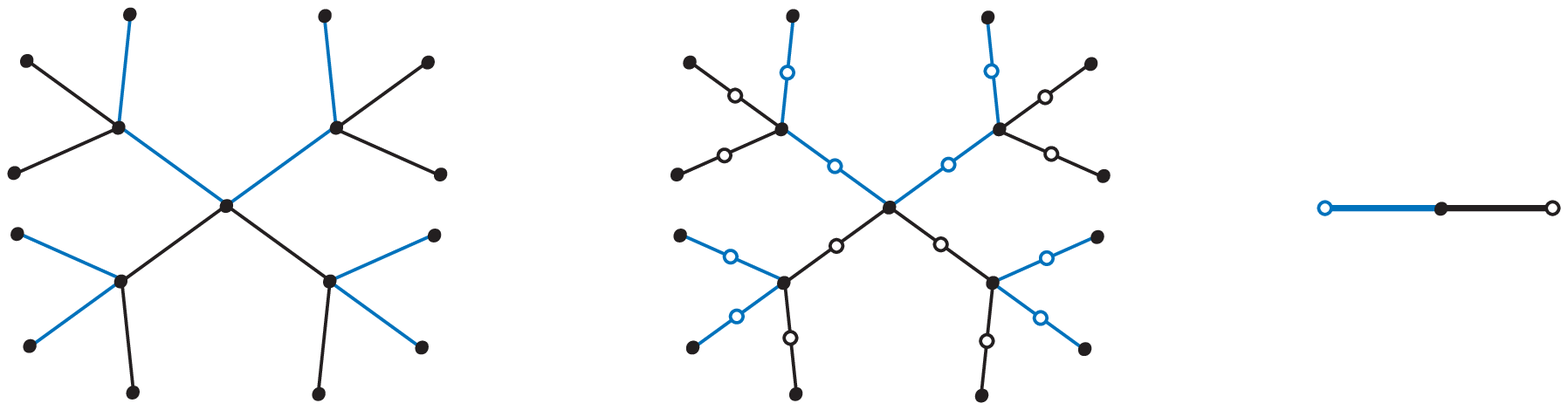}
  \linethickness{3pt}
  \put(56,0){(a)}
  \put(40,42){\small $E_1$}
  \put(-3,72){\small $C_1$}
  \put(69,64){\small $E = C_q$}
  \put(40,85){\small $C = E_q$}
  \put(65,45){\small $E_{p-1}$}
  \put(0,106){\small $C_{p-1}$}
  \put(42,111){\small $C_{p-q}$}
  \put(97,83){\small $E_{p-q}$}
  \put(218,0){(b)}
  \put(232,63){$E$}
  \put(203,85){$C$}
  \put(354,0){(c)}
  \put(315,52){$\{E, C\} $}
  \put(355,52){$E$}
  \put(373,52){$\{ E , E_1 \}$}
\end{overpic}
\captionof{figure}{(a) The tree component $\mathcal{T}_1$.
(b) The tree $\mathcal{T}_1'$. (c) The quotient
$\mathcal{T}_1' / \mathcal{G}_{\{\mathcal{T}_1\}}$.}
\label{fig:primitive_disk_complex_3}
\end{center}

Let $\mathcal{T}_1'$ be the first barycentric subdivision
of $\mathcal{T}_1$, which is described in Figure \ref{fig:primitive_disk_complex_3}  (b).

By Lemma \ref{lem:number of orbits of primitive disks and pairs},
the quotient of $\mathcal{T}_1'$ by the action of $\mathcal{G}_{\{\mathcal{T}_1\}}$
is the path graph on three vertices as illustrated
in Figure \ref{fig:primitive_disk_complex_3} (c).
By Theorem \ref{thm:theorem by Brown},
we can express $\mathcal{G}_{\{ \mathcal{T}_1 \}}$ as the following amalgamated free products:
\[ \mathcal{G}_{\{ \mathcal{T}_1\} }  =
\mathcal{G}_{\{ E \cup C  \}} *_{\mathcal{G}_{ \{ E, C \} }}
\mathcal{G}_{\{ E \}} *_{\mathcal{G}_{ \{ E , E_1 \} }}
\mathcal{G}_{\{ E \cup E_1 \}} . \]
By Lemma
\ref{lem:stabilizer of a primitive disks and pairs},
we obtain the required presentation.

\smallskip

\noindent (\ref{item:stabilizers of vertices of TT (2)})
Suppose $q^2 \not\equiv 1 \pmod p$.
In this case, the argument is almost the same as Theorem 5.7 (2)-(d) in \cite{CK15b}.
A local part of $\mathcal{T}_1$ containing vertices
$E$, $C$, $E_1$, and $C_1$ is illustrated
in Figure \ref{fig:primitive_disk_complex_4} (a).

\begin{center}
\begin{overpic}[width=14cm, clip]{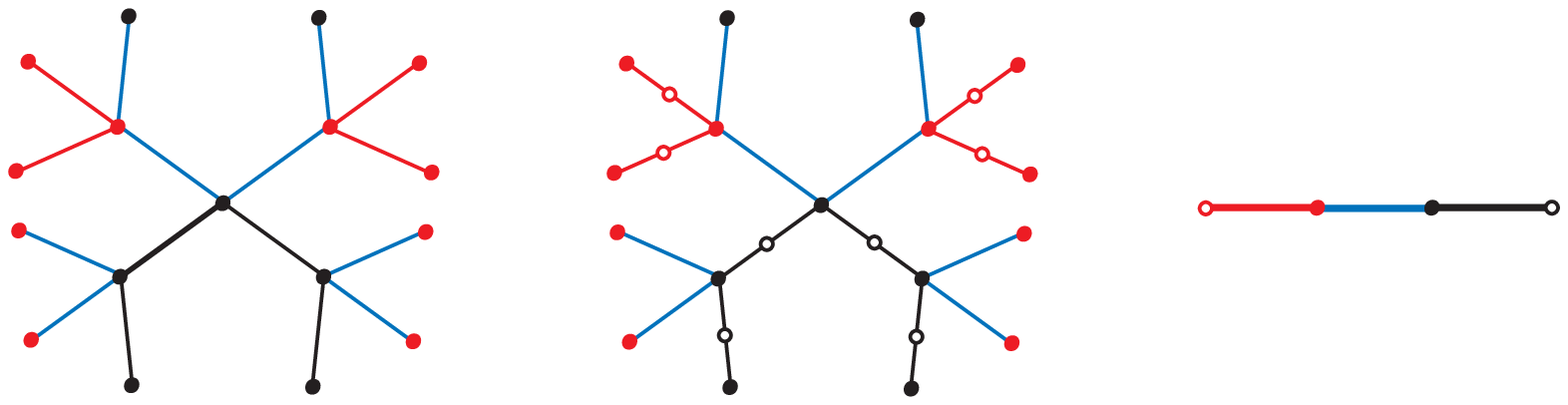}
  \linethickness{3pt}
  \put(53,0){(a)}
  \put(36,41){\small $E_1$}
  \put(-7,72){\small $C_1$}
  \put(64,63){\small $E = C_q$}
  \put(36,85){\small $C = E_{q'}$}
  \put(60,45){\small $E_{p-1}$}
  \put(-4,106){\small $C_{p-1}$}
  \put(38,111){\small $C_{p-q}$}
  \put(94,83){\small $E_{p-q'}$}
  \put(200,0){(b)}
  \put(213,62){$E$}
  \put(185,84){$C$}
  \put(339,0){(c)}
  \put(285,51){$\{ C , C_1 \}$}
  \put(326,51){$C$}
  \put(354,51){$E$}
  \put(374,51){$\{ E , E_1 \}$}
\end{overpic}
\captionof{figure}{(a) The tree component $\mathcal{T}_1$.
(b) The tree $\mathcal{T}_1'$.
(c) The quotient $\mathcal{T}_1' / \mathcal{G}_{\{ \mathcal{T}_1\} }$.}
\label{fig:primitive_disk_complex_4}
\end{center}

Let $\mathcal{T}_1'$ be the tree obtained from $\mathcal{T}_1$ by
adding the barycenter of each edge in
$\mathcal{G} \cdot \{ E, E_1 \} \cup \mathcal{G} \cdot \{ D, D_1 \}$).
See Figure \ref{fig:primitive_disk_complex_4} (b).

By Lemma \ref{lem:number of orbits of primitive disks and pairs},
the quotient of $\mathcal{T}_1'$ by the action of $\mathcal{G}_{\{ \mathcal{T}_1 \}}$
is the path graph on four vertices as illustrated in
Figure \ref{fig:primitive_disk_complex_4} (c).
By Theorem \ref{thm:theorem by Brown},
we can express $\mathcal{G}_{ \{ \mathcal{T}_1 \} }$ as the following amalgamated free products:
\[ \mathcal{G}_{\{\mathcal{T}_1\}} =
\mathcal{G}_{\{ D \cup D_1  \}} *_{\mathcal{G}_{ \{ D, D_1 \} }}
\mathcal{G}_{\{ D \}} *_{\mathcal{G}_{ \{ E , D \} }}
\mathcal{G}_{\{ E \}} *_{\mathcal{G}_{ \{ E , E_1 \} }}
\mathcal{G}_{\{ E \cup E_1 \}} . \]
By Lemma
\ref{lem:stabilizer of a primitive disks and pairs},
we obtain the required presentation.
\end{proof}

\begin{lemma}
\label{lem:stabilizers of edges of TT}
\begin{enumerate}
\item
\label{item:stabilizers of edges of TT (1)}
If $q^2 \equiv 1 \pmod p$, we have
$\mathcal{G}_{\{ \mathcal T_1 \cup \mathcal T_2 \}}
= \langle \alpha \mid \alpha^2 \rangle \oplus \langle \tau \mid {\tau}^2 \rangle$ and
$\mathcal{G}_{\{\mathcal T_1 , \mathcal T_2 \}} = \langle \alpha \mid \alpha^2 \rangle $,
where $\tau$ is an
element of $\mathcal{G}$ that exchanges $D$ and $E$.
\item
\label{item:stabilizers of edges of TT (2)}
If $q^2 \not\equiv 1 \pmod p$, we have
$\mathcal{G}_{\{ \mathcal T_1 \cup \mathcal T_2 \} } = \mathcal{G}_{\{\mathcal T_1 , \mathcal T_2 \}}
= \langle \alpha \mid \alpha^2 \rangle $.
\end{enumerate}
\end{lemma}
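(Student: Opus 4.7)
The plan is to reduce both parts to the unique bridge $\mathcal{B}_{\{D,E\}}$ connecting $\mathcal{T}_1$ and $\mathcal{T}_2$ guaranteed by Lemma~\ref{lem:uniqueness of a bridge}. Any $\varphi \in \mathcal{G}_{\{\mathcal{T}_1 \cup \mathcal{T}_2\}}$ permutes the tree components of $\mathcal{P}(V)$ and preserves the unordered pair $\{\mathcal{T}_1,\mathcal{T}_2\}$, hence preserves $\mathcal{B}_{\{D,E\}}$ setwise and the pair $\{D,E\}$ of its primitive endpoints. Such $\varphi$ either fixes $D$ and $E$ individually (then lies in $\mathcal{G}_{\{\mathcal{T}_1,\mathcal{T}_2\}}$) or swaps them, so my plan is first to compute $\mathcal{G}_{\{\mathcal{T}_1,\mathcal{T}_2\}}$ and then to decide when swaps occur.

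Suppose $\varphi \in \mathcal{G}_{\{\mathcal{T}_1,\mathcal{T}_2\}}$. Since $\Delta_1$ is the unique $2$-simplex of the bridge containing $E$ and $\Delta_n$ the unique one containing $D$, and consecutive $2$-simplices share an edge, an induction gives $\varphi(\Delta_k) = \Delta_k$ for every $k$. In particular $\varphi$ preserves the principal pair $\{E_m,E_{m+1}\}$; by Lemma~\ref{lem:principal pair of a bridge} the $(p,q)$-shell $\mathcal{S}_E$ is uniquely determined by this pair, so $\varphi(\mathcal{S}_E) = \mathcal{S}_E$. Since $E_0, E_p$ are the only semiprimitive members of the shell and the intersection numbers $|E_0 \cap E_j| = j-1$ pin down $E_j$ once $E_0$ is chosen, the only possible actions on the sequence $E_0,E_1,\ldots,E_p$ are the identity and the reversal $E_i \mapsto E_{p-i}$. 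The reversal forces $\{E_m,E_{m+1}\} = \{E_{p-m},E_{p-m-1}\}$, i.e.\ $p \in \{2m, 2m+1\}$, and the standing assumption $p = qm + r$ with $q \geq 4$ and $2 \leq r \leq q-2$ arithmetically excludes both equalities. Hence $\varphi$ fixes every $E_j$; in particular it fixes the primitive edge $\{E,E_1\}$, so $\varphi \in \langle \alpha \rangle$ by Lemma~\ref{lem:stabilizer of a primitive disks and pairs}(2). Therefore $\mathcal{G}_{\{\mathcal{T}_1,\mathcal{T}_2\}} = \langle \alpha \mid \alpha^2 \rangle$ in both cases.

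If $q^2 \not\equiv 1 \pmod p$, then $E$ is of type $(p,q)$ and $D$ of type $(p,q')$ with $q \neq q'$, and Lemma~\ref{lem:primitive disks in a bridge} precludes any element of $\mathcal{G}$ from exchanging them; hence no swap exists and $\mathcal{G}_{\{\mathcal{T}_1 \cup \mathcal{T}_2\}} = \langle \alpha \rangle$, proving (\ref{item:stabilizers of edges of TT (2)}). If $q^2 \equiv 1 \pmod p$, then $q = q'$ and both $E$ and $D$ are of $(p,q)$-type, removing the type obstruction. My plan is to exhibit an involution $\tau \in \mathcal{G}$ that swaps $D$ and $E$, exploiting the extra symmetry of $L(p,q)$ in this case. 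Writing $L(p,q) = S^3 / (\mathbb{Z}/p)$ for the action $(z_1,z_2) \mapsto (\omega z_1, \omega^q z_2)$ with $\omega = e^{2\pi i/p}$, the coordinate swap $(z_1, z_2) \mapsto (z_2, z_1)$ descends to an orientation-preserving involution of $L(p,q)$ precisely when $q^2 \equiv 1 \pmod p$, and its induced action on $H_1(L(p,q)) = \mathbb{Z}/p$ is multiplication by $q$, which sends $[l_E] = \pm 1$ to $\pm q = [l_D]$. I would isotope this involution so that it preserves the (essentially unique) genus-$2$ splitting $(V,W;\Sigma)$, yielding $\tau \in \mathcal{G}$ with $\tau(V_E) = V_D$, hence $\tau(E) = D$ and $\tau(D) = E$. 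Since $\tau^2 = 1$ and $\alpha$ is central, this gives $\mathcal{G}_{\{\mathcal{T}_1 \cup \mathcal{T}_2\}} = \langle \alpha \rangle \oplus \langle \tau \rangle$, proving (\ref{item:stabilizers of edges of TT (1)}).

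The main obstacle will be the construction of $\tau$ in the second case: an arbitrary $\mathcal{G}$-element sending $E$ to $D$ need not have order $2$, and the $\mathbb{Z}/2$-extension of $\langle \alpha \rangle$ recorded by the swap could a priori be cyclic of order four (some candidate $\tau$ might satisfy $\tau^2 = \alpha$ rather than $\tau^2 = 1$). The approach above bypasses this by producing $\tau$ as a geometric involution of $L(p,q)$ itself; the technical work lies in arranging this involution to preserve the Heegaard surface $\Sigma$ setwise and to swap the specific disks $D$ and $E$, rather than merely sending $E$ to some other primitive disk in $\mathcal{T}_2$.
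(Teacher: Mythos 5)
Your computation of $\mathcal{G}_{\{\mathcal{T}_1,\mathcal{T}_2\}}$ and your treatment of the case $q^2 \not\equiv 1 \pmod p$ essentially follow the paper's route: reduce to the unique bridge, use Lemma \ref{lem:principal pair of a bridge} to see that the stabilizer preserves the shell $\mathcal{S}_E$ vertexwise (the reversal being excluded arithmetically, which is the paper's remark that $m+1<p/2$), and then identify the stabilizer with $\langle\alpha\rangle$. Your shortcut via Lemma \ref{lem:stabilizer of a primitive disks and pairs}(2) applied to the primitive edge $\{E,E_1\}$ is legitimate and slightly cleaner than the paper's direct appeal to Alexander's trick, and Lemma \ref{lem:primitive disks in a bridge} kills the swap when $q^2\not\equiv 1$, exactly as in the paper.

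The genuine gap is in case (\ref{item:stabilizers of edges of TT (1)}), the construction of $\tau$. Your candidate is the coordinate-swap involution of $L(p,q)=\mathbb{S}^3/(\mathbb{Z}/p\mathbb{Z})$, which indeed descends precisely when $q^2\equiv 1\pmod p$ and acts on $H_1$ by multiplication by $q$. But that homological information only shows that $\tau(E)$ is a primitive disk of the same type as $D$; it does not give $\tau(E)=D$ as isotopy classes, and the step ``$\tau(V_E)=V_D$, hence $\tau(E)=D$'' is unjustified. Since $\mathcal{G}$ acts transitively on the infinitely many primitive disks in this case and infinitely many bridges emanate from $E$ (one for each shell $\mathcal{S}_E$), the requirement $\tau(\{D,E\})=\{D,E\}$ for the \emph{specific} disk $D$ at the far end of the \emph{specific} bridge $\mathcal{B}_{\{D,E\}}$ is the entire content of the statement, and composing your involution with a correcting element of $\mathcal{G}$ to hit $D$ destroys the property $\tau^2=\mathrm{id}$ that you were relying on to exclude $\tau^2=\alpha$. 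The paper closes exactly this gap combinatorially: Lemma \ref{lem:transitivity on shells} produces $\tau$ with $\tau(D)=E$ and $\tau(D_i)=E_i$ for the shell realizing the last $2$-simplex of the bridge; uniqueness of bridges (Lemmas \ref{lem:principal pair of a bridge}, \ref{lem:words for disks in a bridge} and \ref{lem:uniqueness of a bridge}) then forces $\tau(E)=D$, so $\tau\in\mathcal{G}_{\{\mathcal{T}_1\cup\mathcal{T}_2\}}$; and the order-four possibility $\tau^2=\alpha$ is ruled out by checking that $\tau|_D$ and $\tau|_E$ are both orientation-preserving with respect to orientations induced from the complementary balls cut off by $D\cup D_m\cup D_{m+1}$ and $E\cup E_m\cup E_{m+1}$, so that $\tau^2$ is orientation-preserving on $E$ and hence trivial by the first part of the proof. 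An argument of this kind is what your proposal is missing.
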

\begin{proof}
We first show that $\mathcal{G}_{\{\mathcal T_1 , \mathcal T_2 \}} = \langle \alpha \mid \alpha^2 \rangle $ in both cases.
Let $\varphi$ be an element of $\mathcal{G}_{\{\mathcal T_1 , \mathcal T_2 \}}$.
Since $\mathcal{B}_{ \{ D,E \} }$ is the unique bridge connecting $\mathcal{T}_1$ and $\mathcal{T}_2$,
$\varphi$ preserves $\mathcal{B}_{ \{ D,E \} }$, so $D$ and $E$.
By Lemma \ref{lem:principal pair of a bridge},
$\varphi$ preserves the shell $\mathcal{S}_{E}$.
Further, since $m + 1 < p/2$, we have $\varphi (E) = E$ and $\varphi(E_i) = E_i$
($i \in \{ 0 , 1 , \ldots, p \}$) by
Lemma \ref{lem:principal pair of a bridge}.
Let $E'$ be the unique dual disk of $E$ disjoint from $E_0$,
and let $E_0'$ be the unique semi-primitive disk disjoint from $E$.
It follows from the the uniqueness of the shell that we have
then $\varphi(E') = E'$ and $\varphi(E_0') = E_0'$.
Since
$\{E , E_{m}, E_{m+1}\}$ is a triple of pairwise
disjoint disks cutting $V$ into two 3-balls,
if $\varphi$ is orientation-preserving on $E$,
then so is on each of $E_j$, $E'$ and $E_0'$.
Then by Alexander's trick,
$\varphi$ is the trivial element of $\mathcal{G}$.
If $\varphi$ is orientation-reversing on $E$, then
so is on each of $E_j$, $E'$ and $E_0'$.
In this case, again by Alexander's trick,
$\varphi$ is the hyperelliptic involution $\alpha$.

In the remaining of the proof, we consider
the group $\mathcal{G}_{\{ \mathcal T_1 \cup \mathcal T_2 \} }$.
If $q^2 \not\equiv 1 \pmod p$,
there exists no element of $\mathcal{G}$ that
maps $E$ to $D$ by Lemma \ref{lem:primitive disks in a bridge}.
Thus, we have $\mathcal{G}_{\{ \mathcal T_1 \cup \mathcal T_2 \} } = \mathcal{G}_{\{ \mathcal T_1 , \mathcal T_2 \} }$,
which is our assertion.
If $q^2 \not\equiv 1 \pmod p$,
then by Lemmas \ref{lem:primitive disks in a bridge}
and \ref{lem:transitivity on shells}
there exists an element $\tau$ of $\mathcal{G}$ such that
$\tau(D) = E$ and $\varphi(D_i) = E_i$
($i \in \{ 0 , 1 , \ldots, p \}$).
By Lemmas \ref{lem:construction of a bridge},
\ref{lem:principal pair of a bridge} and
\ref{lem:uniqueness of a bridge},
we have $\tau(E) = D$ and $\varphi(E_i) = D_i$
($i \in \{ 0 , 1 , \ldots, p \}$).
Thus, $\tau$ is in $\mathcal{G}_{\{ \mathcal T_1 \cup \mathcal T_2 \} }$.
Now we give orientations on
$D$, $D_{m}$ and $D_{m+1}$
so that they come from an orientation of
a component $V'$ of $V$ cut off by
$D \cup D_{m} \cup D_{m+1}$.
We then can give orientations on
$E$, $E_{m}$ and $E_{m+1}$
so that they come from an orientation of
a component $\tau(V')$ of $V$ cut off by
$E \cup E_{m} \cup E_{m+1}$.
Under these orientations, both $\tau \mid_D : D \to E$ and
$\tau \mid_E : E \to D$ are oreintation-preserving.
This implies that $\tau^2 = 1 \in \mathcal{G}$.
Therefore, we have $\mathcal{G}_{ \{ \mathcal{T}_1 \cup \mathcal{T}_2 \} } =
\langle \alpha \mid \alpha^2 \rangle \oplus
\langle \tau \mid \tau^2 \rangle$.
\end{proof}

\begin{theorem}
\label{thm:presentations of the Goeritz groups for non-connected case}
Let $\mathcal G$ be the genus-$2$ Goeritz group of $L(p,q)$ with $p \not\equiv \pm 1 \pmod q$.
\begin{enumerate}
\item
\label{item:presentations of the Goeritz groups for non-connected case (1)}
If $q^2 \equiv 1 \pmod p$, the group $\mathcal{G}$ can be expressed as the amalgamated free product
$\mathcal{G}_{\{\mathcal{T}_1\}} *_{\mathcal{G}_{\{\mathcal T_1 , \mathcal T_2 \}}}
\mathcal{G}_{\{\mathcal T_1 \cup \mathcal T_2 \}}$
and it has the following presentation:
\[\langle \alpha \mid \alpha^2 \rangle \oplus \langle \beta, \gamma,
\sigma_1, \sigma_2 , \tau \mid
{\gamma}^2, {\sigma_1}^2, {\sigma_2}^2, \tau^2 \rangle.\]
\item
\label{item:presentations of the Goeritz groups for non-connected case (2)}
If $q^2 \not\equiv 1 \pmod p$, the group $\mathcal{G}$ can be expressed as the HNN-extension
$\mathcal{G}_{\{ \mathcal{T}_1 \} } *_{\mathcal{G}_{\{ \mathcal T_1 , \mathcal T_2 \} }}$
and it has the following presentation:
\[\langle \alpha \mid \alpha^2 \rangle \oplus \langle \beta_1, \beta_2, \gamma_1, \gamma_2,
\sigma_1, \sigma_2 , \upsilon \mid
{\gamma_1}^2, {\gamma_2}^2, {\sigma_1}^2, {\sigma_2}^2 \rangle .\]
\end{enumerate}
\end{theorem}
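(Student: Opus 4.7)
The plan is to apply Bass--Serre theory (Theorem \ref{thm:theorem by Brown}) directly to the simplicial action of $\mathcal{G}$ on the tree $\TT(V)$. By Lemma \ref{lem:transitivity on the vertices of TT}, this action is transitive both on vertices and on edges, so a fundamental domain consists of a single edge together with either one or two vertices, according to whether some element of $\mathcal{G}$ inverts an edge. This dichotomy corresponds exactly to whether the resulting structure is an HNN extension or an amalgamated free product, and it splits into the two cases of the theorem.

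For part (\ref{item:presentations of the Goeritz groups for non-connected case (1)}), where $q^2 \equiv 1 \pmod p$, I would first invoke Lemma \ref{lem:stabilizers of edges of TT} (\ref{item:stabilizers of edges of TT (1)}) to produce the element $\tau \in \mathcal{G}$ that preserves the bridge $\mathcal{B}_{\{D,E\}}$ while exchanging $D$ and $E$; this witnesses an edge--inversion on $\TT(V)$. To apply Theorem \ref{thm:theorem by Brown} I would then pass to the first barycentric subdivision $(\TT(V))'$, on which no edge is inverted. Its vertices form exactly two $\mathcal{G}$--orbits --- original vertices of $\TT(V)$ (all equivalent to $\mathcal{T}_1$) and midpoints of edges (all equivalent to the midpoint of $\{\mathcal{T}_1,\mathcal{T}_2\}$) --- so the quotient $(\TT(V))' / \mathcal{G}$ is a single edge with two distinct vertices. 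Bass--Serre then expresses $\mathcal{G}$ as the amalgamated free product $\mathcal{G}_{\{\mathcal{T}_1\}} *_{\mathcal{G}_{\{\mathcal{T}_1,\mathcal{T}_2\}}} \mathcal{G}_{\{\mathcal{T}_1 \cup \mathcal{T}_2\}}$, and substituting the presentations from Lemmas \ref{lem:stabilizers of vertices of TT} (\ref{item:stabilizers of vertices of TT (1)}) and \ref{lem:stabilizers of edges of TT} (\ref{item:stabilizers of edges of TT (1)}), with the common $\langle \alpha \mid \alpha^2\rangle$ direct summand identified across the factors, yields the claimed presentation.

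For part (\ref{item:presentations of the Goeritz groups for non-connected case (2)}), where $q^2 \not\equiv 1 \pmod p$, I would first note via Lemma \ref{lem:primitive disks in a bridge} that the two endpoints of $\mathcal{B}_{\{D,E\}}$ have distinct types $(p,q)$ and $(p,q')$, hence no element of $\mathcal{G}$ can exchange $D$ and $E$; in particular $\mathcal{G}$ inverts no edge of $\TT(V)$ and $\mathcal{G}_{\{\mathcal{T}_1 \cup \mathcal{T}_2\}} = \mathcal{G}_{\{\mathcal{T}_1,\mathcal{T}_2\}}$. Since the action on both vertices and edges is transitive, the quotient of $\TT(V)$ by $\mathcal{G}$ is a single vertex carrying a single loop, and Theorem \ref{thm:theorem by Brown} delivers the HNN extension $\mathcal{G}_{\{\mathcal{T}_1\}} *_{\mathcal{G}_{\{\mathcal{T}_1,\mathcal{T}_2\}}}$. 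The stable letter $\upsilon$ can be chosen as any element of $\mathcal{G}$ sending $\mathcal{T}_2$ to $\mathcal{T}_1$, which exists by Lemma \ref{lem:transitivity on the vertices of TT}; the resulting HNN relation $\upsilon \alpha \upsilon^{-1} = \alpha$ is automatic because $\alpha$ (the hyperelliptic involution) is central in $\mathcal{G}$. Inserting the presentation of $\mathcal{G}_{\{\mathcal{T}_1\}}$ from Lemma \ref{lem:stabilizers of vertices of TT} (\ref{item:stabilizers of vertices of TT (2)}) produces the stated presentation.

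The expected main obstacle is not the Bass--Serre step itself, which is formal once the setup is in place, but rather the bookkeeping around the generator $\alpha$: I must verify that the hyperelliptic involution is identified consistently across the vertex and edge stabilizer presentations so that the $\langle \alpha \mid \alpha^2\rangle$ factor genuinely splits off as a direct summand in the final group, and in Case (\ref{item:presentations of the Goeritz groups for non-connected case (2)}) that the conjugation action of $\upsilon$ on the edge subgroup $\langle \alpha \rangle$ is trivial so that no nontrivial HNN identification is hidden. Once these compatibility checks are performed, both presentations drop out directly from Theorem \ref{thm:theorem by Brown}.
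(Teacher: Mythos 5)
Your proposal is correct and follows essentially the same route as the paper: barycentric subdivision of $\mathcal{T}^{\mathcal{T}}(V)$ to remove the edge inversion by $\tau$ in the case $q^2 \equiv 1 \pmod p$, the direct quotient (a loop) giving an HNN extension when $q^2 \not\equiv 1 \pmod p$, and substitution of the stabilizer presentations from Lemmas \ref{lem:stabilizers of vertices of TT} and \ref{lem:stabilizers of edges of TT}. Your explicit remarks on the non-inversion of edges via Lemma \ref{lem:primitive disks in a bridge} and on the triviality of the conjugation action of $\upsilon$ on the edge group $\langle \alpha \rangle$ make precise two points the paper treats implicitly, but the argument is the same.
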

\begin{proof}
\noindent (\ref{item:presentations of the Goeritz groups for non-connected case (1)})
Let $(\TT)'$ be the first barycentric subdivision
of $\TT$.
By Lemmas \ref{lem:transitivity on the vertices of TT} and \ref{lem:stabilizers of edges of TT},
the Goeritz group $\mathcal{G}$ acts on the set of edges of $(\TT)'$
transitively, and each edge is invertible under the action of $\mathcal{G}$.
Thus,
the quotient of $(\TT)'$ by the action of $\mathcal{G}$
is a single edge with two vertices.
By Theorem \ref{thm:theorem by Brown},
we can express $\mathcal{G}$ as the following amalgamated free product:
\[ \mathcal{G} =
\mathcal{G}_{\{ \mathcal{T}_1 \}} *_{\mathcal{G}_{ \{ \mathcal{T}_1 , \mathcal{T}_2 \} }}
\mathcal{G}_{\{ \mathcal{T}_1 \cup \mathcal{T}_2 \}} . \]
By Lemmas
\ref{lem:stabilizers of vertices of TT} and \ref{lem:stabilizers of edges of TT}
we obtain the required presentation.

\noindent (\ref{item:presentations of the Goeritz groups for non-connected case (2)})
By Lemmas \ref{lem:transitivity on the vertices of TT} and \ref{lem:stabilizers of edges of TT},
the Goeritz group $\mathcal{G}$ acts on the sets of vertices and edges of $\TT$
transitively without inverting edges.
Thus, the quotient of $\TT$ by the action of $\mathcal{G}$
is a single edge with one vertex.
By Theorem \ref{thm:theorem by Brown},
we can express $\mathcal{G}$ as the following HNN-extension:
\[ \mathcal{G} =
\mathcal{G}_{\{ \mathcal{T}_1 \} } *_{\mathcal{G}_{\{ \mathcal T_1 , \mathcal T_2 \}}}. \]
By Lemmas
\ref{lem:stabilizers of vertices of TT} and \ref{lem:stabilizers of edges of TT}
we obtain the required presentation.
\end{proof}

\section{The space of Heegaard splittings}
\label{sec:The space of Heegaard splittings}

Let $M$ be a closed orientable $3$-manifold and $\Sigma$ be a Heegaard surface in $M$.
The space $\mathcal{H}(M, \Sigma)$ of Heegaard splittings equivalent to $(M, \Sigma)$
is defined to be the space of left cosets $\Diff(M)/\Diff(M,\Sigma)$,
where $\Diff(M)$ denotes the diffeomorphism group of $M$, and $\Diff(M,\Sigma)$ denotes
the subgroup of $\Diff(M)$ consisting of diffeomorphisms that preserve $\Sigma$ setwise.
The space $\mathcal{H}(M, \Sigma)$ can be regarded as the space of images of $\Sigma$ under diffeomorphisms of $M$.
Note that $\pi_0 (\mathcal{H}(M, \Sigma))$ is exactly the set of
Heegaard splittings equivalent to $(M, \Sigma)$.
By $\MCG(M)$ and $\MCG(M, \Sigma)$, we denote the groups of path components of
$\Diff(M)$ and $\Diff(M, \Sigma)$ respectively.

In \cite{JM13}	, Johnson and McCullough showed the following:
\begin{theorem}[\cite{JM13}]
\label{thm:Theorem of Johnson and McCullough}
Let $\Sigma$ be a genus-$2$ Heegaard surface of $M$.
\begin{enumerate}
\item
For each integer $q$ greater than $1$, the natural map $\pi_q(\Diff (M)) \to \pi_q (\Diff(M, \Sigma))$ is an isomorphism.
\item
There exists the following short exact sequence:
\[
1 \to \pi_1 (\Diff (M)) \to \pi_1 (\mathcal{H}(M, \Sigma)) \to G(M, \Sigma) \to 1,
\]
where $G(M, \Sigma)$ denotes the kernel of the natural map
$\MCG (M, \Sigma) \to \MCG(M)$.
\end{enumerate}
\end{theorem}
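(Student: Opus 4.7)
The plan is to derive both parts from the long exact sequence of the principal bundle
$$\Diff(M,\Sigma) \hookrightarrow \Diff(M) \xrightarrow{p} \mathcal{H}(M,\Sigma) = \Diff(M)/\Diff(M,\Sigma).$$
First I would check that $p$ is a locally trivial fibration. Since $\Diff(M,\Sigma)$ is a closed subgroup of the Fr\'echet Lie group $\Diff(M)$, this amounts to producing local sections of $p$: given a diffeomorphism $\phi_0$, every surface sufficiently $C^\infty$-close to $\phi_0(\Sigma)$ can be realized as $\phi(\phi_0(\Sigma))$ for a canonically chosen $\phi$ supported in a tubular neighborhood, via Cerf's normal-exponential construction. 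This makes $p$ a principal $\Diff(M,\Sigma)$-bundle.

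The fibration then yields the long exact sequence
$$\cdots \to \pi_q(\Diff(M,\Sigma)) \to \pi_q(\Diff(M)) \to \pi_q(\mathcal{H}(M,\Sigma)) \to \pi_{q-1}(\Diff(M,\Sigma)) \to \cdots$$
terminating in $\pi_0(\Diff(M,\Sigma)) \to \pi_0(\Diff(M))$, i.e.\ $\MCG(M,\Sigma) \to \MCG(M)$, whose kernel is $G(M,\Sigma)$ by definition. Both conclusions of the theorem then reduce to the single assertion that $\mathcal{H}(M,\Sigma)$ is \emph{aspherical}, that is, $\pi_q(\mathcal{H}(M,\Sigma)) = 0$ for every $q \geq 2$. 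Indeed, asphericity at once turns the higher portion of the sequence into the isomorphisms $\pi_q(\Diff(M)) \xrightarrow{\cong} \pi_q(\Diff(M,\Sigma))$ of (1), and truncates the low-dimensional tail to precisely the short exact sequence
$$1 \to \pi_1(\Diff(M)) \to \pi_1(\mathcal{H}(M,\Sigma)) \to G(M,\Sigma) \to 1$$
of (2).

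The hard part, therefore, is proving asphericity of $\mathcal{H}(M,\Sigma)$, and this is where the genus-two hypothesis must be used. My strategy would be to rigidify Heegaard surfaces by extra structure that kills their isotopy stabilizers: consider the space $\widetilde{\mathcal{H}}$ of Heegaard surfaces $\Sigma'$ equipped with, say, a marking by a pants decomposition together with a labelled basepoint system. The forgetful map $\widetilde{\mathcal{H}} \to \mathcal{H}(M,\Sigma)$ is a covering with discrete fiber, so it suffices to prove $\widetilde{\mathcal{H}}$ is aspherical, and in fact one expects contractibility. The technical heart is a Cerf-style sweepout argument showing that any continuous family $S^q \to \widetilde{\mathcal{H}}$ of marked Heegaard configurations extends over $D^{q+1}$; in genus two this can be attempted by induction on complexity of intersection with a fixed reference splitting, invoking Waldhausen-type isotopy uniqueness of the splitting surface. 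Everything else in the proof is formal long-exact-sequence bookkeeping once this asphericity is secured.
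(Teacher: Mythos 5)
This statement is quoted from \cite{JM13} and the paper offers no proof of its own, so the relevant comparison is with the argument in that reference. Your starting point --- the fibration $\Diff(M,\Sigma)\hookrightarrow\Diff(M)\to\mathcal{H}(M,\Sigma)$ and its long exact sequence --- is correct and is indeed how Johnson--McCullough proceed. But your reduction of both conclusions to asphericity of $\mathcal{H}(M,\Sigma)$ fails, in two ways. First, asphericity of the \emph{base} does not yield the short exact sequence in (2): from $\pi_2(\mathcal{H}(M,\Sigma))=0$ the exact sequence only gives
\[
1 \to \pi_1(\Diff(M))\big/\,\mathrm{im}\bigl(\pi_1(\Diff(M,\Sigma))\to\pi_1(\Diff(M))\bigr) \to \pi_1(\mathcal{H}(M,\Sigma)) \to G(M,\Sigma)\to 1,
\]
so injectivity of $\pi_1(\Diff(M))\to\pi_1(\mathcal{H}(M,\Sigma))$ requires the image of $\pi_1$ of the \emph{fiber} to be trivial; worse, $\pi_2(\mathcal{H}(M,\Sigma))=0$ forces $\pi_1(\Diff(M,\Sigma))\to\pi_1(\Diff(M))$ to be injective, so you would in addition need $\pi_1(\Diff(M,\Sigma))=1$, which you never address. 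Second, and decisively, $\mathcal{H}(M,\Sigma)$ is \emph{not} aspherical in general: once one knows the components of $\Diff(\mathbb{S}^3,\Sigma)$ are contractible, the long exact sequence together with Hatcher's $\Diff(\mathbb{S}^3)\simeq O(4)$ gives $\pi_3(\mathcal{H}(\mathbb{S}^3,\Sigma))\cong\pi_3(O(4))\cong\Integer\oplus\Integer\neq 0$. So the ``technical heart'' you propose to supply (the marking/sweepout argument) is an attempt to prove a false statement; moreover the covering-space step is unjustified, since the stabilizer of a Heegaard surface is the non-discrete group $\Diff(M,\Sigma)$, and Waldhausen-type uniqueness controls only $\pi_0$, not higher parametrized families.

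The key input actually used in \cite{JM13} concerns the fiber rather than the base: for genus at least $2$ the components of $\Diff(M,\Sigma)$ are contractible. This follows from the restriction fibration $\Diff(M,\Sigma)\to\Diff(\Sigma)$, whose fiber $\Diff(V\ \rel\ \partial V)\times\Diff(W\ \rel\ \partial W)$ is contractible by Hatcher's theorems on diffeomorphism groups of handlebodies rel boundary, and whose image is a union of components of $\Diff(\Sigma)$, each contractible by Earle--Eells --- this is precisely where the genus hypothesis enters. With $\pi_q(\Diff(M,\Sigma))=0$ for all $q\geq 1$, the long exact sequence immediately yields $\pi_q(\Diff(M))\cong\pi_q(\mathcal{H}(M,\Sigma))$ for $q\geq 2$ (note that statement (1) as printed, with target $\pi_q(\Diff(M,\Sigma))$, appears to be a misprint for $\pi_q(\mathcal{H}(M,\Sigma))$) together with the short exact sequence of (2). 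The one piece of your outline that survives is the verification that the quotient map $\Diff(M)\to\mathcal{H}(M,\Sigma)$ is a fibration.
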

By Theorem \ref{thm:Theorem of Johnson and McCullough} (1),
for any $q \geq 2$,
the study of $\pi_q (\Diff(M, \Sigma))$ is nothing else but
that of $\pi_q (\Diff (M))$.
The following is a direct application of our result on the finite presentability of
the Goeritz groups of genus-$2$ Heegaard splittings of lens spaces.

\begin{theorem}
\label{cor:the space of Heegaard splittings of a lens space}
Let $L$ be $\mathbb{S}^3$ or a lens space $L(p,q)$, where $1 \leq q \leq p/2$.
For a genus-$2$ Heegaard surface in $L$,
$\pi_1 (\mathcal{H} (L))$ is finitely presented $($up to the Smale Conjecture for $L(2,1)$ when $L = L(2,1))$.
\end{theorem}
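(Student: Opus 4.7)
The plan is to exploit the short exact sequence
\[1 \to \pi_1(\Diff(L)) \to \pi_1(\mathcal{H}(L,\Sigma)) \to G(L,\Sigma) \to 1\]
supplied by Theorem \ref{thm:Theorem of Johnson and McCullough}(2), and to verify that both the kernel $\pi_1(\Diff(L))$ and the quotient $G(L,\Sigma)$ are finitely presented. Since an extension of a finitely presented group by a finitely presented group is again finitely presented, this will suffice.

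For the kernel, I would invoke the Smale Conjecture for $L$: Hatcher settled the case $L=\mathbb{S}^3$, and the conjecture is known for all lens spaces $L(p,q)$ with $(p,q)\neq (2,1)$ by the work of Ivanov, Hong--Kalliongis--McCullough--Rubinstein and others. This is precisely the source of the caveat for $L(2,1)$ in the statement. The Smale Conjecture yields a weak homotopy equivalence $\Diff(L)\simeq \Isom(L)$, and since $\Isom(L)$ is a compact Lie group, $\pi_1(\Diff(L))\cong \pi_1(\Isom(L))$ is a finitely generated abelian group, hence finitely presented.

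For the quotient, $G(L,\Sigma)$ is by definition the kernel of the natural map $\MCG(L,\Sigma)\to \MCG(L)$, and it is well known that $\MCG(L)$ is finite when $L$ is $\mathbb{S}^3$ or any lens space. Thus $G(L,\Sigma)$ is a subgroup of finite index in $\MCG(L,\Sigma)$. Theorem \ref{thm:presentations of the Goeritz groups for non-connected case}, combined with the previously established presentations for $\mathbb{S}^3$ and for lens spaces $L(p,q)$ with $p\equiv \pm 1\pmod q$, shows that $\MCG(L,\Sigma)$ is finitely presented in every case. Since any finite-index subgroup of a finitely presented group is itself finitely presented (by Reidemeister--Schreier), $G(L,\Sigma)$ is finitely presented, completing the verification.

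There is no genuine obstacle in the argument: the substantive input is the finite presentability of the Goeritz group, which is the main theorem of the paper, and the remaining ingredients (the Smale Conjecture, finiteness of $\MCG(L)$, closure of finite presentability under group extensions and under passage to finite-index subgroups) are all classical. The only hypothesis left unverified is the Smale Conjecture for $\mathbb{RP}^3=L(2,1)$, which is exactly why that case is singled out in the statement.
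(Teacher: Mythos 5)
Your argument is correct and is essentially the paper's own proof: both rest on the Johnson--McCullough exact sequence, the finiteness of $\MCG(L)$ (Bonahon), the Smale Conjecture to control $\pi_1(\Diff(L))$, and the finite presentability of the Goeritz group from Theorem \ref{thm:presentations of the Goeritz groups for non-connected case} together with the earlier cases. The only cosmetic difference is that the paper lists $\pi_1(\Diff(L))$ explicitly case by case while you appeal to $\pi_1$ of a compact Lie group being finitely generated abelian, and you make explicit the finite-index/extension bookkeeping relating $G(L,\Sigma)$, $\MCG(L,\Sigma)$ and the Goeritz group that the paper compresses into the phrase ``up to finite extensions.''
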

\begin{proof}
Since the group $\MCG(L)$ is finite for any $L$ by Bonahon \cite{Bon83},
the group $G(L, \Sigma)$ is isomorphic to
the genus-$2$ Goeritz group of $L$ up to finite extensions.
By the Smale Conjecture, proved to be correct for all $L$ except $L(2,1)$ in \cite{Hat83} and \cite{HKMR12},
$\pi_1 (\Diff (L)) = \Integer / 2 \Integer$ if $L = \mathbb{S}^3$,
$\pi_1 (\Diff (L)) = (\Integer / 2 \Integer) \oplus (\Integer / 2 \Integer)$ if $L = L(2,1)$,
$\pi_1 (\Diff (L)) = \Integer$ if $L = L(p,1)$ (for odd $p \geq 3$),
$\pi_1 (\Diff (L)) = \Integer \oplus (\Integer / 2 \Integer)$ if $L = L(p,1)$ (for even $p \geq 3$), and
$\pi_1 (\Diff (L)) = \Integer \oplus \Integer$ otherwise.
Thus, in particular, $\pi_1 (\Diff (L))$ is finitely presented for any $L$.
Now, the assertion follows from the short exact sequence in Theorem \ref{thm:Theorem of Johnson and McCullough} (2).
\end{proof}

\smallskip
\noindent {\bf Acknowledgments.}
Part of this work was carried out while the authors were visiting
National Institute for Mathematical Sciences (NIMS) in Daejeon
for the Research in CAMP Program (C-21601) and Korea Institute for Advanced Study (KIAS) in Seoul.
They are grateful to the institutes and their staff for the warm hospitality.

\end{document}